\setlist[itemize]{label=\textbullet}
\theoremstyle{plain}
\newtheorem{thm}{Theorem}
\numberwithin{thm}{section}
\newtheorem{prop}[thm]{Proposition}
\newtheorem{cor}[thm]{Corollary}
\newtheorem{lem}[thm]{Lemma}
\newtheorem{defi}[thm]{Definition}
\newtheorem{propdef}[thm]{Proposition-Definition}
\newtheorem{ex}[thm]{Example}
\newtheorem{rmk}[thm]{Remark}
\newtheorem*{thm*}{Theorem}
\newtheorem*{conj*}{Conjecture}
\newtheorem*{prop*}{Proposition}
\newtheorem*{ex*}{Example}
\tikzset{
    labl/.style={anchor=south, rotate=90, inner sep=.5mm}
}
\newcommand{\Sym}{\mathfrak{S}}
\newcommand{\N}{\mathbb N}
\newcommand{\C}{\mathbb  C}
\newcommand{\Z}{\mathbb Z}
\newcommand{\R}{\mathbb R}
\newcommand{\Tr}{\text{Tr}}
\tikzset{
    labl/.style={anchor=south, rotate=90, inner sep=.5mm}
}
\def\subsection{\@startsection{subsection}{2}%
  \z@{.5\linespacing\@plus.7\linespacing}{.3\linespacing}%
  {\normalfont\bfseries}}
\title[Hecke algebras for set-theoretical solutions to the YBE]{Hecke algebras for set-theoretical solutions to the Yang--Baxter equation}
\author{Edouard Feingesicht}
\address{Normandie Univ, UNICAEN, CNRS, LMNO, 14000 Caen, France}
\email{edouard.feingesicht@unicaen.fr}
\subjclass[2020]{16T25, 20N02, 20C08}
\keywords{Yang--Baxter equation, Hecke algebra, Cycle set, Brace}
\begin{document}
\begin{abstract}We define a concept of Hecke algebra for structure groups of set-theoretical solutions to the Yang--Baxter equation. As a comparison to Artin--Tits groups of spherical type, we study some properties of this construction, while also highlighting some differences that appear, which shows a difference between finite Coxeter groups and the "Coxeter-like" group introduced by Dehornoy. We also relate this definition to known constructions on solutions (retractions). Finally, we study a particular case related to Torus Knot groups and Complex Reflexion groups.
\end{abstract}
\maketitle
\setcounter{tocdepth}{1}
\section*{Introduction}\label{intro}
The study of involutive non-degenerate set-theoretical solutions to the Yang--Baxter equation (\cite{drinfeld,etingof}) involves many different algebraic structures: cycle sets (\cite{rump}), braces (\cite{brace}), I-structures (\cite{istruct}), etc. The structure group of a solution (\cite{etingof}), was shown to be a Garside group by Chouraqui (\cite{chouraqui}). In \cite{rcc}, Dehornoy constructed a finite quotient of the structure group, which plays a role similar to finite Coxeter groups for Artin--Tits groups of spherical type (\cite{bourbaki}). The construction of this finite quotient involves the existence of a positive integer associated to each solution, usually denoted $d$ and which we call Dehornoy's class. In this article, we are interested in this finite "Coxeter-like" group, with the aim to understand both how it is similar and how it differs from finite Coxeter groups. 

For an Artin--Tits group of spherical type $A$ generated by $S$ and with Coxeter group $W$, the generic Iwahori--Hecke algebra can be defined as a quotient of the group ring $\Z[q^{\pm 1}][A]$ by the relations $s^2=(q-1)s+q$ for all $s$ in $S$. This algebra has numerous interesting properties: it has dimension $|W|$, the generators are invertible, it is semi-simple under a suitable extension, etc. (\cite{bourbaki,geck}). Here, we develop a theory of Hecke algebra for structure group of involutive non-degenerate set-theoretical solutions to the Yang--Baxter equation. We show that this algebra satisfies properties similar to the ones of Iwahori--Hecke algebras of Artin--Tits groups of spherical type, while also highlighting how and why the definition differs. To do so, we will rely on the fact that the structure group of a solution is a brace (\cite{rump07,brace}), which in particular means that there exists an abelian structure on the structure group $G$ such that $(G,+)\cong (\Z^{X},+)$, and we denote by $x^{[k]}$ the element corresponding to $x+\cdots+x$ in $\Z^X$. We summarize our results from each section in the following: 
\begin{thm*}
Let $(X,r)$ be an involutive non-degenerate set-theoretical solution to the Yang--Baxter equation of size $n$ and Dehornoy's class $d$. Denote $G$ the structure group of $(X,r)$ and $\overline G_2=G/\langle x^{[2d]}\rangle_{x\in X}$ its germ associated to $2d$ (two times Dehornoy's class). For any integral domain $R$, define the following $R[q^{\pm 1}]$-algebra:

$$\mathcal H=\raisebox{.33em}{$R[q^{\pm 1}][G]$}\!\!\left/\middle\langle \left(x^{[d]}\right)^2=(q-1)\cdot x^{[d]}+q, \forall x\in X\right\rangle.$$
Then the followings hold:
\begin{itemize}
\item (Theorem \ref{hecke-dim}) $\mathcal H$ is a free $R[q^{\pm 1}]$-module with basis indexed by $\overline G_2$. In particular, $\mathcal H$ has rank $(2d)^n$.
\item (Corollary \ref{cor:inverse_hecke}) If $T_g$ denotes the generator of $\mathcal H$ associated to an element $g$ of $\overline G_2$, then $T_g$ is invertible.
\item (Theorem \ref{thm:invol}) The anti-involution $R[q^{\pm 1}]\to R[q^{\pm 1}]$ that sends $q$ to $q^{-1}$ extends to a well-defined anti-involution of $\mathcal H$ that sends $T_g$ to $T_g^{-1}$ for any $g\in\overline G_2$.
\item (Corollary \ref{cor:tits_C}) If $R=\C$ then $\C(q)\otimes\mathcal H$ is semi-simple, and there is bijection between the irreducible characters of $\C(q)\otimes \mathcal H$ and the irreducible characters of $\C[\overline G_2]$.
\end{itemize}
\end{thm*}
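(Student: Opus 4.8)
We outline how each of the four items is established in the body of the paper, in the order in which they are proved.

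\emph{Rank and basis (Theorem \ref{hecke-dim}).} This is the core result, and the plan is to mimic the proof that the Iwahori--Hecke algebra of an Artin--Tits group of spherical type has rank $|W|$, with the Garside/brace combinatorics of Dehornoy's germ $\overline G_2$ replacing the Coxeter combinatorics of $W$. For each $g\in\overline G_2$ one fixes a lift $\tilde g\in G$ — say one of shortest Garside length in its coset — and sets $T_g$ to be the image of $\tilde g$ in $\mathcal H$. For the \emph{upper bound} one shows that the $(2d)^n$ elements $\{T_g\}_{g\in\overline G_2}$ span $\mathcal H$ over $R[q^{\pm1}]$: starting from $w\in G$, one uses the additive structure $(G,+)\cong(\Z^X,+)$ to express $w$ in terms of the elements $x^{[k]}$, and then the defining relation $(x^{[d]})^2=(q-1)\,x^{[d]}+q$ — which turns any power $(x^{[d]})^m$ into an $R[q^{\pm1}]$-combination of $1$ and $x^{[d]}$, hence lets one reduce the multiplicity of each $x$ modulo $2d$ — together with the multiplicative relations of $G$, to rewrite $T_w$ over the fixed transversal. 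Here one uses the compatibility, coming from Dehornoy's theory of germs, between the multiplicative square $(x^{[d]})^2$ and the normal subgroup $\langle x^{[2d]}\rangle_{x\in X}$, which is what makes the rewriting terminate in $\overline G_2$.

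\emph{The main obstacle} is the matching \emph{lower bound}, i.e. linear independence of $\{T_g\}_{g\in\overline G_2}$. Following Bourbaki's strategy, one constructs a representation of $\mathcal H$ on the free module $V=\bigoplus_{g\in\overline G_2}R[q^{\pm1}]\,e_g$ of rank $(2d)^n$, prescribing for each atom $x\in X$ an operator on the basis $(e_g)$ that is essentially the shift $e_g\mapsto e_{x\cdot g}$ for a suitable combinatorial left action of $X$ on $\overline G_2$, but corrected by Hecke-type coefficients $q-1$ and $q$ exactly at the step where the multiplicity of $x$ crosses the value $d$ modulo $2d$. One then has to check that these operators satisfy the defining relations of $G$ as well as the quadratic relations: this is the analogue of Matsumoto's lemma and of the well-definedness computations in the spherical Artin--Tits setting, but it must now be carried out against the Garside presentation of $G$ (equivalently, the cycle-set axioms), and it is precisely here that the ``$2d$-periodicity'' of the germ — as opposed to the $2$-periodicity $s^2=1$ of a Coxeter group — makes the local formulas, and their verification, heavier than in the classical case. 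Once $V$ is available, triangularity of the action ($T_g\cdot e_1=e_g+(\text{combination of }e_h,\ h\ne g)$) forces $\{T_g\}$ to be independent, giving the basis and the rank $(2d)^n$. Alternatively one can reach the same conclusion by exhibiting a terminating, locally confluent rewriting system on $G$ whose irreducibles are indexed by $\overline G_2$, Newman's lemma then reducing matters to finitely many critical pairs.

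\emph{Invertibility and the anti-involution (Corollary \ref{cor:inverse_hecke}, Theorem \ref{thm:invol}).} The quadratic relation can be written $x^{[d]}\cdot\bigl(x^{[d]}-(q-1)\bigr)=q$, so $T_{x^{[d]}}$ is invertible with inverse $q^{-1}\bigl(x^{[d]}-(q-1)\bigr)$; more generally, each basis element $T_g$ is by construction the image of an element $\tilde g$ of the \emph{group} $G$, hence a unit of $\mathcal H$ with inverse the image of $\tilde g^{-1}$, and if the transversal is chosen so that $\widetilde{g^{-1}}=\tilde g^{-1}$ then $T_g^{-1}=T_{g^{-1}}$. For the anti-involution one defines on $R[q^{\pm1}][G]$ the anti-automorphism $\iota$ with $\iota(q)=q^{-1}$ and $\iota(g)=g^{-1}$ for $g\in G$, and it suffices to check that $\iota$ stabilises the two-sided ideal $I$ generated by the relators $r_x=(x^{[d]})^2-(q-1)\,x^{[d]}-q$. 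A short computation in the group ring gives $(x^{[d]})^{2}\,\iota(r_x)=-q^{-1}r_x$, and since $(x^{[d]})^{2}$ is a unit of $R[q^{\pm1}][G]$ this shows $\iota(r_x)\in I$; as $\iota$ is an involution, $\iota(I)=I$, so $\iota$ descends to $\mathcal H$, where it sends $T_g$ to $T_{g^{-1}}=T_g^{-1}$ by the previous point.

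\emph{Semisimplicity over $\C(q)$ (Corollary \ref{cor:tits_C}).} This is the classical deformation argument of Tits. By Theorem \ref{hecke-dim}, $\mathcal H$ is free of finite rank $(2d)^n$ over the integrally closed domain $\C[q^{\pm1}]$, whose fraction field is $\C(q)$; specialising at the maximal ideal $(q-1)$ turns the quadratic relation into $(x^{[d]})^2=1$ and identifies $\mathcal H/(q-1)\mathcal H$ with $\C[\overline G_2]$, which is split semisimple by Maschke's theorem. The deformation theorem (see e.g. Geck--Pfeiffer) then yields that $\C(q)\otimes\mathcal H$ is split semisimple and that the decomposition map attached to $q\mapsto 1$ is a degree-preserving bijection between the irreducible characters of $\C(q)\otimes\mathcal H$ and those of $\C[\overline G_2]$; the anti-involution of Theorem \ref{thm:invol}, combined with the basis of Theorem \ref{hecke-dim}, moreover places $\mathcal H$ in the symmetric-algebra framework in which this bijection is most naturally expressed.
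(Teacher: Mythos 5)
Your plan for the central result, Theorem \ref{hecke-dim}, diverges from the paper and has a genuine gap exactly where all the difficulty sits. You propose the classical Bourbaki/Matsumoto route: build a module $V=\bigoplus_{g\in\overline G_2}R[q^{\pm1}]e_g$, let each atom $x\in X$ act by a shift corrected by Hecke coefficients at the ``crossing'' step, and then \emph{check} that these operators satisfy the defining relations of $G$ together with the quadratic relations. But that check is precisely the whole theorem, and you do not carry it out (nor do you prove the Matsumoto-type well-definedness of $T_g$ over reduced expressions, which your construction also needs; in the paper this is the Exchange Lemma \ref{exchange} and Lemma \ref{gen}). Moreover the correction in your operators is not governed by ``the multiplicity of $x$ crossing $d$ modulo $2d$'' but by the coordinate $g_{x*x}$ reaching $2d-1$ and by the cycle-set operation (again Lemma \ref{exchange}), so even the formulas to be verified are not pinned down. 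The paper avoids this verification altogether: its key idea, absent from your proposal, is that the bijective $1$-cocycle $\Pi\colon\Z^n\to G$ of Theorem \ref{istruct} maps the commutative ideal $I_P=(P(X_1^d),\dots,P(X_n^d))$ bijectively onto the Hecke ideal $J_P\subset R[G]$ (Lemma \ref{pialg}), the crucial input being that $x^{[d]}$ lies in the socle, i.e.\ has trivial permutation; linear independence of $\{T_g\}_{g\in\overline G_2}$ then reduces to the trivial statement that the monomials $X_1^{j_1}\cdots X_n^{j_n}$, $0\le j_i<2d$, form a basis of $R[\Z^n]/I_P$ (Lemma \ref{polyquo}). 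The paper only runs a representation-on-$E$ argument of your kind in the special two-generator case of Section \ref{ss:2gen}, where even that small case takes a full page of case analysis, which is a good indication that your sketch cannot be waved through in general.

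Your treatments of the remaining items are essentially sound, and two of them are genuinely different from (and slicker than) the paper's. Invertibility of $T_g$ follows, as you say, simply because $T_g$ is the image of a group element of $G$, whereas the paper computes an explicit inverse (Proposition \ref{prop:hecke_inv}); and your anti-involution argument — descend the canonical anti-automorphism $q\mapsto q^{-1}$, $g\mapsto g^{-1}$ of $R[q^{\pm1}][G]$ after checking $(x^{[2d]})\,\iota(r_x)=-q^{-1}r_x$, so $\iota(r_x)\in I$ — is correct and bypasses the paper's splitting hypothesis and the $\gamma_s^k$ computations in Theorem \ref{thm:invol}. One genuine error there: you cannot choose the transversal so that $T_g^{-1}=T_{g^{-1}}$; the basis elements are defined via reduced \emph{positive} expressions, and indeed $T_s^{-1}\neq T_{s^{-1}}$ (Remark \ref{rmk:tg-1}). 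Fortunately your conclusion only needs $\tilde\iota(T_g)=T_g^{-1}$, which follows since $\tilde\iota$ sends the image of $\tilde g$ to the image of $\tilde g^{-1}$, so drop the identification with $T_{g^{-1}}$. The semisimplicity argument via specialization at $q=1$ and Tits' deformation theorem is the same route as the paper; the paper additionally spells out why the generic algebra is separable (non-degeneracy of the trace form is preserved from the specialization), a point your sketch implicitly defers to the cited deformation theorem.
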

In this explicit version of our results, we chose the polynomial $P(X)=X^2-(q-1)X-q$ to remind of the generic Iwahori--Hecke algebra of Coxeter groups, but our results hold for any polynomial whose leading and constant coefficients are invertible.

In the first section we introduce the necessary definitions on braces that we will need.

In the second section the define of the Hecke algebra is introduced and we show it has the expected dimension (equal to the cardinal of a Coxeter-like group).

In the third section, we explicitly construct an anti-involution on the Hecke algebras, as is known for the case Iwahori--Hecke algebra for Coxeter groups. 

In the fourth section, we provide an application of Tits' Deformation Theorem, relating Hecke algebras and group rings of Coxeter-like groups over a suitable field extension.

Finally, in the fifth section, we focus on a particular example where the naive definition does work, and relate it to known results about Complex Reflection Groups (a generalization of finite Coxeter groups).

Moreover, in Appendix \ref{sec:hecke_intro}, we explain how our definition of the Hecke algebra arises, and why, in general, the naive definition (adapting the definition for Artin--Tits groups of spherical type) doesn't provide the expected properties.
\section{Preliminaries}
The goal of this section is to provide the basic definitions of the approaches used in this article: cycle sets (\cite{rump}) and braces (\cite{brace}). We also give several technical lemmas that will be used in the construction and the study of the Hecke algebras.
\subsection{Cycle sets}
Our basis object to study non-degenerate involutive set-theoretical solutions to the Yang--Baxter equation are cycle sets, which were introduced by Rump (\cite{rump}).  
\begin{defi}[\cite{rump}] A cycle set is a set $S$ endowed with a binary operation $*\colon S\times S\to S$ such that for all $s$ in $S$ the map $\psi(s)\colon t\mapsto s*t$ is bijective and for all $s,t,u$ in $S$:
	\begin{equation}
	\label{RCL}
	(s*t)*(s*u)=(t*s)*(t*u).
	\end{equation}
	When $S$ is finite of size $n$, $\psi(s)$ can be identified with a permutation in $\mathfrak{S}_n$.
	
	When the diagonal map is the identity (i.e. for all $s\in S$, $s*s=s$), $S$ is called square-free.
\end{defi}
From now, we fix a cycle set $(S,*)$.
	\begin{defi}[\cite{rump}] The group $G_S$ associated with $S$ is defined by the presentation:
	\begin{equation}
	\label{RCG}
	G_S\coloneqq\left\langle S\mid  s(s*t)=t(t*s),\: \forall s\neq t\in S\right\rangle.
	\end{equation}
	Similarly, we define the associated monoid  $M_S$ by the presentation: $$M_S\coloneqq\left\langle S\mid  s(s*t)=t(t*s),\: \forall s\neq t\in S\right\rangle^+.$$	
	They will be called the structure group (resp. monoid) of $S$.
	\end{defi}
	\begin{ex}
Let $S=\{s_1,\dots,s_n\}$, $\sigma=(12\dots n)\in\Sym_n$. The operation $s_i*s_j=s_{\sigma(j)}$ makes $S$ into a cycle set, as for all $s,t$ in $S$ we have $(s*t)*(s*s_j)=s_{\sigma^2(j)}=(t*s)*(t*s_j)$.

The structure group of $S$ then has generators $s_1,\dots,s_n$ and relations $s_is_{\sigma(j)}=s_js_{\sigma(i)}$ (which is trivial for $i=j$).

In particular, for $n=2$ we find $G=\langle s,t\mid s^2=t^2\rangle$.
\end{ex}
	When the context is clear, we will write $G$ (resp. $M$) for $G_S$ (resp. $M_S$).
	
	We also assume $S$ to be finite and fix an enumeration $S=\{s_1,\dots,s_n\}$.
	\begin{rmk} 
	By the definition of $\psi\colon S\to \Sym_n$ we have that $s_i*s_j=s_{\psi(s_i)(j)}$, which we will also write $\psi(s_i)(s_j)$ for simplicity.
	\end{rmk}	
\subsection{Braces}
The structure group of a brace has an extra  "ring-like" structure, which was first introduced by Rump in \cite{rump07} as linear cycle sets. An equivalent definition was then introduced by Ced{\'o}, Jespers and Okni{\'n}ski in \cite{leftbrace} and then in a large survey again by Ced{\'o} in \cite{brace}. We will use their definition of a (left) brace throughout this article.
\begin{defi}[\cite{rump07,brace}]
A brace is a triple $(B,+,\cdot)$ such that $(B,+)$ is an abelian group, $(B,\cdot)$ is a group and for all $a,b,c$ in $B$: $$a(b+c)+a=ab+ac.$$
$(B,+)$ will be called the additive group and $(B,\cdot)$ the multiplicative group of the brace $B$.
\end{defi}
We now fix $B$ a brace.
\begin{rmk} Note that, if 0 is the additive identity and 1 the multiplicative identity, then taking $a=1,b=c=0$ yields $1*(0+0)+1=1*0+1*0$, thus $1=0$.
\end{rmk}
\begin{ex}
If $(G,+)$ is an abelian group then $(G,+,+)$ is a brace, called the trivial brace. 

	Taking $(B,+)=\mathbb Z/2\mathbb Z\times \mathbb Z/2\mathbb Z$ with $(a,b)\cdot(c,d)=\begin{cases}(a+c,b+d),& a+b=0 \text{ mod } 2\\(a+d,b+c),& a+b=1\text{ mod } 2\end{cases}$ can be checked to be a left-brace, and obviously $(0,0)$ is the identity of $(B,\cdot)$.
\end{ex}
\begin{propdef}[\cite{brace}]\label{lmap}
For any $a$ in $B$, the map $\lambda:(B,\cdot)\to \text{Aut}(B,+)$ defined by $\lambda_a(b)=ab-a$ for all $a,b$ in $B$, is a well-defined  morphism.

This also gives $ab=a+\lambda_a(b)$. This will be used everywhere to switch between products and sum of elements.
\end{propdef}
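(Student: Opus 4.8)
The plan is to check the three ingredients of the statement separately: that each $\lambda_a$ is additive, that the assignment $a\mapsto\lambda_a$ is multiplicative in the sense $\lambda_{ab}=\lambda_a\circ\lambda_b$, and that each $\lambda_a$ is consequently bijective (so it really lands in $\text{Aut}(B,+)$); the final formula $ab=a+\lambda_a(b)$ is then just the defining relation $\lambda_a(b)=ab-a$ rearranged.

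First I would prove additivity. Writing out $\lambda_a(b+c)=a(b+c)-a$ and using the brace axiom $a(b+c)+a=ab+ac$ together with commutativity of $+$, one gets $\lambda_a(b+c)=ab+ac-a-a=(ab-a)+(ac-a)=\lambda_a(b)+\lambda_a(c)$. Since $(B,+)$ is abelian, an additive self-map is automatically a group endomorphism of $(B,+)$, so $\lambda_a$ also respects additive inverses; I record this as $\lambda_a(b-c)=\lambda_a(b)-\lambda_a(c)$ for later use. Moreover, because $1=0$ in any brace (the Remark preceding the statement), $\lambda_1(b)=1\cdot b-1=b-0=b$, i.e. $\lambda_1=\mathrm{id}$.

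Next I would establish $\lambda_{ab}=\lambda_a\circ\lambda_b$. For any $c\in B$, using associativity of $\cdot$ and the endomorphism property just obtained,
$$\lambda_a\big(\lambda_b(c)\big)=\lambda_a(bc-b)=\lambda_a(bc)-\lambda_a(b)=\big(a(bc)-a\big)-\big(ab-a\big)=a(bc)-ab=(ab)c-ab=\lambda_{ab}(c).$$
Specializing to $b=a^{-1}$ gives $\lambda_a\circ\lambda_{a^{-1}}=\lambda_{1}=\mathrm{id}$ and, symmetrically, $\lambda_{a^{-1}}\circ\lambda_a=\mathrm{id}$; hence $\lambda_a$ is a bijective additive map, i.e. an element of $\text{Aut}(B,+)$, and the displayed identity says precisely that $\lambda\colon(B,\cdot)\to\text{Aut}(B,+)$ is a group morphism. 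This finishes everything, the last assertion $ab=a+\lambda_a(b)$ being immediate from the definition.

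I do not expect a genuine obstacle here; the only mildly subtle points are invoking $1=0$ to normalize $\lambda_1$, and noticing that mere additivity of $\lambda_a$ already forces it to commute with additive inversion — which is exactly what lets the computation of $\lambda_{ab}$ run \emph{before} one knows $\lambda_a$ is invertible, after which invertibility drops out for free.
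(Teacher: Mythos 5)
Your proof is correct and complete: additivity of $\lambda_a$ from the brace axiom, $\lambda_{ab}=\lambda_a\circ\lambda_b$ via associativity, and invertibility from $\lambda_1=\mathrm{id}$ (using $1=0$) are exactly the checks needed, and the final identity is indeed just a rearrangement. The paper itself gives no proof of this Proposition-Definition (it is imported from the cited reference on braces), and your argument is the standard direct verification, so there is nothing substantive to compare; the only noteworthy organizational point is your observation that additivity alone lets you compute $\lambda_{ab}$ before knowing invertibility, which then comes for free.
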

\begin{ex} From the previous example we have respectively $\lambda_g=\text{id}_G$ for all $g$ in $G$, and in $(B,+,\cdot)$ $\lambda((a,b))=\sigma^{a+b}$ where $\sigma$ permutes the two coordinate of $(B,+)$, and obviously $(0,0)$ is the identity of $(B,\cdot)$.
\end{ex}
\begin{lem}[\cite{brace}]\label{prodsum}\label{ab-1}\label{ab-1soc}
For any $a,b$ in $B$ we have:
\begin{enumerate}
\item $\lambda_a\lambda_b=\lambda_{a+\lambda_a(b)}$.
\item $ab^{-1}=-\lambda_{ab^{-1}}(b)+a$
\item If $\lambda_a=\lambda_b$ then $ab^{-1}=a-b$
\end{enumerate}
\end{lem}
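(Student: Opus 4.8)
The plan is to derive all three identities formally from the two facts recorded in Proposition-Definition~\ref{lmap}: that $\lambda\colon(B,\cdot)\to\operatorname{Aut}(B,+)$ is a group morphism, and that $ab=a+\lambda_a(b)$ for all $a,b\in B$; in particular $\lambda_1=\operatorname{id}$ and $\lambda_{a^{-1}}=\lambda_a^{-1}$. For (1), I would compute $\lambda_{ab}$ in two ways: since $\lambda$ is a morphism, $\lambda_{ab}=\lambda_a\lambda_b$; and since $ab=a+\lambda_a(b)$, also $\lambda_{ab}=\lambda_{a+\lambda_a(b)}$. Equating the two expressions gives $\lambda_a\lambda_b=\lambda_{a+\lambda_a(b)}$.

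For (2), set $c\coloneqq ab^{-1}$, so that $cb=a$. Applying the identity $xy=x+\lambda_x(y)$ to the pair $(c,b)$ gives $a=cb=c+\lambda_c(b)$, and since $(B,+)$ is abelian this rearranges to $c=-\lambda_c(b)+a$, i.e. $ab^{-1}=-\lambda_{ab^{-1}}(b)+a$. For (3), the extra hypothesis $\lambda_a=\lambda_b$ pins down $\lambda_{ab^{-1}}$: using again that $\lambda$ is a morphism, $\lambda_{ab^{-1}}=\lambda_a\lambda_{b^{-1}}=\lambda_a\lambda_b^{-1}=\lambda_b\lambda_b^{-1}=\operatorname{id}$. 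Substituting $\lambda_{ab^{-1}}=\operatorname{id}$ into the identity just established in (2) yields $ab^{-1}=-b+a=a-b$.

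There is no serious obstacle here: everything is a short formal manipulation once Proposition-Definition~\ref{lmap} is available. The only point requiring care is the bookkeeping between the abelian additive group and the non-abelian multiplicative group — for instance, rewriting $-\lambda_c(b)+a$ as $a-\lambda_c(b)$ is legitimate, whereas the analogous reordering of a product is not, and one must take care to cancel $b^{-1}b$ multiplicatively rather than additively.
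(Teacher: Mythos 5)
Your proposal is correct and follows essentially the same route as the paper: (1) from $ab=a+\lambda_a(b)$ together with $\lambda$ being a morphism, (2) by expanding $\lambda_{ab^{-1}}(b)$ via the defining identity and using commutativity of $(B,+)$, and (3) by showing $\lambda_{ab^{-1}}=\operatorname{id}$ from the morphism property and then specializing (2). No gaps; the bookkeeping points you flag are exactly the ones that matter.
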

\begin{proof}
This first one follows from $gh=g+\lambda_g(h)$.

For the second one, $-\lambda_{ab^{-1}}(b)+a=-ab^{-1}b-ab^{-1}+a=ab^{-1}$.

And then, $\lambda_{ab^{-1}}=\lambda_a\lambda_b^{-1}=\lambda_a\lambda_a^{-1}=\text{id}_B.$
\end{proof}
\begin{lem}\label{ybe}
For any $a,b$ in $B$, we have $a\lambda_a^{-1}(b)=b\lambda_b^{-1}(a)$.

Moreover, $\lambda^{-1}_{\lambda^{-1}_a(b)}\lambda_a^{-1}=\lambda^{-1}_{\lambda^{-1}_b(a)}\lambda_b^{-1}$.
\end{lem}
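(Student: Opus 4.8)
The plan is to derive both assertions from the single identity $gh=g+\lambda_g(h)$ recorded in Proposition-Definition \ref{lmap}, together with the multiplicativity $\lambda_{gh}=\lambda_g\lambda_h$, which is the content of Lemma \ref{prodsum}(1) rewritten via $g+\lambda_g(h)=gh$.

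First I would prove the equality $a\lambda_a^{-1}(b)=b\lambda_b^{-1}(a)$. Substituting $h=\lambda_a^{-1}(b)$ into $gh=g+\lambda_g(h)$ with $g=a$ gives $a\lambda_a^{-1}(b)=a+\lambda_a(\lambda_a^{-1}(b))=a+b$. The same computation with the roles of $a$ and $b$ exchanged gives $b\lambda_b^{-1}(a)=b+a$. Since $(B,+)$ is abelian, $a+b=b+a$, and the first assertion follows. Write $u$ for this common element, so that $u=a\cdot\lambda_a^{-1}(b)=b\cdot\lambda_b^{-1}(a)$ as a product in $(B,\cdot)$.

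For the second equality I would apply $\lambda$, which by Proposition-Definition \ref{lmap} is a morphism $(B,\cdot)\to\text{Aut}(B,+)$, to the two factorizations of $u$. Using $\lambda_{gh}=\lambda_g\lambda_h$ on each one yields $\lambda_a\lambda_{\lambda_a^{-1}(b)}=\lambda_u=\lambda_b\lambda_{\lambda_b^{-1}(a)}$. Taking inverses of this equality of automorphisms of $(B,+)$ gives $\lambda_{\lambda_a^{-1}(b)}^{-1}\lambda_a^{-1}=\lambda_{\lambda_b^{-1}(a)}^{-1}\lambda_b^{-1}$, which is exactly the claimed identity.

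There is no genuine obstacle: the lemma is a purely formal consequence of the brace axioms through Proposition-Definition \ref{lmap} and Lemma \ref{prodsum}(1). The only point requiring a little care is to apply $\lambda$ to the \emph{product} form $u=a\cdot\lambda_a^{-1}(b)=b\cdot\lambda_b^{-1}(a)$ rather than to its additive form $a+b$, since it is precisely the comparison of the two factorizations of $u$ that produces the desired relation after inverting.
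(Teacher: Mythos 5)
Your proof is correct and follows essentially the same route as the paper: both parts reduce to the identity $ab=a+\lambda_a(b)$ of Proposition-Definition \ref{lmap} (giving $a\lambda_a^{-1}(b)=a+b=b+a=b\lambda_b^{-1}(a)$) and to the multiplicativity of $\lambda$, applied to the two factorizations of the common element and then inverted. The paper merely phrases the second step directly in terms of $\lambda_{gh}^{-1}=\lambda_h^{-1}\lambda_g^{-1}$ rather than applying $\lambda$ first and inverting afterwards, which is an immaterial difference.
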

\begin{proof}
Firstly,
$$a\lambda_a^{-1}(b)=a(a^{-1}b-a^{-1})=b-1+a=b-0+a=b+a=a+b=b\lambda_b^{-1}(a).$$
Then from the fact that $\lambda\colon(B,\cdot)\to\text{Aut}(B,+)$ is a morphism we have that $\lambda_{ab}^{-1}=\lambda_b^{-1}\lambda_a^{-1}$ so $$\lambda^{-1}_{\lambda^{-1}_a(b)}\lambda_a^{-1}=\lambda^{-1}_{a\lambda^{-1}_a(b)}=\lambda^{-1}_{b\lambda^{-1}_b(a)}=\lambda^{-1}_{\lambda^{-1}_b(a)}\lambda_b^{-1}.$$
\end{proof}
The following is implicit in \cite{brace}: 
\begin{lem}[\cite{brace}]
Let $S$ be a subset of a brace $(B,+,\cdot$ such that $\lambda_s(S)\subseteq S$ for any $s$ in $S$. Then $(S,+)$ is a subgroup of $(B,+)$ if and only if it is a subgroup of $(B,\cdot)$.
\end{lem}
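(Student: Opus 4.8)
The plan is to push everything through the identity $ab = a + \lambda_a(b)$ from Proposition-Definition \ref{lmap}, together with the facts that $\lambda\colon (B,\cdot)\to\text{Aut}(B,+)$ is a group morphism (so $\lambda_{a^{-1}}=\lambda_a^{-1}$ and $\lambda_1=\text{id}$) and that $0=1$ in $B$. I would prove the two implications separately. In each direction, transferring closure through $ab=a+\lambda_a(b)$ is immediate, and the neutral element $0=1$ lies in both structures for free; the only point that needs care is the behaviour under inverses, which forces one to know that $\lambda_a$ restricts to a \emph{bijection} of $S$ and not merely maps $S$ into $S$.

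For ``$(S,\cdot)$ a subgroup $\Rightarrow$ $(S,+)$ a subgroup'': given $a\in S$ we have $a^{-1}\in S$, so the hypothesis applied to $a^{-1}$ gives $\lambda_a^{-1}(S)=\lambda_{a^{-1}}(S)\subseteq S$; combined with $\lambda_a(S)\subseteq S$ this yields $\lambda_a(S)=S$. Hence for $a,b\in S$ one has $\lambda_a^{-1}(b)\in S$ and $a+b = a\cdot\lambda_a^{-1}(b)\in S$, while $a\cdot a^{-1}=0$ rewrites as $-a = \lambda_a(a^{-1})\in\lambda_a(S)\subseteq S$; together with $0=1\in S$ this gives that $(S,+)$ is an additive subgroup.

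For the converse, closure is $ab=a+\lambda_a(b)\in S$ (as $\lambda_a(b)\in\lambda_a(S)\subseteq S$ and $(S,+)$ is additively closed), and $0=1\in S$. For inverses, $a^{-1}\cdot a = 0$ rewrites as $a^{-1}=-\lambda_a^{-1}(a)$, so it suffices to know $a\in\lambda_a(S)$; here I would use that $\lambda_a$ is injective and maps $S$ into $S$, hence --- the sets $S$ occurring in this paper being finite --- induces a bijection of $S$, whence $\lambda_a(S)=S$ and $a^{-1}\in S$. This surjectivity of $\lambda_a$ on $S$, needed to invert in this second direction, is the only genuinely delicate step and hence the main obstacle: an additive subgroup stabilized by an automorphism need not be stabilized by the inverse automorphism, so one must invoke either the finiteness of $S$ (or, alternatively, the stronger $\lambda$-invariance $\lambda_b(S)\subseteq S$ for all $b\in B$, under which the argument of the first direction already produces $\lambda_a(S)=S$). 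After that, everything is a purely formal manipulation of $ab=a+\lambda_a(b)$.
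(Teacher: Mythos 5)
Your argument runs on the same identity $ab=a+\lambda_a(b)$ that the paper invokes (its entire proof is that one line), and your direction ``$(S,\cdot)$ subgroup $\Rightarrow(S,+)$ subgroup'' is complete under the stated hypotheses: since $a^{-1}\in S$, the hypothesis applied to $a^{-1}$ gives $\lambda_a^{-1}(S)=\lambda_{a^{-1}}(S)\subseteq S$, and then $a+b=a\lambda_a^{-1}(b)$, $-a=\lambda_a(a^{-1})$ and $0=1$ transfer everything correctly. You are also right that the only delicate point of the whole lemma is the multiplicative inverse in the converse direction, a point the paper's one-line proof does not address at all.

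The problem is the patch you use there. The lemma carries no finiteness hypothesis on $S$: it is stated for an arbitrary subset of an arbitrary brace, and the sets to which the paper implicitly applies it (left ideals and ideals, e.g.\ inside the structure brace $G\cong(\Z^X,+,\cdot)$) are infinite, so ``the sets $S$ occurring in this paper being finite'' is not a licence you actually have. As you yourself note, an additive subgroup stable under an automorphism need not be stable under its inverse, so the step $\lambda_a^{-1}(a)\in S$, hence $a^{-1}=-\lambda_a^{-1}(a)\in S$, does not follow from $\lambda_s(S)\subseteq S$ for $s\in S$ alone. Two repairs are available in the paper's setting: (1) for left ideals one has the stronger invariance $\lambda_b(S)\subseteq S$ for all $b\in B$, whence $\lambda_{a^{-1}}(a)\in S$ directly --- this is exactly your parenthetical alternative and it covers the paper's actual uses; (2) for the braces of this paper, $\lambda$ takes values in the finite group of permutations of the basis $X$ of $\Z^X$ (Theorem \ref{istruct}), so every $\lambda_a$ has finite order and $\lambda_a^{-1}$ is a positive power of $\lambda_a$; then $\lambda_a(S)\subseteq S$ already forces $\lambda_a^{-1}(S)\subseteq S$, with no hypothesis on $S$ at all. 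So: you have isolated the genuine subtlety more honestly than the paper does, but as written your converse direction rests on an assumption that is neither in the statement nor true of the intended applications; replace the finiteness appeal by one of the two repairs above.
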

\begin{proof}
This follows from the identity $ab=a+\lambda_a(b)$ (or equivalently $a+b=a\lambda_a^{-1}(b)$).
\end{proof}
\begin{defi}[\cite{brace}] Let $(B,+,\cdot)$ be a brace.
\begin{itemize}
\item $S\subseteq B$ is a subbrace if it is a subgroup of both $(B,+)$ and $(B,\cdot)$.
\item $L\subseteq B$ is a left ideal if it is a subgroup of $(B,+)$ and $\lambda_a(I)\subseteq I$ for all $a$ in $B$.
\item $I\subseteq B$ is an ideal if it is a normal subgroup of $(B,\cdot)$ and $\lambda_a(I)\subseteq I$ for all $a$ in $B$.
\end{itemize}
\end{defi}
\begin{prop}[\cite{brace}] Let $(B,+,\cdot)$ be a brace and $I\subseteq B$.
\begin{itemize}
\item $I$ is an ideal $\Rightarrow$ $I$ is a left ideal $\Rightarrow$ $I$ is a subbrace.
\item If $I$ is an ideal then the multiplicative quotient $B/I$ has an induced brace structure $(B/I,+,\cdot)$.
\item $\text{Soc}(B)=\text{Ker}(\lambda)=\{a\in B\mid \forall b\in B, ab=a+b\}$ is an ideal called the Socle of B.
\end{itemize}
\end{prop}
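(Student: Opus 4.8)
The plan is to dispatch the three items in order, leaning on the lemma just proved (about subsets $S$ with $\lambda_s(S)\subseteq S$) and on Lemma~\ref{ab-1}, so as to avoid unpacking the brace axiom from scratch.

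For the chain of implications I would observe that both ``ideal'' and ``left ideal'' require $\lambda_a(I)\subseteq I$ for \emph{all} $a\in B$, in particular for all $a\in I$; thus the hypothesis of the preceding lemma holds with $S=I$. An ideal $I$ is by definition a subgroup of $(B,\cdot)$, so the lemma upgrades this to a subgroup of $(B,+)$, i.e.\ $I$ is a left ideal; a left ideal $I$ is by definition a subgroup of $(B,+)$, so the lemma upgrades this to a subgroup of $(B,\cdot)$, i.e.\ $I$ is a subbrace. Nothing further is needed here.

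For the quotient, the point I would make is that the multiplicative and additive cosets of an ideal coincide: since $\lambda_a(I)\subseteq I$ and $\lambda_{a^{-1}}=\lambda_a^{-1}$ also preserves $I$, in fact $\lambda_a(I)=I$, so $aI=a+\lambda_a(I)=a+I$ for every $a\in B$. Hence the set $B/I$ carries simultaneously the quotient abelian group structure coming from $(B,+)$ (legitimate since, by the first item, $I$ is a subgroup of the abelian group $(B,+)$) and the quotient group structure coming from $(B,\cdot)$ (legitimate since $I\trianglelefteq(B,\cdot)$), and the canonical projection is at once additive and multiplicative; so the identity $a(b+c)+a=ab+ac$, valid in $B$, descends, and $(B/I,+,\cdot)$ is a brace.

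For the Socle, I would first note that $ab=a+\lambda_a(b)$ makes $\lambda_a=\text{id}$ equivalent to ``$ab=a+b$ for all $b$'', so the two displayed descriptions of $\text{Soc}(B)$ agree; being the kernel of the homomorphism $\lambda\colon(B,\cdot)\to\text{Aut}(B,+)$, $\text{Soc}(B)$ is automatically a normal subgroup of $(B,\cdot)$. The only genuine computation, and the step I expect to be the crux, is the $\lambda$-invariance: for $s\in\text{Soc}(B)$ and $a\in B$, the conjugate $c=asa^{-1}$ again lies in $\text{Soc}(B)$ by normality, so $\lambda_c=\text{id}$; then Lemma~\ref{ab-1}(2) applied to the pair $(as,a)$ gives $asa^{-1}=-\lambda_{asa^{-1}}(a)+as=-a+as=as-a=\lambda_a(s)$, whence $\lambda_a(s)=asa^{-1}\in\text{Soc}(B)$. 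This identity $\lambda_a(s)=asa^{-1}$ for socle elements is exactly what turns the purely multiplicative fact ``a kernel is normal'' into the mixed statement ``$\lambda$-invariance'', and once it is available all three items follow.
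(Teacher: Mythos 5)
Your proof is correct. Note that the paper itself gives no argument for this proposition — it is quoted from \cite{brace} without proof — so there is no internal proof to compare against; what you wrote is a sound, self-contained derivation using exactly the tools the paper has set up. The chain of implications is a legitimate application of the preceding lemma with $S=I$ (the hypothesis $\lambda_s(I)\subseteq I$ for $s\in I$ being a special case of $\lambda_a(I)\subseteq I$ for all $a$), the observation $\lambda_a(I)=I$ (via $\lambda_{a^{-1}}=\lambda_a^{-1}$) and hence $aI=a+\lambda_a(I)=a+I$ is the right way to see that the two quotient structures live on the same set of cosets so that the brace identity descends, and your treatment of the socle hits the standard crux: the identity $\lambda_a(s)=asa^{-1}$ for $s\in\text{Soc}(B)$, which converts normality of $\ker\lambda$ into $\lambda$-invariance. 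Your derivation of that identity from Lemma~\ref{ab-1}(2) is valid; as a slight shortcut, one can also get it directly from the definition of the socle: with $c=asa^{-1}\in\ker\lambda$ one has $as=ca=c+a$, so $c=as-a=\lambda_a(s)$ by Proposition-Definition~\ref{lmap}. Either way the argument matches the proof found in the cited survey \cite{brace}.
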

In \cite{etingof}, it was shown that there exists a bijective 1-cocyle $\Pi\colon\Z^S\to G$, i.e.a bijective map such that $\Pi(gh)=\Pi(g)\cdot \lambda_g^{-1}(\Pi(h))$ for any $g,h\in\Z^S$. This so called "I-structure" was shown in \cite{rump07,brace} to be equivalent to the following:
\begin{thm}[\cite{etingof,brace}]\label{istruct}
The structure group $G$ of a finite cycle set $S$ has a brace structure given by $(\Z^S,+,\cdot)$ such that $G$ with the usual multiplication is isomorphic to $(\Z^S,\cdot)$.

Moreover, for any $s,t$ in $S$, we have $\lambda^{-1}_s(t)=\psi(s)(t)$.
\end{thm}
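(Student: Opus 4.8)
\textbf{The plan} is to spell out the dictionary behind two facts already in the literature — the existence of a bijective $1$-cocycle, or ``I-structure'' (\cite{etingof}), and Rump's equivalence between linear cycle sets and braces (\cite{rump07}, see also \cite{brace}) — so that the formula for $\lambda$ drops out as well. I would begin with an auxiliary left action $\rho\colon G\to\text{Aut}(\Z^S,+)$, where the generator $s$ is sent to (the linear extension of) the basis permutation $\psi(s)^{-1}$; this respects the presentation of $G$ because the relation $s(s*t)=t(t*s)$ requires $\psi(s)^{-1}\psi(s*t)^{-1}=\psi(t)^{-1}\psi(t*s)^{-1}$, i.e. $\psi(s*t)\,\psi(s)=\psi(t*s)\,\psi(t)$, which is the cycle-set axiom \eqref{RCL} regarded as an equality of functions of $u\in S$. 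By \cite{etingof} the map $\Pi\colon\Z^S\to G$ is a bijection and, writing $\pi=\Pi^{-1}$, it satisfies $\pi(gh)=\pi(g)+\rho(g)\bigl(\pi(h)\bigr)$ with $\pi(s)=e_s$ for $s\in S$; the bijectivity of $\pi$ is the one substantive external input, and I would invoke it directly — the routine half (that $\pi$ kills the presentation relations) is $\pi\bigl(s(s*t)\bigr)=e_s+\psi(s)^{-1}(e_{s*t})=e_s+e_t=\pi\bigl(t(t*s)\bigr)$.

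Next I would transport the additive group along $\Pi$: declare $\Pi(v)+\Pi(w)\coloneqq\Pi(v+w)$, equivalently equip the set $\Z^S$ with the multiplication $v\cdot w\coloneqq\pi\bigl(\Pi(v)\Pi(w)\bigr)=v+\rho(\Pi(v))(w)$, so that $(\Z^S,+)$ is free abelian of rank $n$ and $(\Z^S,\cdot)\cong G$ by construction. It then remains only to check the brace identity and to compute $\lambda$. For $a(b+c)+a=ab+ac$, working in $(\Z^S,+,\cdot)$ the two sides are $\bigl(u+\rho(\Pi(u))(v+w)\bigr)+u$ and $\bigl(u+\rho(\Pi(u))(v)\bigr)+\bigl(u+\rho(\Pi(u))(w)\bigr)$, and both collapse to $u+u+\rho(\Pi(u))(v)+\rho(\Pi(u))(w)$ since $\rho(\Pi(u))$ is additive. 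Equivalently, $v\cdot w=v+\lambda_v(w)$ with $\lambda_v\coloneqq\rho(\Pi(v))$ an additive automorphism depending on $v$, which by Proposition-Definition \ref{lmap} is exactly the brace axiom; transporting the brace from $\Z^S$ back to $G$ along $\Pi$ yields the statement as phrased. Finally, from $\lambda_v=\rho(\Pi(v))$ and $\Pi(e_u)=u$ one reads off, for $s,t\in S$, that $\lambda_s(t)=\Pi\bigl(\psi(s)^{-1}(e_t)\bigr)=\psi(s)^{-1}(t)$, hence $\lambda_s^{-1}(t)=\psi(s)(t)$, which is the remaining assertion.

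\textbf{Main obstacle.} The only genuinely nontrivial ingredient is the bijectivity of the cocycle $\pi$, which is the theorem of \cite{etingof} and which I would not reprove; everything afterwards is bookkeeping. The care required is precisely in that bookkeeping: four maps are threaded together — the cycle-set operation $*$, the permutations $\psi(s)$, the action $\rho$, and the brace map $\lambda$ — each differing from a neighbour by an inverse, so mis-orienting any one of them turns the final formula into $\lambda_s(t)=\psi(s)(t)$. I would pin the conventions down against an independent identity, for instance $a\,\lambda_a^{-1}(b)=a+b$ (the shape the cocycle relation takes in the proof of Lemma \ref{ybe}), which forces $\rho(s)=\psi(s)^{-1}$ rather than $\psi(s)$.
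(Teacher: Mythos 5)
The paper offers no proof of Theorem \ref{istruct}: it is quoted as a known result from \cite{etingof,brace}, so there is no internal argument to compare against. Your reconstruction is correct and matches the intended reading of that result: the relation check for $\rho(s)=\psi(s)^{-1}$ is exactly the cycle-set axiom \eqref{RCL}, the transport of $+$ along the bijective $1$-cocycle gives the brace axiom by additivity of $\rho(\Pi(u))$, and the resulting $\lambda_s=\psi(s)^{-1}$ (hence $\lambda_s^{-1}(t)=\psi(s)(t)$) is consistent with the conventions the paper uses elsewhere, e.g. $a\lambda_a^{-1}(b)=a+b$ in Lemma \ref{ybe} turning the defining relation $s(s*t)=t(t*s)$ into $s+t=t+s$. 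Invoking bijectivity of the cocycle from \cite{etingof} without reproving it is legitimate here, since that is precisely the substantive content of the cited theorem.
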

From the I-structure mentioned above we can write any element of $G$ as $g=\sum\limits_{s\in S} g_s s$ where $g_s\in\mathbb Z$. Then for any $h$ in $G$, we have $\lambda_h(g)=\sum_S g_s \lambda_h(s)$ with $\lambda_h(s)$ in $S$.

Because we will work over group rings $R[G]$, we will use Dehornoy's notation from \cite{rcc} to denote $s^{[k]}=ks$ for any $k\in\Z$ (to avoid confusion with the element $k\cdot s\in R[G]$).

In \cite{rcc}, Dehornoy constructed a finite quotient of the structure group, which he calls a "Coxeter-like" group:
\begin{thm}[{\cite{rcc,fein}}]\label{germ} Let $S$ be a finite non-trivial cycle set. Then there exists a positive integer $d$ such that $dS\subset \text{Soc}(G)$.

Moreover, for any positive integer $l$, the quotient $\overline G_l\coloneqq G/\langle (ld)s\rangle$ admits a quotient brace structure given by $((\Z/ld\Z)^S,+,\cdot)$.
\end{thm}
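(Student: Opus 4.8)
The plan is to do everything inside the brace $B=(\Z^S,+,\cdot)$ attached to $G$ by Theorem \ref{istruct}, reducing both assertions to two facts: that the image of $\lambda$ is \emph{finite}, and that conjugation in $(G,\cdot)$ by a socle element is computed by $\lambda$. For the existence of $d$, I would first note that by Theorem \ref{istruct} we have $\lambda_s^{-1}(t)=\psi(s)(t)\in S$ for all $s,t\in S$, so each $\lambda_s$ permutes the $\Z$-basis $S$ of $(\Z^S,+)$. Since $G=\langle S\rangle$ and $\lambda\colon(G,\cdot)\to\operatorname{Aut}(\Z^S,+)$ is a group morphism, $\lambda(G)=\langle\lambda_s:s\in S\rangle$ embeds in $\Sym(S)\cong\Sym_n$ and is therefore finite, of some order $N$; hence $\text{Soc}(G)=\operatorname{Ker}\lambda$ has index $N$ in $(G,\cdot)$. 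As $\text{Soc}(G)$ is an ideal, $a\,\text{Soc}(G)=a+\lambda_a(\text{Soc}(G))=a+\text{Soc}(G)$ for every $a$, so its multiplicative and additive cosets coincide and it also has index $N$ in $(\Z^S,+)\cong\Z^n$. A subgroup of index $N$ in $\Z^n$ contains $N\Z^n$, so $N\cdot\Z^S\subseteq\text{Soc}(G)$, and in particular $s^{[N]}\in\text{Soc}(G)$ for all $s\in S$. One may then take $d$ to be the least positive integer with $d\Z^S\subseteq\text{Soc}(G)$ (equivalently, with $dS\subseteq\text{Soc}(G)$, since $\text{Soc}(G)$ is an additive subgroup).

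For the quotient brace, fix $l\ge1$ and set $I:=(ld\Z)^S\subseteq\Z^S$. This is an additive subgroup, and since every $\lambda_a$ ($a\in G$) permutes $S$ it stabilizes $I$, so $I$ is $\lambda$-invariant; by the lemma that a $\lambda$-stable subset is an additive subgroup iff it is a multiplicative one, $I$ is also a subgroup of $(G,\cdot)$. Because $l\ge1$ we have $I\subseteq d\Z^S\subseteq\text{Soc}(G)$, so $\lambda_b=\operatorname{id}$ for every $b\in I$. The key computation is that for $b\in\text{Soc}(G)$ and $a\in G$ one has $\lambda_{ab}=\lambda_a\lambda_b=\lambda_a$, whence Lemma \ref{ab-1soc}(3) gives $aba^{-1}=ab-a=\lambda_a(b)$; applying this with $b\in I$ yields $aIa^{-1}=\lambda_a(I)=I$, so $I$ is normal in $(G,\cdot)$, i.e.\ $I$ is an ideal of $B$. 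The same identity shows that the normal closure of $\{s^{[ld]}:s\in S\}$ in $(G,\cdot)$ equals the multiplicative subgroup generated by $\{t^{[ld]}:t\in S\}$; all these generators lie in the socle, so this subgroup is $\lambda$-stable and, by the same lemma, coincides with the additive subgroup it generates, namely $I$. Hence $\overline G_l=G/I$, and the quotient-brace construction for ideals endows it with a brace structure whose additive group is $(\Z^S,+)/(ld\Z)^S=(\Z/ld\Z)^S$, which is the claim.

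I expect the only genuinely delicate step to be the second one: reconciling the multiplicatively-defined normal subgroup $\langle(ld)s\rangle$ with the transparent additive subgroup $(ld\Z)^S$, and verifying it is an ideal rather than merely a left ideal. Everything there hinges on the single identity $aba^{-1}=\lambda_a(b)$ for socle elements $b$, together with the "additive subgroup iff multiplicative subgroup" lemma for $\lambda$-stable sets; once these are in hand, the rest is bookkeeping. By contrast, the mere existence of $d$ is essentially immediate once one observes, via Theorem \ref{istruct}, that $\lambda(G)$ is finite.
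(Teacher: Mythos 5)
Your argument is correct, and since the paper states Theorem \ref{germ} only as an imported result (citing \cite{rcc,fein}) without reproducing a proof, there is no internal argument to compare against; your write-up is a legitimate self-contained derivation from the ingredients the paper does set up. The route you take is the natural one: Theorem \ref{istruct} gives $\lambda_s^{-1}(t)=\psi(s)(t)\in S$, so $\lambda(G)$ permutes the basis $S$ and is finite, the socle (being an ideal, hence an additive subgroup with coinciding additive and multiplicative cosets) has finite additive index $N$ in $\Z^S$, and $N\Z^S\subseteq\text{Soc}(G)$ gives the existence of $d$; then $I=(ld\Z)^S$ is $\lambda$-stable, multiplicatively closed by the paper's ``additive iff multiplicative subgroup'' lemma, normal via $aba^{-1}=\lambda_a(b)$ for $b\in\text{Soc}(G)$ (your use of Lemma \ref{ab-1soc}(3) with the pair $ab,a$ is exactly right), hence an ideal, and the quotient brace has additive group $(\Z/ld\Z)^S$. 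The only step stated more tersely than it deserves is the identification of $\langle (ld)s\rangle$ (the normal closure) with $I$: your appeal to ``the same lemma'' works, but the cleanest phrasing is that conjugates of $s^{[ld]}$ are $\lambda_a(s^{[ld]})=(\lambda_a(s))^{[ld]}$, so the normal closure is the multiplicative subgroup generated by $\{t^{[ld]}\}_{t\in S}$, and since these all lie in $\text{Soc}(G)$, where $xy=x+y$ and $x^{-1}=-x$, that subgroup is literally the additive span $I$; equivalently, you already proved $I$ is a normal multiplicative subgroup containing the generators, giving both inclusions at once. With that small clarification the proof is complete and matches in spirit the arguments of the cited sources, which likewise rest on the finiteness of the permutation group attached to the I-structure.
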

In particular, this means that the bijective 1-cocycle $\Pi\colon\Z^S\to G$ induces a bijective 1-cocycle $\overline \Pi\colon(\Z/ld\Z)^S\to\overline G_l$.

The smallest positive integer satisfying the condition of Theorem \ref{germ} is called the Dehornoy's class of $S$, and the set of positive integers that will satisfy the condition are the multiple of $d$. 

We denote by $T$ the diagonal map of $S$ defined by $T(s)=s*s$. The following Proposition will be useful:
\begin{prop}[\cite{fein}]\label{o(T)} The followings hold:
\begin{enumerate}[label=(\roman*)]
\item Let $o$ be the order of $T$ and $k$ any positive integer. Consider the euclidean division of $k$ by $o$ to write $k=o\cdot q+r$, then we have $ks=sT(s)T^2(s)\dots T^{k-1}(s)=(os)^{q}(rs)$.
\item The order $o$ of $T$ divides $d$. In particular, for any integer $k$ and any $s$ in $S$, we have $\lambda^{-1}_{ks}(s)=T^{k}(s)$ and $kds=\left(sT(s)\dots T^{o-1}(s)\right)^k$.
\end{enumerate}
\end{prop}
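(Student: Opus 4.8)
The plan is to prove the two displayed identities of~(i) together with the middle identity of~(ii) \emph{simultaneously}, by one induction on $k$, and then read off the remaining statements. The ingredients are: Theorem~\ref{istruct}, which gives $\lambda_u^{-1}(v)=\psi(u)(v)=u*v$ for $u,v\in S$, and in particular $\lambda_u^{-1}(u)=u*u=T(u)$, so that
$$T^{j+1}(s)=\lambda_{T^j(s)}^{-1}\big(T^j(s)\big)\qquad(j\ge 0);$$
the standing identities $ab=a+\lambda_a(b)$ and $a+b=a\,\lambda_a^{-1}(b)$ from Proposition-Definition~\ref{lmap}; and the fact that $\lambda\colon(B,\cdot)\to\mathrm{Aut}(B,+)$ is a group homomorphism, hence $\lambda_{uv}=\lambda_u\circ\lambda_v$.

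For the induction, the case $k=1$ is trivial. Assuming $ks=sT(s)\cdots T^{k-1}(s)$ as a product in $G$, the homomorphism property gives $\lambda_{ks}=\lambda_s\lambda_{T(s)}\cdots\lambda_{T^{k-1}(s)}$, so $\lambda_{ks}^{-1}=\lambda_{T^{k-1}(s)}^{-1}\cdots\lambda_{T(s)}^{-1}\lambda_s^{-1}$; evaluating at $s$ and telescoping via $\lambda_{T^j(s)}^{-1}(T^j(s))=T^{j+1}(s)$ yields $\lambda_{ks}^{-1}(s)=T^k(s)$, the middle identity of~(ii). Then
$$(k+1)s=ks+s=(ks)\,\lambda_{ks}^{-1}(s)=sT(s)\cdots T^{k-1}(s)\cdot T^k(s),$$
which closes the induction. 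This interlock is the only delicate point of the argument: the product formula for $ks$ is exactly what permits the factorization of $\lambda_{ks}$, and that factorization is in turn what makes the telescoping — and hence the next inductive step — go through, so neither statement can be established before the other; everything else is bookkeeping.

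It remains to deduce the rest. Since $o$ is the order of the permutation $T$, we have $T^o=\mathrm{id}_S$, so $i\mapsto T^i(s)$ has period dividing $o$; splitting the $k=oq+r$ factors of $sT(s)\cdots T^{k-1}(s)$ into $q$ consecutive blocks of length $o$ and a final block of length $r$ gives $ks=(os)^q(rs)$, finishing~(i). For~(ii), Theorem~\ref{germ} yields $dS\subseteq\mathrm{Soc}(G)=\mathrm{Ker}(\lambda)$, i.e.\ $\lambda_{ds}=\mathrm{id}$; by the middle identity just proved, $s=\lambda_{ds}^{-1}(s)=T^d(s)$ for every $s\in S$, whence $T^d=\mathrm{id}_S$ and $o\mid d$. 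Writing $d=oe$, the euclidean division of $kd$ by $o$ has remainder $0$ and quotient $ke$, so part~(i) applied to $kd$ gives $kds=(os)^{ke}=\big(sT(s)\cdots T^{o-1}(s)\big)^{ke}$, which — using $T^o=\mathrm{id}_S$ once more — is the same as $(ds)^k$ with $ds=sT(s)\cdots T^{d-1}(s)$.
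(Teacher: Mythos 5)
Your argument is correct, and since the paper only cites \cite{fein} for this proposition without reproducing a proof, there is no in-paper argument to compare it with; the interlocked induction — establishing $ks=sT(s)\cdots T^{k-1}(s)$ and $\lambda_{ks}^{-1}(s)=T^{k}(s)$ simultaneously via $\lambda_{ab}=\lambda_a\lambda_b$, the identity $a+b=a\lambda_a^{-1}(b)$, and $\lambda_u^{-1}(u)=T(u)$ from Theorem \ref{istruct} — is exactly the natural route from the ingredients the paper does state (Proposition-Definition \ref{lmap}, Theorems \ref{istruct} and \ref{germ}), and each step checks out. Two remarks. First, you have in effect corrected a misprint in the statement: as printed, the last identity of (ii) reads $kds=\bigl(sT(s)\cdots T^{o-1}(s)\bigr)^{k}=(os)^{k}$, which fails whenever $o<d$ (for a square-free solution one has $T=\mathrm{id}_S$, hence $o=1$, and the printed identity would say $s^{kd}=s^{k}$ in the torsion-free group $G$); what you actually prove, $kds=(os)^{kd/o}=(ds)^{k}$ with $ds=sT(s)\cdots T^{d-1}(s)$, is the intended content and is precisely the form the paper uses later (Lemma \ref{comp}(iii) and Lemma \ref{pialg}). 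Second, your induction covers positive $k$ only, whereas (ii) is asserted for any integer $k$; this is a minor loose end rather than a flaw, since both sides are periodic in $k$ of period $d$: $T^{d}=\mathrm{id}_S$, and $\lambda_{(k+d)s}=\lambda_{ks}\lambda_{\lambda_{ks}^{-1}(ds)}=\lambda_{ks}$ by Lemma \ref{prodsum}, because $\lambda_{ks}^{-1}(ds)=d\,\lambda_{ks}^{-1}(s)\in dS\subseteq\mathrm{Soc}(G)=\mathrm{Ker}(\lambda)$ (every $\lambda_g$ permutes $S$), so the general case follows from the positive one — but a sentence to that effect should be added if the full statement is claimed.
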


As in Coxeter group, we can consider reduced word and state an Exchange lemma (see \cite{lcg} for the case of Coxeter groups):
\begin{rmk}\label{rmk:reduced} Consider a word $w=s_{i_1}\cdots s_{i_k}$ over $S$, and let $g$ be the associated element of $\overline G_l$. Then we can write $g=\sum_S g_s s$ with $0\leq g_s<ld$. Thus the word $w$ is a minimal expression of $g$ iff $k=\sum_S g_s$.

So for any $g$ in $\overline G_l$ we denote $\ell(g)=\sum_S g_s$. Then we say that a word $w$ is reduced if $k=\ell(g)$ when $w$ represents $g\in\overline G_l$.
\end{rmk}
\begin{lem}[Exchange Lemma]\label{exchange}
Let $s$ be in $S$ and $g$ in $\overline G$ . Write $g=\sum\limits_{s\in S} g_ss$ with $0\leq g_s<d$. Then either $sg$ is reduced ($\ell(gs)=\ell(g)+1$) or $g_{s*s}=d-1$ (i.e.$(d-1)(s*s)$ left-divides $g$). Moreover, if it is not reduced, then $sg=\sum\limits_{\substack{t\in S\\t\neq s}} g_{s*t}t$.

Moreover, we can go from one reduced expression to another only using the quadratic relations $s(s*t)=t(t*s)$.
\end{lem}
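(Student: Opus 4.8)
The plan is to transport the statement into the brace description of $\overline G=\overline G_1$ given by Theorems \ref{istruct} and \ref{germ}, i.e.\ $(\overline G,+,\cdot)\cong((\Z/d\Z)^S,+,\cdot)$: each $g$ is uniquely $\sum_{t\in S}g_tt$ with $0\le g_t<d$, and $\ell(g)=\sum_tg_t$ by Remark \ref{rmk:reduced}. I would use throughout $ab=a+\lambda_a(b)$ (Proposition-Definition \ref{lmap}), the additivity of each $\lambda_a$, and the identity $\lambda_s^{-1}(t)=\psi(s)(t)=s*t$ for $s,t\in S$ (Theorem \ref{istruct}), which makes $\lambda_s,\lambda_s^{-1}$ mutually inverse permutations of $S$.

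For the first assertion I would just compute $sg$. Additivity of $\lambda_s$ and the substitution $u=\lambda_s(t)$ (so $t=s*u$) give $\lambda_s(g)=\sum_tg_t\lambda_s(t)=\sum_ug_{s*u}u$, hence in $\overline G$
$$sg=s+\lambda_s(g)=\sum_{u\in S}\bigl(g_{s*u}+\delta_{u,s}\bigr)u.$$
If $g_{s*s}\le d-2$ all coefficients on the right lie in $\{0,\dots,d-1\}$, so this is the canonical writing and $\ell(sg)=1+\ell(g)$; if $g_{s*s}=d-1$ the $s$-coefficient becomes $0$ mod $d$, so $sg=\sum_{u\ne s}g_{s*u}u$ and $\ell(sg)<\ell(g)$, which is exactly the displayed formula in the non-reduced case. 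For the parenthetical, with $v=s*s$ and $h=(v^{[d-1]})^{-1}g$, from $v^{[d-1]}=(d-1)v$ I get $h=\lambda_{(d-1)v}^{-1}\bigl(g-(d-1)v\bigr)=\sum_{t\ne v}g_t\,\lambda_{(d-1)v}^{-1}(t)$; since $\lambda_{(d-1)v}$ permutes $S$ this is the canonical writing of $h$ with $\ell(h)=\ell(g)-(d-1)$, so prepending $d-1$ copies of $v$ to a reduced word for $h$ yields a reduced word for $g$, i.e.\ $(d-1)(s*s)$ left-divides $g$ (and conversely that forces $g_{s*s}\ge d-1$).

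For the ``Moreover'' I would first establish: $g$ admits a reduced expression beginning with $s$ iff $g_s\ge1$. This holds iff $\ell(s^{-1}g)=\ell(g)-1$; applying the computation above to $h:=s^{-1}g$ (with $sh=g$), the case $h_{s*s}\ne d-1$ gives $\ell(g)=\ell(h)+1$ together with $g_u=h_{s*u}$ for all $u\ne s$ (hence $g_s=h_{s*s}+1\ge1$), while $h_{s*s}=d-1$ gives $\ell(g)=\ell(h)-(d-1)$ and $g_s=0$; since $d\ge2$ these are the only, and mutually exclusive, possibilities, so the claim follows and I also record $(s^{-1}g)_{s*u}=g_u$ for $u\ne s$ when $g_s\ge1$. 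Then I would prove by induction on $\ell(g)$ that any two reduced words $w=a_1\cdots a_k$, $w'=b_1\cdots b_k$ for $g$ are linked by moves $s(s*t)\leftrightarrow t(t*s)$. If $a_1=b_1$, strip it and apply induction to $s^{-1}g$. If $a_1=s\ne t=b_1$, then $g_s,g_t\ge1$, and for $g'=s^{-1}g$ the recorded identity gives $g'_{s*t}=g_t\ge1$, so $g'$ has a reduced word $(s*t)c_3\cdots c_k$; hence $s(s*t)c_3\cdots c_k$ and $t(t*s)c_3\cdots c_k$ are reduced words for $g$ differing by one move, $w$ is linked to the former (both begin with $s$; induction on $s^{-1}g$), and $w'$ to the latter (both begin with $t$; induction on $t^{-1}g$), so $w$ and $w'$ are linked.

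The routine part is the opening computation of $sg$; the step I expect to be delicate is the induction for the ``Moreover'', and in particular the observation that a reduced word for $s^{-1}g$ can be \emph{chosen} to begin with the prescribed letter $s*t$ — that is what lets a single quadratic relation $s(s*t)=t(t*s)$ accomplish what in a Coxeter group would require the full alternating word of length $m(s,t)$.
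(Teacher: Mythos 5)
Your proof is correct, and its first half coincides with the paper's argument: both compute $sg=s+\lambda_s(g)=\sum_u\bigl(g_{s*u}+\delta_{u,s}\bigr)u$ via the reindexing $t=s*u$ and read off reducedness from whether the coefficient of $s$ stays below $d$. For the ``Moreover'' part your route is genuinely different. The paper argues that rewriting any reduced word into the (unique) additive normal form $\sum_t g_t t$ uses only the length-preserving swaps $st=\lambda_s(t)\lambda^{-1}_{\lambda_s(t)}(s)$, which are instances of the quadratic relations, and concludes from uniqueness of the additive form; how this yields an explicit chain of quadratic moves between the two given words is left rather implicit. You instead establish a left-divisibility criterion ($g$ admits a reduced word beginning with $s$ iff $g_s\ge 1$, together with $(s^{-1}g)_{s*u}=g_u$ for $u\ne s$) and run a Matsumoto-style induction on $\ell(g)$, splitting on whether the two words start with the same letter; the key observation, which you correctly isolate, is that a single quadratic move suffices because a reduced word of $s^{-1}g$ can be chosen to begin with $s*t$. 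This is more work than the paper's sketch but is self-contained and fills in exactly the step the paper glosses over. One small slip: in the non-reduced case you speak of ``prepending $d-1$ copies of $v=s*s$'' to a reduced word of $h=(v^{[d-1]})^{-1}g$; the element $v^{[d-1]}=(d-1)v$ is in general not the product $v^{d-1}$ but rather $v\,T(v)\cdots T^{d-2}(v)$ by Proposition \ref{o(T)}. Your length computation $\ell(h)=\ell(g)-(d-1)$ is what actually establishes that $(d-1)(s*s)$ left-divides $g$, so the conclusion stands; only that phrase needs adjusting.
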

\begin{proof}
As the given expression of $g$ is reduced, we know $\ell(g)=k$, i.e.$\sum\limits_{s\in S} g_s=k$. Now, by Proposition-Definition \ref{lmap} $sg=s+\lambda_s(g)=s+\sum\limits_{t\in S}g_t\lambda_s(t)$. Reindexing the sum by setting $t=\lambda^{-1}_s(u)=s*u$ for some $u\in S$, we have $g=s+\sum\limits_{u\in S}g_{s*u} u$.

This is reduced if and only if $(sg)_u<d$ for all $u$. Because $g$ is reduced, we have $g_{s*u}<d$, so this $sg$ is reduced if and only if $1+g_{s*s}<d$. Meaning that this is not reduced precisely when $g_{s*s}=d-1$. In this case, then $(sg)_s=d$, and we conclude by $ds=0$.

Moreover, assume we have two reduced expressions as $g=s_{i_1}\cdots s_{i_k}$ and $g=s_{j_1}\cdots s_{j_k}$. Using Proposition-Definition \ref{lmap}, we can rewrite both expressions as $g=\sum\limits_{s\in S} g_s s$ and this is unique by the commutativity of $(\overline G,+)$. This rewriting only involves $st=s+\lambda_s(t)=\lambda_s(t)+s=\lambda_s(t)\lambda^{-1}_{\lambda_s(t)}(s)$ which preserves length. Moreover, by Theorem \ref{istruct}, we have that the quadratic relations $s_1(s_1*s_2)=s_2(s_2*s_1)$ are equivalent to $s_1\lambda^{-1}_{s_1}(s_2)=s_2\lambda^{-1}_{s_2}(s_1)$. Letting $s=s_1$ and $s_2=\lambda_{s}(t)$, we see that $st=\lambda_s(t)\lambda^{-1}_{\lambda_s(t)}(s)$ allows us to go from one reduced expression to the other only with the quadratic relations.
\end{proof}
We conclude the preliminaries by the following technical lemma:
\begin{lem}\label{comp} For any $s,t\in S$ the followings hold: 
\begin{enumerate}[label=(\roman*)]
\item There exists $\rho_s$ with $\ell(\rho_s)=d-1$ such that $s^{[d]}=s\rho_s$. Moreover $\rho_s=(s*s)^{[d-1]}$.
\item $\psi(\rho_s)=\psi(s)^{-1}$
\item $s^{[kd]}=(s\rho_s)^k$
\item $s^{[d]}t=t(t*s)^{[d]}$
\item $\rho_s t=(s*t)\rho_{t*s}$
\item $\rho_{s*t}\rho_s=\rho_{t*s}\rho_t$
\item $(s*t)^{[d]}\rho_s=\rho_s t^{[d]}$
\end{enumerate}
For simplicity we will write $\gamma_s^k=\rho_ss^{[(k-1)d]}=(s*s)^{[kd-1]}$ (giving $s\gamma_s^k=s^{[kd]}$).
\begin{enumerate}[resume,label=\alph*)]
\item $\gamma_s^{k}t=(s*t)\gamma_{t*s}^{k}$
\item $\gamma_{s*t}^{k_1}\gamma_s^{k_2}=\gamma_{t*s}^{k_2}\gamma_t^{k_1}$
\end{enumerate}
In particular, when writing $s^{[kd]}=sg$ we have $g=(s*s)^{[kd-1]}=\rho_ss^{[(k-1)d]}=\rho_s(s\rho_s)^{k-1}$. This implies that, if $s^{[d]}=s_1\dots s_d$ then $(s^{[i]})^{[d]}=s_i\dots s_d s_1\dots s_{d-1}$.

Moreover, as all those equalities are true in $G$, they respect length and also hold in $\overline G_k$.
\end{lem}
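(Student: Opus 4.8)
The plan is to reduce every assertion to the sum/product dictionary of a brace, $ab=a+\lambda_a(b)$ and $a+b=a\lambda_a^{-1}(b)$ (Proposition-Definition~\ref{lmap}), together with the facts that $\lambda$ is a morphism, that $\lambda_s^{-1}(t)=s*t$ (Theorem~\ref{istruct}), and --- crucially --- that $s^{[d]}=ds\in\text{Soc}(G)$ by Theorem~\ref{germ}. This last point provides three tools used repeatedly: $\lambda_{s^{[d]}}=\text{id}$; for any $g\in G$ one has $s^{[d]}g=s^{[d]}+g$; and for $s,t\in S$ the elements $s^{[d]}$ and $t^{[d]}$ commute, since both $s^{[d]}t^{[d]}$ and $t^{[d]}s^{[d]}$ equal the commutative sum $s^{[d]}+t^{[d]}$. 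The only bookkeeping that requires care is to keep a Socle element on the \emph{left} when converting a product into a sum, because $\lambda_g(z)\neq z$ in general even when $z\in\text{Soc}(G)$.

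For (i) I apply Proposition~\ref{o(T)}(i) with $k=d$ to get $s^{[d]}=s\cdot(T(s)T^2(s)\cdots T^{d-1}(s))$; the bracketed word has length $d-1$ and, by the same formula applied to the generator $s*s=T(s)$, it equals $(s*s)^{[d-1]}$, so $\rho_s:=(s*s)^{[d-1]}$ works. For (ii), from $s\rho_s=s^{[d]}\in\text{Soc}(G)$ and $\lambda$ a morphism we get $\text{id}=\lambda_s\lambda_{\rho_s}$, i.e. $\lambda_{\rho_s}=\lambda_s^{-1}$, which is exactly $\psi(\rho_s)=\psi(s)^{-1}$. For (iii), $(s\rho_s)^k=(s^{[d]})^k=k\cdot s^{[d]}=s^{[kd]}$, powers of a Socle element being its multiples. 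For (iv) I compute $s^{[d]}t=s^{[d]}+t=t+s^{[d]}=t\lambda_t^{-1}(s^{[d]})=t\cdot d\,\lambda_t^{-1}(s)=t(t*s)^{[d]}$. For (v) I chain (i), (iv), (i): $\rho_st=s^{-1}s^{[d]}t=s^{-1}t(t*s)^{[d]}=s^{-1}t(t*s)\rho_{t*s}$, and then the defining relation $s(s*t)=t(t*s)$ of the structure group lets me rewrite $t(t*s)=s(s*t)$ and cancel the leading $s$.

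The substantive items are (vii), (vi) and parts a)--b), where knowing that the $\lambda$-images are equal is not enough; I expect these to be the main obstacle, the resolution being the commutativity of $s^{[d]}$ and $t^{[d]}$ noted above. For (vii) I left-multiply by $s$: applying (iv) with $s,t$ interchanged gives $s(s*t)^{[d]}=t^{[d]}s$, so $s(s*t)^{[d]}\rho_s=t^{[d]}s\rho_s=t^{[d]}s^{[d]}=s^{[d]}t^{[d]}=s\rho_st^{[d]}$, and cancelling $s$ yields $(s*t)^{[d]}\rho_s=\rho_st^{[d]}$. Then (vi) follows by left-multiplying $\rho_{s*t}\rho_s$ by $s*t$ and using (i), (vii), (i), (v) in turn: $(s*t)\rho_{s*t}\rho_s=(s*t)^{[d]}\rho_s=\rho_st^{[d]}=\rho_st\rho_t=(s*t)\rho_{t*s}\rho_t$, then cancel $s*t$.

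For the $\gamma$-identities I first record that $s\gamma_s^k=s\rho_ss^{[(k-1)d]}=s^{[d]}s^{[(k-1)d]}=s^{[kd]}$, and, using $ab=a+\lambda_a(b)$ with $\lambda_{\rho_s}=\lambda_s^{-1}$ from (ii), that $\rho_ss^{[(k-1)d]}=(s*s)^{[d-1]}+(k-1)d(s*s)=(s*s)^{[kd-1]}$; in particular $\lambda_{\gamma_s^k}=\lambda_s^{-1}$ and $s\gamma_s^k=s^{[kd]}\in\text{Soc}(G)$. Part a) is then the verbatim analogue of (v): $\gamma_s^kt=\rho_s(s^{[(k-1)d]}t)=\rho_s\,t\,(t*s)^{[(k-1)d]}=(s*t)\rho_{t*s}(t*s)^{[(k-1)d]}=(s*t)\gamma_{t*s}^k$, the middle step being the Socle computation of (iv) applied to $s^{[(k-1)d]}$. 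Part b) is the analogue of (vi)--(vii): one proves $(s*t)^{[k_1d]}\gamma_s^{k_2}=\gamma_s^{k_2}t^{[k_1d]}$ by left-multiplying by $s$ exactly as in (vii) (using $s(s*t)^{[k_1d]}=t^{[k_1d]}s$ and the commutation of the Socle elements $s^{[k_2d]}$ and $t^{[k_1d]}$), and then $(s*t)\gamma_{s*t}^{k_1}\gamma_s^{k_2}=(s*t)^{[k_1d]}\gamma_s^{k_2}=\gamma_s^{k_2}t^{[k_1d]}=\gamma_s^{k_2}t\gamma_t^{k_1}=(s*t)\gamma_{t*s}^{k_2}\gamma_t^{k_1}$ by a), after which cancelling $s*t$ gives b). Finally, the closing remarks are read off: writing $s^{[kd]}=sg$ forces $g=\gamma_s^k=\rho_s(s\rho_s)^{k-1}$ by (iii), the cyclic-rotation description of $(s^{[i]})^{[d]}$ follows from Proposition~\ref{o(T)}(i) applied to each $s_i=T^{i-1}(s)$ (with $T^d=\text{id}$ since the order of $T$ divides $d$), and since every identity above is an equality in $G$ between words of equal length, they all descend to the quotients $\overline G_l$.
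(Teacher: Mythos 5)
Your proof is correct, and for items (i)--(v), (iii), (a) and the closing remarks it runs along the same lines as the paper: the brace dictionary $ab=a+\lambda_a(b)$, the fact that $ds\in\operatorname{Soc}(G)$, and Proposition \ref{o(T)}. Where you genuinely diverge is in the treatment of (vi), (vii) and b). The paper proves (vi) \emph{directly}, by converting $\rho_{s*t}\rho_s$ into a sum and invoking the cycle-set (RC) equation with $u=s$, namely $(s*t)*(s*s)=(t*s)*(t*s)$; it then obtains (vii) by running the (iv)-type Socle computation with $\rho_s$ in place of a generator, using $\psi(\rho_s)=\psi(s)^{-1}$. You reverse the logical order: you first get (vii) by left-multiplying by $s$, using $s(s*t)^{[d]}=t^{[d]}s$ and the commutativity of the Socle elements $s^{[d]}t^{[d]}=s^{[d]}+t^{[d]}=t^{[d]}s^{[d]}$, then cancelling $s$; and you deduce (vi) purely formally from (i), (v) and (vii) after cancelling $s*t$, with the same multiply-and-cancel scheme handling b). Both routes are valid. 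Yours buys a uniform mechanism (Socle commutation plus left-cancellation in the group $G$) and never needs to invoke the RC law explicitly --- it is already encoded in the brace structure via Theorem \ref{istruct} --- whereas the paper's additive computation for (vi) makes visible exactly where the cycle-set axiom enters; your reliance on cancellation is harmless here since all identities are equalities in $G$ between words of equal length, so they still descend to $\overline G_l$ as required.
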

\begin{proof}
(i) is follows from Proposition \ref{lmap}: $s^{[d]}=s+(d-1)s=s\lambda_s^{-1}((d-1)s)$.

(ii) follows from $1=\psi(s^{[d]})=\psi(s\rho_s)=\psi(s)\psi(\rho_s)$.

(iii) and (iv) follow from the definition of $d$ as we have: $s^{[kd]}=(kd)s=k(ds)=ds\lambda^{-1}_{ds}(ds)\dots\lambda^{-1}_{(k-1)ds}(ds)=(ds)(ds)\dots(ds)=(ds)^k$, and $s^{[d]}t=ds+\lambda_{ds}(t)=t+ds=t\cdot(d\lambda^{-1}_t(s))=t\cdot d(t*s)=t(t*s)^{[d]}$.

For (v) we have $s\rho_s t=s^{[d]}t=t(t*s)^{[d]}=t(t*s)\rho_{t*s}$, applying $t(t*s)=s(s*t)$ and canceling the $s$ gives the result.

For (vi) we have $\rho_{s*t}\rho_s=\rho_{s*t}+\lambda{\rho_{s*t}}(\rho_s)=\rho_{s*t}+(d-1)\psi^{-1}(s*t)(s*s)=\rho_{s*t}+(d-1)\psi(s*t)(s*s)$, from the cycle set equation, we have $\psi(s*t)(s*s)=\psi(t*s)(t*s)$, thus $\rho_{s*t}\rho_s=\rho_{s*t}+(d-1)\psi(s*t)(s*s)=\rho_{s*t}+(d-1)\psi(t*s)(t*s)=\rho_{s*t}+\rho_{t*s}$. By symmetric, we conclude that this is equal to $\rho_{t*s}\rho_t$.

(vii) comes from (iv) applied on $\rho_t=(t*t)^{[d-1]}$ and $\psi(\rho_t)=\psi(t)^{-1}$.

(viii) is deduced from the previous ones: $\gamma_s^{k}t=\rho_ss^{[kd]}t=\rho_s t (t*s)^{[kd]}=(s*t)\rho_{t*s}(t*s)^{[kd]}=(s*t)\gamma_{t*s}^k$

Similarly for (ix): $\gamma_{s*t}^{k_1}\gamma_s^{k_2}=\rho_{s*t}(s*t)^{[k_1d]} \rho_ss^{[k_2d]}=\rho_{s*t}\rho_s t^{[k_1d]}s^{[k_2d]}=\rho_{t*s}\rho_ts^{[k_2d]}t^{[k_1d]}=\rho_{t*s}(t*s)^{[k_2d]}\rho_tt^{[k_1d]}=\gamma_{t*s}^{k_2}\gamma_t^{k_1}$.
\end{proof}
\section{Defining the Hecke algebra}
We fix a cycle set $(S,*)$ of size $n$, of Dehornoy's class $d$, with structure group $G$ and germ $\overline G_l=G/\langle lds\rangle$ for some positive integer $l$.

Recall that, by Theorem \ref{istruct} we have a set bijection, more precisely a bijective 1-cocycle, $\text{cp}\colon G\to\Z^n$. The inverse of this bijective 1-cocycle is also a bijective 1-cocycle $\text{cp}^{-1}=\Pi\colon\mathbb Z^n\to G$: we have $\Pi(gh)=\Pi(g)\lambda_{\Pi(g)}^{-1}(\Pi(h))$. In particular, if $\psi(\Pi(g))=1$, then $\Pi(gh)=\Pi(g)\Pi(h)$. Moreover, by Theorem \ref{germ} $\Pi$ induces a bijective 1-cocycle $\overline\Pi\colon(\mathbb Z/ld\mathbb Z)^n\to\overline G_l$ 

Let $R$ be a ring, and note that $R[\mathbb Z^n]=R[X_1^{\pm 1},\dots,X_n^{\pm 1}]$ by identifying the generator $e_i=(0,\dots,0,1,0,\dots,0)$ with $X_i$. The set map $\Pi$ extends linearly to $R[X_1^{\pm 1},\dots,X_n^{\pm 1}]\to R[G]$, sending $\sum\limits_i r_i X_1^{i_1}\dots X_n^{i_n}$ to $\sum\limits_i r_i \Pi(s_1,\dots,s_1,\dots,s_n,\dots,s_n)$ for some finite indices $i$ and corresponding integers $i_1,\dots,i_n$ and coefficients $r_i$.

We now proceed to construct the Hecke algebra as hinted before: we pick a polynomial, apply it to $s^{[d]}$ and use the 1-cocycle $\Z^n\to G$ to show that we have a basis by showing that the quotients of the associated group rings by appropriate ideals have the same dimensions.

From now on, fix a polynomial $P\in R[X]$ of degree $l>0$ and set $P(X)=\sum\limits_{k=0}^l a_kX^k$.

\begin{rmk}\label{rmk:ideal}
Recall that given an algebra $A$ and $R\subseteq A$, elements of the two sided ideal generated by $R$ are of the form $\sum a_ir_ib_i$, a finite sum where $a_i,b_i\in A,r_i\in R$.
\end{rmk}
\begin{lem}\label{pialg}
Consider the two-sided ideals $I_P=\left(P(X_1^d),\dots,P(X_n^d)\right)\subset R[\mathbb Z^n]$ and $J_P=\left(P(s_1^{[d]}),\dots,P(s_n^{[d]})\right)\subset R[G]$. Then $\Pi$ induces a bijection $I_P\to J_P$.
\end{lem}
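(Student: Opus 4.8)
The plan is to observe that the $R$-linear extension $\Pi\colon R[\mathbb{Z}^n]\to R[G]$ of the bijective $1$-cocycle is an isomorphism of $R$-modules — it carries the group basis $\mathbb{Z}^n$ of the source bijectively onto the group basis $G$ of the target — so the statement amounts to showing that this module isomorphism sends the submodule $I_P$ onto the submodule $J_P$. Note that $\Pi$ is \emph{not} a ring homomorphism, since it is not a group homomorphism, so the argument must be carried out by hand on spanning sets. As $R[\mathbb{Z}^n]$ is commutative, $I_P$ is the $R$-span of the elements $X^a P(X_i^d)$ with $a\in\mathbb{Z}^n$ and $1\le i\le n$; and by Remark \ref{rmk:ideal}, expanding a general element of the two-sided ideal in the group basis of $R[G]$ shows that $J_P$ is the $R$-span of the elements $g\,P(s_i^{[d]})\,h$ with $g,h\in G$ and $1\le i\le n$. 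It therefore suffices to compare the images of these two spanning families.

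The key computation rests on three facts from the preliminaries: $\Pi(k e_i)=s_i^{[k]}$ (immediate from $\Pi(e_i)=s_i$, the cocycle identity, and Proposition \ref{o(T)}); $s_i^{[d]}\in\text{Soc}(G)$, whence $s_i^{[kd]}=(s_i^{[d]})^k$ for every $k$ (Theorem \ref{germ} and Lemma \ref{comp}(i),(iii)); and the fact that for every $g\in G$ the automorphism $\lambda_g$ permutes the additive basis $S$ of $(G,+)$ — this follows from $\lambda_{s_i}^{-1}(s_j)=s_i*s_j\in S$ (Theorem \ref{istruct}), from $\lambda$ being a homomorphism $(G,\cdot)\to\text{Aut}(G,+)$, and from $S$ generating $G$. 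Applying the cocycle identity $\Pi(a+b)=\Pi(a)\,\lambda_{\Pi(a)}^{-1}(\Pi(b))$ with $b=kd\,e_i$ gives
$$\Pi(a+kd\,e_i)=\Pi(a)\,\lambda_{\Pi(a)}^{-1}\!\bigl(s_i^{[kd]}\bigr)=\Pi(a)\,\bigl(s_{\sigma_a(i)}^{[d]}\bigr)^{k},$$
where $\sigma_a\in\mathfrak{S}_n$ is the permutation induced by $\lambda_{\Pi(a)}^{-1}$ on $S$ and, crucially, does not depend on $k$ (here one uses that $\lambda_{\Pi(a)}^{-1}$ is additive, hence commutes with multiplication by $kd$). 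Summing against the coefficients of $P$ yields $\Pi\bigl(X^aP(X_i^d)\bigr)=\Pi(a)\,P\bigl(s_{\sigma_a(i)}^{[d]}\bigr)$, which lies in $J_P$; so $\Pi(I_P)\subseteq J_P$, and since $\sigma_a$ is a bijection, letting $i$ range shows conversely that every element $\Pi(a)\,P(s_j^{[d]})$ with $j$ arbitrary lies in $\Pi(I_P)$.

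For the remaining inclusion $J_P\subseteq\Pi(I_P)$ I would show that each spanning element $g\,P(s_i^{[d]})\,h$ lies in $\Pi(I_P)$. Since $s_i^{[kd]}\in\text{Soc}(G)$, the brace identity gives $s_i^{[kd]}\,h=s_i^{[kd]}+h=h\,\lambda_h^{-1}(s_i^{[kd]})=h\,s_{m}^{[kd]}$, where $s_m=\lambda_h^{-1}(s_i)$, so again $m=m(h,i)$ is independent of $k$; summing against the coefficients of $P$ gives $P(s_i^{[d]})\,h=h\,P(s_m^{[d]})$ and hence $g\,P(s_i^{[d]})\,h=(gh)\,P(s_m^{[d]})$, which is of the form $\Pi(a)\,P(s_j^{[d]})$ and so lies in $\Pi(I_P)$ by the previous paragraph. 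Combining the two inclusions gives $\Pi(I_P)=J_P$, and the restriction of the bijection $\Pi$ then provides the claimed bijection $I_P\to J_P$.

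The delicate point is precisely the bookkeeping with the permutations $\sigma_a$ and $m(h,i)$: the argument works only because the relabelling of the index $i$ caused by moving a power of $s_i^{[d]}$ past a group element is governed by a permutation of $S$ that is independent of the exponent $k$, which is exactly what lets $\sum_k a_k(\,\cdot\,)^k$ be reassembled into $P(\,\cdot\,)$ on the other side. The structural reason behind this is that $s_i^{[d]}$ lies in the socle — so $\lambda$ does not detect its powers — and that $\lambda_g$ acts on $(G,+)$ by permuting the basis $S$, and in particular commutes with multiplication by $d$.
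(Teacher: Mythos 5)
Your proof is correct and follows essentially the same route as the paper: both reduce to spanning elements and exploit the cocycle identity together with the facts that $s^{[kd]}=(s^{[d]})^k$ and that $s^{[kd]}$ lies in the socle, so that commuting it past a group element merely relabels the generator by a permutation of $S$ independent of the exponent $k$. The only cosmetic difference is that you normalize the spanning elements of $J_P$ to the form $(gh)P(s_m^{[d]})$ inside $R[G]$ and reuse the surjectivity onto all indices from the first inclusion, whereas the paper moves the socle factor to the left and applies $\Pi^{-1}$ directly.
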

\begin{proof}
First remark that $P$ sends a set of generators of $I_P$ to a set of generators of $J_P$: $$\Pi(P(X_i^d))=\Pi\left(\sum a_k X_i^{kd}\right)=\sum a_k\Pi\left(X_i^{kd}\right)=\sum a_k s_i^{[kd]}=\sum a_k (s_i^{[d]})^k=P(s_i^{[d]})$$ where we use that $S$ is of class $d$ with Proposition \ref{o(T)} to have $s_i^{[kd]}=(s_i^{[d]})^k$.

As $\Pi\colon \Z^n\to G$ is bijective, its linearization $\Pi\colon R[X_1,\dots,X_n]\to R[G]$ is bijective. But $\Pi$ is not a morphism (only a bijective 1-cocycle), so we can't deduce that $\Pi(P(X_i^d))=P(s_i^{[d]})$ to obtain $\Pi(I_P)\subseteq J_P$. However, we'll use that $\Pi$ is a 1-cocycle and $S$ is of class $d$, to deduce that, for any $1\leq i\leq n$ and any $f\in R[\mathbb Z^n]$, we have $\Pi(X_i^df)=\Pi(X_i^d)\cdot \lambda_{\Pi(X_i^d)}^{-1}(\Pi(f))=s_i^{[d]}\Pi(f)$.

We'll prove that $\Pi(I_P)=J_P$ by double inclusion: 

Let $Q_1,Q_2\in R[\mathbb Z^n]$. By the commutativity of $R[\mathbb Z^n]=R[X_1,\dots,X_n]$, we have that $Q_1P(X_i^d)Q_2=P(X_i^d)Q_1Q_2$ for any $1\leq i\leq n$. Moreover, as $S$ is of class $d$ and $\Pi$ is a 1-cocycle, we have $\Pi\left(X_i^d(X_1^{b_1}\dots X_n^{b_n})\right)=s_i^{[d]}\Pi(X_1^{b_1}\dots X_n^{b_n})$. Thus $\Pi(Q_1P(X_i^d)Q_2)=\Pi(P(X_i^d))\Pi(Q_1Q_2)=P(s_i^{[d]})\Pi(Q_1Q_2)$, which is in $J_P$ as $J_P$ is an ideal. So we have $\Pi(I_P)\subseteq J_P$.

Now let $f,g\in G$. Then, by Lemma~\ref{ybe}, we have for all $g\in G$ that $gs^{[d]}=\lambda_g(s^{[d]})\lambda_{s^{[d]}}(g)=\psi(g)(s^{[d]})g$. Thus, in $R[G]$, we have $$fP(s_i^{[d]})g=\sum a_k fs_i^{[dk]}g=\sum a_k (\psi(f)^{-1}(s_i))^{[dk]}fg.$$ Write $t=(\psi(f)^{-1}(s_i))$ and let $Y\in\{X_1,\dots,X_n\}$ be such that $\Pi(Y)=t$. As $S$ is of class $d$, we have $\Pi^{-1}(fP(s_i^{[d]})g)=\sum a_k \Pi^{-1}(fs^{[dk]}g)=\sum a_k Y^{dk}\Pi^{-1}(fg)=P(Y^d)\Pi^{-1}(fg)$, which is in $I_P$ by Remark \ref{rmk:ideal}. We conclude that $J_P\subseteq \Pi(I_P)$.
\end{proof}
\begin{ex}
Let $P(X)=1+X$, $g\in G$ and $s\in S$. Write $\Pi(X)=s,Q=\Pi^{-1}(g)$, $t=\psi(g)^{-1}(s)$ and $Y=\Pi^{-1}(t)$. Then $(1+g)(1+s^{[d]})=1+s^{[d]}+g+gs^{[d]}=1+s^{[d]}+g+t^{[d]}g=(1+s^{[d]})+(1+t^{[d]})g=P(s^{[d]})+P(t^{[d]})g=\Pi(P(X^d))+\Pi(Y^d)\Pi(Q)=\Pi(P(X^d)+P(Y^d)Q)$. 

Thus $(1+g)(1+s^{[d]})$ is an element of $(P(s))\subset J_P$, with preimage $P(X^d)+P(Y^d)Q$ in $(P(X^d),P(Y^d))\subset I_P$.
\end{ex}
The following examples highlight why we need to take polynomials in $X^d$:
\begin{ex}
Let $(S,*)$ be the cycle set, with $S=\{s,t,u\}$ and $\psi(s)=\psi(t)=\psi(u)=(stu)=\sigma$. 
Then $S$ is of class 3 and $s^{[3]}=stu,t^{[3]}=tus,u^{[3]}=ust$. Write $R[\Z^3]=R[X,Y,Z]$ where $\Pi(X)=s,\Pi(Y)=t,\Pi(Z)=u$. Let $P(x)=1+x^2$ and consider the ideals $I=(P(X^2),P(Y^2),P(Z^2))$ and $J=(P(s^{[2]}),P(t^{[2]}),P(u^{[2]}))$. 

Note that, for $T_i\in\{X,Y,Z\}$ with $1\leq i\leq k$, $$\Pi(T_1\cdots T_k)=\Pi(T_1)\cdot \sigma(\Pi(T_2))\cdots \cdot\sigma^{k-1}(\Pi(T_k))$$ as $\Pi$ is a 1-cocycle. Or equivalently, for $t_i\in\{s,t,u\}$, $$\Pi^{-1}(t_1\dots t_k)=\Pi^{-1}(t_1)\Pi^{-1}(\sigma^{-1}(t_2))\cdots \Pi^{-1}(\sigma^{-k+1}(t_k)).$$

Now the element $f=tt+tstt=t(1+st)t\in R[G]$ is in $J$, as $P(s^{[2]})=1+s^{[2]}=1+st$. However, $\Pi^{-1}(tt)=\Pi^{-1}(t)\Pi^{-1}(\sigma^{-1}(t))=\Pi^{-1}(t)\Pi^{-1}(s)=YX$, and similarly $\Pi^{-1}(tstt)=t\Pi^{-1}(u)\Pi^{-1}(u)\Pi^{-1}(t)=YZZY$. Thus $\Pi^{-1}(f)=YX+YZZY=XY+Y^2Z^2$, and we claim that this is not an element of $J$. 

To check that $XY+Y^2Z^2\not\in J$, suppose $XY+Y^2Z^2=a(1+X^2)+b(1+Y^2)+c(1+Z^2)$ with $a,b,c\in R[X,Y,Z]$. As we have no $X^2$ terms, we deduce $a=0$, thus $XY+Y^2Z^2=b(1+Y^2)+c(1+Z^2)$. We have a $XY$ which contains no square term, meaning that $XY$ appears in $b$ or $c$. But there is no $X$ in $Y^2Z^2$, a contradiction.
 
We took polynomials in $X^2$ instead of $X^3$, and now an element of $I$ does not come from $J$. Thus the use of polynomials in $X^d$.

On the other hand , if instead of $st$ we had an element $g$ with trivial permutation (such as $g=stu$), we would have $\Pi^{-1}(t(1+g)t)\in J$. Indeed, $t(1+g)t=t\textcolor{red}{t}+tg\textcolor{blue}{t}$, and as $g$ has trivial permutation, the preimage of the blue $\textcolor{blue}{t}$ would have been the same as the preimage of the red $\textcolor{red}{t}$, allowing for factorization by $\Pi^{-1}(\textcolor{red}{t}\textcolor{blue}{t})$. But with $st$, the blue $\textcolor{blue}{t}$ gets acted on, preventing a factorization. 
\end{ex}
From now on, we fix $P$ in $R[X]$ of degree $l>0$. We furthermore assume that $a_l$, the leading coefficient of $P$, is invertible. We also fix the ideals $I_P\subset R[\Z^n]$ and $J_p\subset R[G]$ as in Lemma \ref{pialg}.

Let $\mathcal H(S,P)=R[G]/J_P$. In $\mathcal H(S,P)$ we thus have that 
\begin{equation}\label{eq:hecke_rel}
\mathcal H(S,P)=R[G]/\left(T_{s^{[ld]}}=\sum_{k=0}^{l-1} \frac{-a_k}{a_l} T_{s^{[kd]}}\right).
\end{equation}

To distinguish between elements of $G$ and their corresponding generator of the algebra, we will write $R[G]=R\langle T_g, g\in G\mid T_gT_h=T_{gh}\rangle$.

\begin{lem}\label{gen} The followings hold:
\begin{enumerate}[label=(\roman*)]
\item We have the isomorphism $R[G]\cong R\langle T_s,s\in S\mid T_sT_{s*t}=T_tT_{t*s},\forall s,t\in S\rangle$
\item For any $\overline{g}\in\overline{G}_l$, there is a well-defined element $T_{\overline{g}}\in\mathcal H(S,P)$ such that $T_{\overline{g}}=T_{s_{i_1}}\cdots T_{s_{i_r}}$ whenever $s_{i_1}\cdots s_{i_r}$ ($s_i\in S$) is a reduced expression of $\overline g$ in $\overline G_l$.
\item For any $g\in G$ with image $\overline g\in\overline G$, if $\ell(g)=\overline\ell(\overline g)$, then the projection $R[G]\to\mathcal H(S,P)$ sends $T_g$ to $T_{\overline g}$.
\end{enumerate}
\end{lem}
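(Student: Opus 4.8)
The plan is to prove (i) from the group presentation \eqref{RCG} of $G$, and then deduce (ii) and (iii) from it together with the Exchange Lemma (Lemma~\ref{exchange}).

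For (i), I would note that it is essentially a reformulation: the group algebra of a group given by a presentation $\langle X\mid R\rangle$ is the $R$-algebra with the ``same'' presentation (generators the $T_x$, taken invertible, and relations the elements of $R$ written multiplicatively). Applying this to \eqref{RCG} and discarding the trivial relations $T_sT_{s*s}=T_sT_{s*s}$ yields (i). Concretely, $T_s\mapsto T_s$ defines a surjection from the presented algebra onto $R[G]$ (the relations hold in $R[G]$ and $S$ generates $G$), and reading off a word over $S\cup S^{-1}$ for each $g\in G$ provides a two-sided inverse of this map.

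For (ii), fix $\overline g\in\overline G_l$. By Remark~\ref{rmk:reduced}, the reduced expressions of $\overline g$ are exactly the words over $S$ representing $\overline g$ of minimal length $\overline\ell(\overline g)$. By the final assertion of Lemma~\ref{exchange}, any two of them are connected by a finite chain of substitutions replacing a factor $s(s*t)$ by $t(t*s)$ (or vice versa), with every intermediate word reduced since these relations preserve length. Read inside the algebra, such a substitution turns a factor $T_sT_{s*t}$ into $T_tT_{t*s}$, which is an identity in $R[G]$ by (i), hence in $\mathcal H(S,P)$. Therefore the product $T_{s_{i_1}}\cdots T_{s_{i_r}}$ is the same for every reduced expression of $\overline g$, and we take $T_{\overline g}\in\mathcal H(S,P)$ to be this common value; equivalently, it is the image of $T_{g_0}$ for the canonical lift $g_0=\Pi(\overline g_1,\dots,\overline g_n)\in M_S\subseteq G$.

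For (iii), let $g\in G$ with $\ell(g)=\overline\ell(\overline g)$, and choose a reduced expression $w=s_{i_1}\cdots s_{i_k}$ of $g$, so $k=\ell(g)$. The same word $w$ represents $\overline g$ in $\overline G_l$ and has length $k=\ell(g)=\overline\ell(\overline g)$, so by Remark~\ref{rmk:reduced} it is a reduced expression of $\overline g$; hence by (ii), $T_{\overline g}=T_{s_{i_1}}\cdots T_{s_{i_k}}$ in $\mathcal H(S,P)$, which is precisely the image of $T_g=T_{s_{i_1}}\cdots T_{s_{i_k}}\in R[G]$ under the projection $R[G]\to\mathcal H(S,P)$. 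I expect the only genuinely non-formal point to be in (ii): making sure the ``connected by the quadratic relations only'' conclusion of the Exchange Lemma is available for reduced words of a fixed element of $\overline G_l$ for general $l$ (its proof uses only uniqueness of the additive normal form and the length-preserving rewrite $st=\lambda_s(t)\lambda^{-1}_{\lambda_s(t)}(s)$, so it applies verbatim), and then transporting it to an equality in $R[G]$ via (i); everything else is bookkeeping about the coordinate normal form and the meaning of $\ell$.
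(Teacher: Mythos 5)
Your proof is correct and takes essentially the same route as the paper: (i) by reading the group presentation \eqref{RCG} as an algebra presentation, (ii) by combining the final assertion of the Exchange Lemma (reduced expressions are connected by the quadratic relations alone) with (i), and (iii) by noting that a reduced expression of $g$ is also a reduced expression of $\overline g$ when $\ell(g)=\overline\ell(\overline g)$ and invoking (ii). Your added care about taking the generators $T_s$ invertible in (i) and about the Exchange Lemma applying verbatim in $\overline G_l$ for general $l$ only makes the argument slightly more precise than the paper's own proof.
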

\begin{proof}
(i) follows from the definition of the group ring $R[G]$ as the free module with basis $G$ such that $T_gT_h=T_{gh}$ for any $g,h$ in $G$.

For (ii), the Exchange Lemma \ref{exchange} tells us that we can go from one reduced expression to another only using the quadratic relations. By (i) those quadratic relations are also the defining relations of a presentation of $\mathcal H(S,P)$. Thus $T_g$ does not depend on the choice of a reduced expression.

Finally, for (iii), let $g\in G$ and write $g=s_{i_1}\cdots s_{i_r}$ so that $\ell(g)=r$. Let $\overline g$ be the projection of $g$ in $\overline G$, and assume that $\ell(g)=\overline\ell(\overline g)=r$. Then $\overline{s_{i_1}}\cdots\overline{s_{i_r}}$ is a reduced expression of $\overline g$ in $\overline G$. Thus, by (ii), $T_{\overline g}=T_{\overline{s_{i_1}}}\cdots T_{\overline{s_{i_r}}}$ is the projection of $T_g$.
\end{proof}
Recall that,by Lemma \ref{comp}, we have $s^{[ld]}=s\cdot (s*s)^{[ld-1]}$ . Thus, Equation (\ref{eq:hecke_rel}) means that, in $\mathcal H(S,P)$ we have
\begin{equation}
\label{eq:hecke_rel_germ}
T_s T_{s*s}^{[ld-1]}=\sum_{k=0}^{l-1} \frac{-a_k}{a_l} T_{s^{[kd]}}.
\end{equation}
Even though $T_s^{[ld]}$ is not defined in $\mathcal H(S,P)$ from Lemma \ref{gen}, we will often abuse notation and write $T_s^{[ld]}$ instead of $T_s T_{s*s}^{[ld-1]}$ in $\mathcal H(S,P)$. 
\begin{lem}
As an $R$-module, $\mathcal H(S,P)$ is generated by $\{T_g\}_{g\in\overline G_l}$.
\end{lem}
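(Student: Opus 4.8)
The plan is to carry the statement across the $R$-linear bijection provided by Lemma~\ref{pialg}. The linearisation $\Pi\colon R[\Z^n]\to R[G]$ of the bijective $1$-cocycle is an $R$-linear bijection, and by Lemma~\ref{pialg} it sends $I_P$ onto $J_P$; hence it induces an $R$-linear isomorphism $R[\Z^n]/I_P\cong R[G]/J_P=\mathcal H(S,P)$. Thus it suffices to prove that $R[\Z^n]/I_P$ is spanned over $R$ by the classes of the monomials $X^{\mathbf a}=X_1^{a_1}\cdots X_n^{a_n}$ with $0\le a_i<ld$ for all $i$, and then to check that these classes are sent under the isomorphism exactly to $\{T_{\overline g}\}_{\overline g\in\overline G_l}$.

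For the spanning statement, I would let $N'\subseteq R[\Z^n]/I_P$ be the $R$-submodule spanned by the classes of $\{X^{\mathbf a}\colon 0\le a_i<ld\}$, and show $N'$ is stable under multiplication by each $\overline{X_i}$ and each $\overline{X_i}^{-1}$. Stability under $\overline{X_i}$: multiplying $X^{\mathbf a}$ by $X_i$ again gives an allowed monomial when $a_i+1<ld$, while for $a_i=ld-1$ one rewrites $X_i^{ld}$ modulo $I_P$ using the relation $\sum_{k=0}^{l}a_kX_i^{kd}\equiv 0$ together with the invertibility of $a_l$, getting $X_i^{ld}\equiv-\sum_{k=0}^{l-1}a_l^{-1}a_kX_i^{kd}\in N'$. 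Stability under $\overline{X_i}^{-1}$: multiplying $X^{\mathbf a}$ by $X_i^{-1}$ gives an allowed monomial when $a_i\ge 1$, so the remaining case is $a_i=0$, and by multiplying by the other (invertible) variables it suffices to show $\overline{X_i^{-1}}\in N'$; for this, multiply $\sum_{k=0}^{l}a_kX_i^{kd}\equiv 0$ by $X_i^{-1}$ to obtain $a_0X_i^{-1}\equiv-\sum_{k=1}^{l}a_kX_i^{kd-1}$, whose right-hand side is an $R$-combination of the monomials $X_i^{kd-1}$ with $0\le kd-1\le ld-1$, and conclude using that $a_0$ is invertible. Since $1=\overline{X^{\mathbf 0}}\in N'$ and $R[\Z^n]/I_P$ is generated as an $R$-algebra by the $\overline{X_i}$ and the $\overline{X_i}^{-1}$, every $\overline{X^{\mathbf b}}$ (for $\mathbf b\in\Z^n$) is obtained from $1$ by successive multiplications by these, hence lies in $N'$; as the $X^{\mathbf b}$ span $R[\Z^n]$ over $R$, this forces $N'=R[\Z^n]/I_P$.

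It then remains to identify the images of the chosen monomials. By construction $\Pi(X^{\mathbf a})=T_g$ where $g\in G$ has $\text{cp}(g)=\mathbf a$; since $0\le a_i<ld$, Remark~\ref{rmk:reduced} gives $\ell(g)=\sum_i a_i=\overline\ell(\overline g)$, so by Lemma~\ref{gen}(iii) the image of $T_g$ in $\mathcal H(S,P)$ is $T_{\overline g}$. Finally, the bijective $1$-cocycle $\overline\Pi$ of Theorem~\ref{germ} identifies the set of $g\in G$ with $\text{cp}(g)\in\{0,\dots,ld-1\}^n$ with $\overline G_l$, so the classes of the monomials $X^{\mathbf a}$ with $0\le a_i<ld$ are carried precisely onto $\{T_{\overline g}\}_{\overline g\in\overline G_l}$, which is therefore a generating set of $\mathcal H(S,P)$ as an $R$-module.

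The only subtle step — and the one requiring more than invertibility of the leading coefficient — is the reduction of the negative exponents $\overline{X_i^{-1}}$ to $N'$: this cannot be done through the relation coming from $P$ alone, and genuinely uses that the constant coefficient $a_0$ is invertible (as it is, e.g., for $P(X)=X^2-(q-1)X-q$, where $a_0=-q$), equivalently that $X_i$ is a unit modulo $P(X_i^d)$. Everything else is routine rewriting of monomials.
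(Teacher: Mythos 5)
Your proposal is correct under its stated hypotheses, but it takes a genuinely different route from the paper. The paper argues entirely inside $\mathcal H(S,P)$: by Remark \ref{rmk:reduced} and the Exchange Lemma \ref{exchange}, for $s\in S$ and $g\in\overline G_l$ the product $T_sT_g$ is either $T_{sg}$ (if $sg$ is reduced) or, via Equation (\ref{eq:hecke_rel_germ}), an $R$-combination of elements $T_{s^{[kd]}h}$ with $k<l$; thus the span of $\{T_g\}_{g\in\overline G_l}$ is stable under left multiplication by the generators. You instead transport the statement through the $R$-linear isomorphism $R[\Z^n]/I_P\cong\mathcal H(S,P)$ furnished by Lemma \ref{pialg}, prove the spanning statement on the commutative side (a Laurent-ring version of Lemma \ref{polyquo}), and identify the images of the reduced monomials with the $T_{\overline g}$ via Remark \ref{rmk:reduced} and Lemma \ref{gen}(iii); combined with the freeness in Lemma \ref{polyquo}, this essentially yields Theorem \ref{hecke-dim} in one stroke. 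The cost is that you use invertibility of $a_0$, which is not among the standing assumptions at this point of the paper (only $a_l$ is assumed invertible there), though it is the hypothesis announced in the introduction (``leading and constant coefficients invertible'') and holds for $P(X)=X^2-(q-1)X-q$ over $R[q^{\pm1}]$. Your closing remark is in fact a good catch rather than a defect: since $R[G]$ is spanned by all of $G$, including inverses of the generators, some such hypothesis is genuinely needed (for the one-element cycle set with $R=\Z$ and $P(X)=X-2$ one gets $\mathcal H\cong\Z[1/2]$, not spanned by $\{T_g\}$), whereas the paper's rewriting only treats left multiplication by positive generators and would have to invoke the inverse formula of Proposition \ref{prop:hecke_inv}, which likewise requires $a_0$ invertible.
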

In particular, this means that $\mathcal H(S,P)$ is finite dimensional, and that its dimension is bounded above by $\#\overline G_l=(ld)^n$.
\begin{proof}
Let $s\in S$ and $g\in \overline G_l$. By Remark \ref{rmk:reduced}, either $sg$ is reduced and then $T_sT_g=T_{sg}$, or it is not reduced and $(s*s)^{[ld-1]}\prec g$ ($g=(s*s)^{[ld-1]}h$ is reduced in $\overline G$) by Lemma \ref{exchange}. Thus, if $sg$ is not reduced, by Equation -\ref{eq:hecke_rel_germ}), we have $T_sT_g=T_{s}T_{s*s}^{[ld-1]}T_h=\sum_{k=0}^{l-1} \frac{-a_k}{a_l} T_{s^{[kd]}h}$, where $s^{[kd]}h$ is reduced in $\overline G_l$ as $k<l$ and $g=s^{[ld-1]}h$ is reduced.
\end{proof}
\begin{lem}\label{polyquo}
The quotient algebra $R[\mathbb Z^n]/I$ is a free $R$-module of dimension $(ld)^n$ and basis $X_1^{j_1}\dots X_n^{j_n}$ with $0\leq j_1,\dots,j_n<ld$. 

Moreover, the linearization of $\overline\Pi$ provides a bijection between this basis and $\overline G_l$. 
\end{lem}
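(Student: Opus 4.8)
The plan is to reduce the statement to the one-variable case and then to a standard fact about a polynomial ring modulo a monic polynomial. Write $R[\mathbb Z^n]=R[X_1^{\pm 1},\dots,X_n^{\pm 1}]=R[X_1^{\pm 1}]\otimes_R\cdots\otimes_R R[X_n^{\pm 1}]$. Since $I=I_P$ is generated by the elements $P(X_i^d)$ and each $P(X_i^d)$ lies in the $i$-th tensor factor $R[X_i^{\pm 1}]$, right-exactness of $\otimes$ gives an isomorphism of $R$-algebras
$$R[\mathbb Z^n]/I\;\cong\;\bigotimes_{i=1}^n R[X_i^{\pm 1}]\big/\bigl(P(X_i^d)\bigr),$$
under which the class of $X_1^{j_1}\cdots X_n^{j_n}$ corresponds to the pure tensor of the classes of the $X_i^{j_i}$. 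So it suffices to show that a single factor $A\coloneqq R[X^{\pm 1}]/(P(X^d))$ is a free $R$-module with basis $1,X,\dots,X^{ld-1}$; tensoring then exhibits $R[\mathbb Z^n]/I$ as free with basis $\{X_1^{j_1}\cdots X_n^{j_n}\mid 0\le j_i<ld\}$, which has $(ld)^n$ elements.

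To handle $A$, I would first note that the image $\overline X$ of $X$ is already invertible. Writing $P(X^d)=a_0+X^d\bigl(a_1+a_2X^d+\dots+a_lX^{(l-1)d}\bigr)$ and using that the constant term $a_0$ is a unit (which, like the invertibility of $a_l$, we assume), one gets $X^d\cdot\bigl(-a_0^{-1}(a_1+a_2X^d+\dots+a_lX^{(l-1)d})\bigr)\equiv 1\pmod{P(X^d)}$ already in $R[X]/(P(X^d))$, so $\overline X$ is a unit there. Since $R[X^{\pm 1}]$ is the localization of $R[X]$ at $\{1,X,X^2,\dots\}$ and localization commutes with quotients, this yields $R[X^{\pm 1}]/(P(X^d))\cong\bigl(R[X]/(P(X^d))\bigr)[\overline X^{\,-1}]=R[X]/(P(X^d))$. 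Finally, as $a_l$ is invertible we have $(P(X^d))=(a_l^{-1}P(X^d))$, and $a_l^{-1}P(X^d)$ is a genuine monic polynomial of degree $ld$; polynomial division by it shows $R[X]/(P(X^d))$ is free over $R$ with basis $1,X,\dots,X^{ld-1}$, completing the reduction.

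For the last assertion I would combine this with Theorem \ref{germ}. The basis just produced is naturally indexed by tuples $(j_1,\dots,j_n)$ with $0\le j_i<ld$, i.e.\ by $(\mathbb Z/ld\mathbb Z)^n$, and by Theorem \ref{germ} the bijective $1$-cocycle $\overline\Pi\colon(\mathbb Z/ld\mathbb Z)^n\to\overline G_l$ carries this index set bijectively onto $\overline G_l$. Tracing through the definition of $\Pi$ (hence of $\overline\Pi$) on a monomial $X_1^{j_1}\cdots X_n^{j_n}$ then identifies the resulting bijection with the linearization of $\overline\Pi$ restricted to the basis.

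The tensor-product splitting and the description of a polynomial ring modulo a monic polynomial are routine, so the only delicate point I anticipate is the passage from Laurent polynomials to ordinary polynomials: this is where invertibility of $a_0$ is genuinely needed — without it the rank may drop, as already $P(X)=X^2+X$ with $d=1$ shows, where $R[X^{\pm1}]/(X^2+X)\cong R$ — and I would regard that step as the crux of the argument.
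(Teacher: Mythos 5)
Your proof is correct, and at its core it rests on the same computation as the paper's: after scaling by $a_l^{-1}$, each $P(X_i^d)$ is monic of degree $ld$, and division by it reduces every exponent into $\{0,\dots,ld-1\}$; the bijection with $\overline G_l$ is then exactly the paper's appeal to Theorem \ref{germ}. The organizational difference is your tensor-product splitting into one-variable factors, which is a clean (and correct) way to package the multivariate reduction that the paper performs directly. The more substantive difference is your handling of the Laurent ring: the paper's proof only reduces monomials of nonnegative degree, i.e.\ it literally establishes the claim for $R[X_1,\dots,X_n]/I_P$ and passes tacitly to $R[\Z^n]=R[X_1^{\pm1},\dots,X_n^{\pm1}]$. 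As your example $P(X)=X^2+X$, $d=1$ (where $R[X^{\pm1}]/(X^2+X)\cong R$ has rank $1$, not $2$) shows, that passage is not automatic: one needs the class of $X_i$ to be invertible already in the polynomial quotient, which is exactly where invertibility of $a_0$ enters, via your localization argument. That hypothesis is not stated in this section (only $a_l$ is assumed invertible there), but it is part of the standing assumptions announced in the introduction (``leading and constant coefficients invertible'') and reappears in Proposition \ref{prop:hecke_inv}; your proof makes its role explicit and shows it is genuinely needed for the lemma as stated over $R[\Z^n]$ (and hence for the rank count feeding into Theorem \ref{hecke-dim}). So: same essential mechanism, but your version supplies a detail the paper's two-line proof glosses over.
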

The bijection $\Pi$ allows us to write an abuse of notation: by $T_s^{[d]}$ we will mean $T_{s^{[d]}}$.
\begin{proof}
When quotienting $R[X_1,\dots,X_n]$ by $P(X_i^d)$, we can reduce all polynomials of degree strictly greater than $ld-1$. Meaning that $R[\Z^n]/I_P$ has a basis given by $X_1^{j_1}\cdots X_n^{j_n}$ with $0\leq j_i<ld$.

By considering the powers of such a monomial, this basis is in bijection with $(\Z/ld\Z)^n$. By Theorem \ref{germ}, $\overline\Pi$ gives a bijection $(\Z/ld\Z)^n\to\overline G_l$, finishing the proof.
\end{proof}
\begin{thm}\label{hecke-dim}
$\mathcal H(S,P)$ is a free $R$-module with basis $\{T_g\mid g\in\overline G_l\}$, in particular it has dimension $(ld)^n$.
\end{thm}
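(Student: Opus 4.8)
The plan is to transfer the freeness of $R[\mathbb Z^n]/I_P$ established in Lemma~\ref{polyquo} over to $\mathcal H(S,P)=R[G]/J_P$ through the bijective $1$-cocycle $\Pi$. The key observation is that, although $\Pi\colon R[\mathbb Z^n]\to R[G]$ is not a ring homomorphism, it is a bijection of sets carrying the $R$-basis $\mathbb Z^n$ of the source onto the $R$-basis $G$ of the target, so its linearization is an isomorphism of $R$-modules; and by Lemma~\ref{pialg} it restricts to a bijection $I_P\to J_P$, in particular $\Pi(I_P)=J_P$.

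First I would form the $R$-linear composite
$$\pi\circ\Pi\colon\ R[\mathbb Z^n]\ \longrightarrow\ R[G]\ \longrightarrow\ \mathcal H(S,P),$$
with $\pi$ the quotient map. It is surjective (both factors are), and its kernel is $\Pi^{-1}(J_P)=I_P$, the last equality because $\Pi$ is injective and $\Pi(I_P)=J_P$. Hence $\pi\circ\Pi$ descends to an $R$-module isomorphism $\phi\colon R[\mathbb Z^n]/I_P\xrightarrow{\ \sim\ }\mathcal H(S,P)$, and Lemma~\ref{polyquo} immediately yields that $\mathcal H(S,P)$ is a free $R$-module of rank $(ld)^n$.

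It then remains to check that $\phi$ sends the monomial basis $\{X_1^{j_1}\cdots X_n^{j_n}\mid 0\le j_i<ld\}$ of $R[\mathbb Z^n]/I_P$ to $\{T_g\mid g\in\overline G_l\}$. The linearization of $\Pi$ sends such a monomial to $T_g$ for a single $g\in G$, which lies in the structure monoid $M$ and has $\ell(g)=\sum_i j_i$; since $0\le j_i<ld$, its image $\overline g\in\overline G_l$ also has $\overline\ell(\overline g)=\sum_i j_i$, so $\ell(g)=\overline\ell(\overline g)$ and Lemma~\ref{gen}(iii) gives $\phi\bigl(\overline{X_1^{j_1}\cdots X_n^{j_n}}\bigr)=\pi(T_g)=T_{\overline g}$. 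By the last assertion of Lemma~\ref{polyquo}, the assignment $(j_1,\dots,j_n)\mapsto\overline g$ is exactly the bijection $\overline\Pi\colon(\mathbb Z/ld\mathbb Z)^n\to\overline G_l$; thus $\phi$ carries the monomial basis bijectively onto $\{T_g\mid g\in\overline G_l\}$, which is therefore an $R$-basis of $\mathcal H(S,P)$.

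The only genuine difficulty sits in Lemma~\ref{pialg}: the whole argument hinges on $\Pi$ matching the two ideals $I_P$ and $J_P$ despite not being a homomorphism, and this is precisely what makes both the well-definedness and the injectivity of $\phi$ work. Everything else is bookkeeping; the one small verification worth spelling out is the length identity $\ell(g)=\overline\ell(\overline g)$ for $g=\Pi(X_1^{j_1}\cdots X_n^{j_n})$, which is what licenses the appeal to Lemma~\ref{gen}(iii). Alternatively, once freeness of rank $(ld)^n$ is known, one may conclude by the general fact that the surjection of free $R$-modules $R[\overline G_l]\twoheadrightarrow\mathcal H(S,P)$, $T_g\mapsto T_g$, between modules of equal finite rank is automatically an isomorphism.
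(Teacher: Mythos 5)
Your proposal is correct and rests on exactly the same ingredients as the paper's proof: Lemma \ref{pialg} (so that $\Pi$ identifies $I_P$ with $J_P$), Lemma \ref{polyquo} (the monomial basis of $R[\Z^n]/I_P$ indexed by $\overline G_l$ via $\overline\Pi$), and Lemma \ref{gen} to recognize the images as the elements $T_{\overline g}$. The only difference is packaging: the paper proves spanning and then linear independence by lifting a vanishing combination to $R[G]$ and applying $\Pi^{-1}$, whereas you assemble the same facts into an explicit $R$-module isomorphism $R[\Z^n]/I_P\to\mathcal H(S,P)$ and match bases, which is the same argument in a slightly more structural form.
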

\begin{proof}
From Lemma \ref{gen} we know that $\{T_g\mid g\in\overline G_l\}$ generates $\mathcal H(S,P)$ as an $R$-module, so in particular $\text{dim} \mathcal H(S,P)\leq (ld)^n$. We just have to show this family is free, but this follows from Lemmas \ref{pialg} and \ref{polyquo}: 

Suppose we have a linear combination $\sum_{\overline g\in\overline G_l} a_{\overline g} T_{\overline g}=0$ in $\mathcal H(S,P)$. By lifting the elements $\overline g\in\overline G_l$ to $g\in G$ we have $\sum_{\overline g\in\overline G_l} a_{\overline g} T_{g}\in J_P$. Then, applying $\Pi^{-1}$ we obtain $\sum_{\overline g\in\overline G_l} a_{\overline g} \Pi^{-1}(T_{g})\in I_P$. Projecting to $R[\mathbb Z^n]/I_P$, this means that $\sum_{\overline g\in\overline G_l} a_{\overline g} \Pi^{-1}(T_{\overline g})=0\in R[\mathbb Z^n]/I_P$. From Lemma \ref{polyquo} the family $\Pi^{-1}(T_{\overline g})$ is a basis of $R[\mathbb Z^n]/I_P$, so we must have $a_{\overline g}=0$ for all $\overline g\in\overline G_l$.
\end{proof}
\begin{rmk}\label{rmk:multi_hecke}
Note that in the above proofs we can take a different polynomial $P$ for each orbit of $S$ under the action of $G$, as in proof of Lemma \ref{pialg} we just need that two elements are in the same orbit to obtain that $\Pi(J)\subseteq I$. If we denote such polynomials by $\underline{P}=(P_i)_{1\leq i\leq n}\in R[X]^n$ with deg $P_i=l_i$ and such that $P_i=P_j$ whenever $s_i$ and $s_j$ are in the same orbit by the action of $\mathcal G$, we obtain the Hecke algebra $\mathcal H(S,\underline{P})$ with dimension equal to $\prod_{i=1}^n (l_id)$ which is the same as the order of the finite group $G/\langle s_i^{[l_id]}\rangle_{1\leq i\leq n}$.

With the same reasoning we can also take a different $d$ for each of those orbits, see for instance \cite{lebed}, but this will not be used in this thesis.

It was chosen to not consider those generalizations (except in Section \ref{ss}) to avoid heavy notation and make the proofs easier to read.
\end{rmk}
\begin{cor}
Taking $P(X)=X^2-pX-q$ with $p,q\in R$ we obtain a definition of an Hecke algebra for cycle sets with  relations of the form $$T_s^{[2d]}=pT_s^{[d]}+q$$
\end{cor}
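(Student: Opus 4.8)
The plan is to read the Corollary as the special case $l=2$ of everything already set up in this section, so that essentially nothing is left to prove beyond checking that $P(X)=X^2-pX-q$ satisfies the standing hypotheses. First I would observe that $\deg P=2=:l>0$ and that the leading coefficient $a_2=1$ is invertible in $R$; hence the two-sided ideal $J_P\subset R[G]$ and the algebra $\mathcal H(S,P)=R[G]/J_P$ are defined exactly as in Lemma \ref{pialg}, and Theorem \ref{hecke-dim} applies, giving that $\mathcal H(S,P)$ is a free $R$-module with basis $\{T_g\mid g\in\overline G_2\}$, in particular of rank $(2d)^n$. (If moreover $q$ is invertible, then the constant coefficient $a_0=-q$ is invertible as well, and the further properties proved in the later sections — invertibility of the $T_g$, the anti-involution, and semisimplicity over $\C(q)$ — apply verbatim to this algebra.)

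Next I would unpack the defining relation. Writing $P(X)=\sum_{k=0}^{2}a_kX^k$ with $a_0=-q$, $a_1=-p$, $a_2=1$, Equation (\ref{eq:hecke_rel}) specializes, for each $s\in S$, to
$$T_{s^{[2d]}}=\frac{-a_0}{a_2}\,T_{s^{[0]}}+\frac{-a_1}{a_2}\,T_{s^{[d]}}=q\,T_{s^{[0]}}+p\,T_{s^{[d]}}.$$
Since $s^{[0]}=1_G$ we have $T_{s^{[0]}}=1$, the unit of $\mathcal H(S,P)$, while by Proposition \ref{o(T)} we have $s^{[2d]}=(s^{[d]})^2$ in $G$, so that $T_{s^{[2d]}}=T_{s^{[d]}}\,T_{s^{[d]}}$. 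Using the abuse of notation $T_s^{[d]}:=T_{s^{[d]}}$ and $T_s^{[2d]}:=T_{s^{[2d]}}$ introduced above, this is exactly the asserted relation $T_s^{[2d]}=p\,T_s^{[d]}+q$.

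There is no genuine obstacle here: the only point that requires any attention is the invertibility of the leading coefficient of $P$, which is immediate because $P$ is monic, and everything else is a direct substitution into Theorem \ref{hecke-dim} and into the explicit presentation (\ref{eq:hecke_rel}) of the quotient.
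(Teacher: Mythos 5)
Your proposal is correct and matches the paper's (implicit) argument: the corollary is simply the case $l=2$, $P(X)=X^2-pX-q$ of Theorem \ref{hecke-dim}, with the relation obtained by specializing Equation (\ref{eq:hecke_rel}) (using $a_0=-q$, $a_1=-p$, $a_2=1$, $T_{s^{[0]}}=1$, and $s^{[2d]}=(s^{[d]})^2$ from Proposition \ref{o(T)}, together with the stated abuse of notation $T_s^{[kd]}=T_{s^{[kd]}}$). Nothing further is needed.
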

\begin{ex}\label{ex:hecke_const}
Take $S=\{s_1,\dots,s_n\}$, $\sigma\in \mathfrak S_n$ and $\psi(s_i)=\sigma$. Then $S$ is of class $d=o(\sigma)$ (the order of the permutation), and taking $P(X)=X^2-X-1$ we get $$\mathcal H(S,P)=R\left\langle s_1,\dots,s_n\middle |\begin{array}{cc}
   s_i s_{\sigma(j)}=s_js_{\sigma(i)},&1\leq i<j\leq n\\
   (s_is_{\sigma(i)}\cdots s_{\sigma^{d-1}(i)})^2=s_is_{\sigma(i)}\cdots s_{\sigma^{d-1}(i)}+1,& 1\leq i\leq n
\end{array} \right\rangle$$
\end{ex}
%
\begin{rmk}
For all $g\in G$, by Proposition-Definition \ref{lmap}, we have $\lambda_g(s^{[d]})=(\lambda_g(s))^{[d]}$. So the action of $G$ on $R[G]$ stabilizes $J$ the ideal generated by the $P(s^{[d]})$, meaning that $G$ acts on $\mathcal H(S,P)$. As $ldG\subset\text{Soc}(G)$, the action of $dG$ on $J$ is trivial and thus $\overline G_l$ acts on $\mathcal H(S,P)$.
\end{rmk}
\begin{rmk} We see one important difference between Hecke algebras for Coxeter groups and for Structure group of solutions: for a finite Coxeter group $W$ with associated Artin--Tits group $A$, one can view the Hecke algebra as a deformation of the quotient $R[A]\to R[W]$. However, with our approach for solutions, we have to consider the deformation of a larger quotient $R[G]\to R[\overline G_l]$ with $l>1$ (if $l=1$ then the relations are of the form $T_s^{[d]}=-\frac{a_0}{a_l}$, which is not an interesting deformation).

Moreover, It was shown by Coxeter in \cite{coxeterQuotient} that the quotient $B_n/\langle s^k\rangle$ is finite if and only if $\frac{1}{n}+\frac{1}{k}>\frac{1}{2}$, thus for $n\geq 6$ the quotient is finite only for $k=2$ (the symmetric group). This means that, in the case of Coxeter groups, we can only expect similar definitions of Hecke algebra with polynomials of degree 2.

However here, we can work over any degree, which highlights the different behaviours of the germs and associated Hecke algebra for Coxeter groups and structure groups of solutions. 
\end{rmk}

We conclude the construction of the Hecke algebra for solutions by relating the Hecke algebra of a solution with the Hecke algebra of its retraction. As in Proposition-Definition \ref{pdef:ret}, we denote by $S'$ the retraction of $S$. Then the class $d'$ of $S'$ divides the class $d$ of $S$ by Lemma \ref{lem:ret_div}. We deduce the following:
\begin{prop}
We have a surjective algebra morphism $$\mathcal H(S,P(X))\to\mathcal H\left(S',P\left(X^{\frac{d}{d'}}\right)\right).$$
\end{prop}
\begin{proof}
The morphism $G\to G'$ linearly extends to $R[G]\to R[G']$. By Theorem \ref{o(T)}, for any $s\in S$ and any positive integer $k$, we have $(s^{[d]})^k=s^{[kd]}$. Moreover, by Proposition \ref{lem:ret_div} we know that $d'$ divides $d$, so $(\underline s^{[d']})^{\frac{d}{d'}}=\underline s^{[d]}$. Thus we get $\mathcal H\left(S',P\left(X^{\frac{d}{d'}}\right)\right)=R[G']/\left(P\left((\underline s^{[d']})^{\frac{d}{d'}}\right)\right)=R[G']/\left(P\left(\underline s^{[d]}\right)\right)$. Thus $R[G]\to\mathcal H(S',P(X^{\frac{d}{d'}}))$ factors through $H(S,P)$.
\end{proof}
\begin{ex}
If $S$ is of class $4$, $S'$ of class $2$, and we take $P(X)=X^2+X+1$, then we have a morphism $\mathcal H(S,X^8+X^4+1)\to\mathcal H(S',X^8+X^4+1)$.
\end{ex}

The retraction of a set-theoretical solution to the Yang--Baxter equation was introduced in \cite{etingof}. Here, we relate the Hecke algebra of a solution ahd the Hecke algebra of its retraction. 
\begin{propdef}[\cite{etingof,rump}]\index{Cycle set!Retraction}\label{pdef:ret}
The retraction of $S$ is the quotient set $S'$ by the equivalence relation $s\sim t\Leftrightarrow \psi(s)=\psi(t)$.

The cycle set structure on $S$ naturally induces a cycle set structure on $S'$. Moreover, we also obtain a morphism of cycle sets $S\to S'$, and a morphism of braces $G\to G'$ from the structure brace of $S$ to the one of $S'$.
\end{propdef}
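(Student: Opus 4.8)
The plan is to put a cycle set structure on $S'$ via $[s]*'[t]\coloneqq[s*t]$, to check this is well defined and satisfies the axioms, and then to promote the tautological surjection $q\colon S\to S'$, $s\mapsto[s]$, first to a group homomorphism and then to a brace homomorphism $G\to G'$. The heart of the matter is well-definedness. Since $(s*t)*(s*u)=\psi(s*t)\bigl(\psi(s)(u)\bigr)$ and likewise on the right, \eqref{RCL} is equivalent to the identity of maps $\psi(s*t)\,\psi(s)=\psi(t*s)\,\psi(t)$, hence $\psi(s*t)=\psi(t*s)\,\psi(t)\,\psi(s)^{-1}$. From this I extract the key fact that \emph{each $\psi(s)$ preserves $\sim$}: if $t\sim u$, i.e.\ $\psi(t)=\psi(u)$, then $t*s=\psi(t)(s)=\psi(u)(s)=u*s$, so $\psi(s*t)=\psi(t*s)\psi(t)\psi(s)^{-1}=\psi(u*s)\psi(u)\psi(s)^{-1}=\psi(s*u)$, that is $s*t\sim s*u$. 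Granting this: if $s\sim\tilde s$ and $t\sim\tilde t$, then $t*s=\tilde t*s$ (as $\psi(t)=\psi(\tilde t)$) and $\psi(\tilde t*s)=\psi(\tilde t*\tilde s)$ (key fact applied to $\tilde t$), so, using also $\psi(s)=\psi(\tilde s)$ and $\psi(t)=\psi(\tilde t)$, we get $\psi(s*t)=\psi(t*s)\psi(t)\psi(s)^{-1}=\psi(\tilde t*\tilde s)\psi(\tilde t)\psi(\tilde s)^{-1}=\psi(\tilde s*\tilde t)$, i.e.\ $s*t\sim\tilde s*\tilde t$. Thus $*'$ is well defined, and in particular $[s]\mapsto\bigl([t]\mapsto[s*t]\bigr)$ defines a map $\psi'\colon S'\to\Sym(S')$.

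Next I check the cycle set axioms for $(S',*')$. The map $\psi'([s])\colon[t]\mapsto[s*t]=[\psi(s)(t)]$ is surjective because $\psi(s)$ permutes $S$, so $[\psi(s)^{-1}(u)]\mapsto[u]$; since $S'$ is finite, $\psi'([s])$ is a bijection. Relation \eqref{RCL} for $(S',*')$ is the $q$-image of \eqref{RCL} for $(S,*)$: $([s]*'[t])*'([s]*'[u])=[(s*t)*(s*u)]=[(t*s)*(t*u)]=([t]*'[s])*'([t]*'[u])$. So $(S',*')$ is a cycle set, $q$ is a cycle set morphism by construction, and $*'$ is the unique such structure since $q$ is onto.

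Finally, the brace homomorphism. As $q$ is a cycle set morphism, it sends the defining relations $s(s*t)=t(t*s)$ of $G$ to relations holding in $G'$, so it extends to a group homomorphism $\phi\colon G\to G'$ with $\phi(s)=[s]$. Note that every $\lambda_g$ stabilizes $S$ (the maps $\lambda_s^{\pm1}$ do, by Theorem \ref{istruct}, and $\lambda$ is a homomorphism of the multiplicative group), and a short induction on the word length of $g$ yields the compatibility $\phi(\lambda_g^{-1}(t))=\lambda_{\phi(g)}^{-1}(\phi(t))$ for all $g\in G$, $t\in S$: for $g=s$ it reads $[s*t]=[s]*'[t]$ (using $\lambda_s^{-1}|_S=\psi(s)$ of Theorem \ref{istruct}, in $G$ and $G'$), and it is stable under products and inverses of $g$ since $\phi$ and $\lambda$ are multiplicative. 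Combining this with the brace identity $x+t=x\cdot\lambda_x^{-1}(t)$ and its analogue in $G'$ gives, for all $x\in G$ and $t\in S$, $\phi(x+t)=\phi\bigl(x\cdot\lambda_x^{-1}(t)\bigr)=\phi(x)\cdot\phi\bigl(\lambda_x^{-1}(t)\bigr)=\phi(x)\cdot\lambda_{\phi(x)}^{-1}(\phi(t))=\phi(x)+\phi(t)$; additivity $\phi(x+y)=\phi(x)+\phi(y)$ in general then follows by an easy induction, writing $y\in\Z^S$ as a $\Z$-combination of the generators. Hence $\phi$ is both multiplicative and additive, i.e.\ a morphism of braces.

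I expect this last step to be the only delicate point: $\phi$ is manufactured from a purely multiplicative presentation, whereas additivity is precisely what must be established, so one must shuttle between the two brace operations with some care. Everything else reduces directly to the cycle set axiom \eqref{RCL}; and all of these facts are classical, cf.\ \cite{etingof,rump,brace}.
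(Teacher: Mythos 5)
Your proposal is correct. Note that the paper itself offers no proof of this statement — it is quoted from \cite{etingof,rump} — so there is no internal argument to compare against; what you give is the standard verification, and it is sound. The key identity $\psi(s*t)\,\psi(s)=\psi(t*s)\,\psi(t)$, read off from \eqref{RCL} by letting $u$ range over $S$, is exactly the right tool: it yields both that each $\psi(s)$ preserves $\sim$ and the two-variable well-definedness of $[s]*'[t]=[s*t]$, after which \eqref{RCL} for $S'$ is just the image of \eqref{RCL} for $S$. Two small points to keep honest: bijectivity of $\psi'([s])$ is argued via surjectivity plus finiteness of $S'$, which is fine here since the paper fixes $S$ finite (in the infinite setting one would need a separate injectivity argument); and in the final induction establishing additivity of $\phi$, your reduction only treats $\phi(x+t)$ for $t\in S$, so for $y\in\Z^S$ with negative coefficients you should add the one-line complement $\phi(x-t)=\phi(x)-\phi(t)$, obtained from $\phi((x-t)+t)=\phi(x-t)+\phi(t)$, together with $\phi(0)=\phi(1)=1=0$. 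The passage from the multiplicative presentation to the $\lambda$-compatibility $\phi(\lambda_g^{-1}(t))=\lambda_{\phi(g)}^{-1}(\phi(t))$, proved on generators via Theorem \ref{istruct} and propagated by multiplicativity of $\phi$ and $\lambda$ (using that every $\lambda_g$ stabilizes $S$), is indeed the delicate step, and you handle it correctly; this is also consistent with how the paper later uses the brace morphism $G\to G'$ in Lemma \ref{lem:ret_div}.
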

\begin{lem}\label{lem:ret_div}
Let $d$ (resp. $d'$) be the Dehornoy's class of $S$ (resp. $S'$). Then $d'$ divides~$d$.
\end{lem}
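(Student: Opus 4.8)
The plan is to exploit the brace morphism $\pi\colon G\to G'$ furnished by Proposition-Definition \ref{pdef:ret} and to reduce everything to the characterization of the Dehornoy's class in terms of the Socle given in Theorem \ref{germ}: namely, that the positive integers $k$ with $kS'\subseteq\mathrm{Soc}(G')$ are precisely the multiples of $d'$. So it suffices to show that $d$ itself satisfies $dS'\subseteq\mathrm{Soc}(G')$.

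First I would record two elementary facts about $\pi$. Since the cycle set morphism $S\to S'$ is the quotient map $s\mapsto\underline s$ by the relation $\psi(s)=\psi(t)$, it is surjective, hence so is the induced morphism $\pi\colon G\to G'$ (it sends the generating set $S$ onto the generating set $S'$). Moreover $\pi$ is a morphism of braces, so in particular it is additive, giving $\pi(ks)=k\,\underline s$ for every $s\in S$ and $k\in\Z$, and it intertwines the $\lambda$-maps: $\pi(\lambda_a(b))=\lambda_{\pi(a)}(\pi(b))$ for all $a,b\in G$ (this is the standard consequence of $ab=a+\lambda_a(b)$ together with $\pi$ being a morphism for both operations).

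Next, by the definition of $d$ (Theorem \ref{germ}) we have $dS\subseteq\mathrm{Soc}(G)=\mathrm{Ker}(\lambda)$, i.e. $\lambda_{ds}=\mathrm{id}_G$ for every $s\in S$. Applying $\pi$: for every $s\in S$ and every $b\in G$,
$$\lambda_{\pi(ds)}(\pi(b))=\pi(\lambda_{ds}(b))=\pi(b).$$
Since $\pi$ is surjective, this forces $\lambda_{\pi(ds)}=\mathrm{id}_{G'}$, that is $\pi(ds)=d\,\underline s\in\mathrm{Soc}(G')$. As $\underline s$ ranges over all of $S'$ when $s$ ranges over $S$, we conclude $dS'\subseteq\mathrm{Soc}(G')$. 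By Theorem \ref{germ} applied to $S'$, the integer $d$ is therefore a multiple of $d'$, i.e. $d'\mid d$.

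There is no real obstacle here; the only point that needs a little care is the claim that the brace morphism $\pi$ intertwines the $\lambda$-maps and is additive, so that $\pi(ds)=d\,\underline s$ and $\lambda_{\pi(ds)}\circ\pi=\pi\circ\lambda_{ds}$ — both of which follow immediately from $\pi$ being a morphism of braces and from the identity $ab=a+\lambda_a(b)$ of Proposition-Definition \ref{lmap}.
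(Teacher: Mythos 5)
Your proof is correct and follows essentially the same route as the paper: the paper also pushes $dS\subseteq\mathrm{Soc}(G)$ through the brace morphism $G\to G'$, verifying $\lambda_{d\underline s}(\underline t)=d\underline s\cdot\underline t-d\underline s=\underline{\lambda_{ds}(t)}=\underline t$ directly from $\lambda_a(b)=ab-a$, which is exactly your intertwining-plus-additivity argument spelled out concretely, and then invokes the characterization of $d'$ as the smallest integer with $d'S'\subseteq\mathrm{Soc}(G')$. No gap; your extra remarks on surjectivity and $\pi(ks)=k\underline s$ are just the details the paper leaves implicit.
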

\begin{proof}
Let $\underline s$ be the equivalence classes in $S'$ of $s\in S$. Then, from the fact that $G\to G'$ is a morphism of brace and that $S$ is of class $d$, we have in $G'$ $$\lambda_{d\underline{s}}(\underline t)=d\underline{s}\cdot\underline t-d\underline{s}=\underline{ds\cdot t-ds}=\underline{\lambda_{ds}(t)}=\underline t.$$
This means that for all $s$, we have that $d\underline s$ is in the socle of $G_{S'}$. So $d$ is a multiple of $d'$ (the smallest integer such that $dG'\subset\text{Soc}(G')$).
\end{proof}
\begin{ex}
Consider $S=\{s_1,s_2,s_3,s_4\}$ with $\psi(s_1)=\psi(s_3)=(12)(34)$ and $\psi(s_2)=\psi(s_4)=(14)(23)$. Then $S'$ has two elements: $t_1=\{s_1,s_3\}$ and $t_2=\{s_2,s_4\}$, and both $t_1$ and $t_2$ act on $S'$ by the permutation $(12)$. For instance, $t_1*t_2=\underline{s_1}*\underline{s_4}=\underline{s_1*s_4}=\underline{s_3}=t_1$, and this computation does not depend on the choice of representatives for $t_1$ and $t_2$.
\end{ex}
\begin{prop}
We have a surjective algebra morphism $$\mathcal H(S,P(X))\to\mathcal H\left(S',P\left(X^{\frac{d}{d'}}\right)\right).$$
\end{prop}
\begin{proof}
The morphism $G\to G'$ linearly extends to $R[G]\to R[G']$. As $d$ is the Dehornoy's class of $S$, for any $s\in S$ and any positive integer $k$, we have $(s^{[d]})^k=s^{[kd]}$. Moreover, by Proposition \ref{lem:ret_div} we know that $d'$ divides $d$, so $(\underline s^{[d']})^{\frac{d}{d'}}=\underline s^{[d]}$. Thus we get $\mathcal H\left(S',P\left(X^{\frac{d}{d'}}\right)\right)=R[G']/\left(P\left((\underline s^{[d']})^{\frac{d}{d'}}\right)\right)=R[G']/\left(P\left(\underline s^{[d]}\right)\right)$. Thus $R[G]\to\mathcal H(S',P(X^{\frac{d}{d'}}))$ factors through $H(S,P)$.
\end{proof}
\begin{ex}
If $S$ is of class $4$, $S'$ of class $2$, and we take $P(X)=X^2+X+1$, then we have a morphism $\mathcal H(S,X^8+X^4+1)\to\mathcal H(S',X^8+X^4+1)$.
\end{ex}
\section{Anti-involution on the Hecke algebra}
Recall that we fixed a cycle set $(S,*)$ of size $n$, of Dehornoy's class $d$, with structure group $G$ and germ $\overline G_l=G/\langle lds\rangle$. We fix a polynomial $P$ in $R[x]$, written as $P(X)=\sum\limits_{k=0}^l a_kX^k$ with $a_l$ invertible. We have previously defined the Hecke algebra for cycle sets $\mathcal H(S,P)$. In this section, the goal is to endow $\mathcal H(S,P)$ with an anti-involution derived from the inversion in the group $\overline G_l$, in parallel to what is known for finite Coxeter groups (see \cite[Exercise 4.8]{geck} for instance).
\begin{prop}\label{prop:hecke_inv}
Suppose $a_0,a_l$ are invertible in $R$. Then $$T_s^{-1}=\sum_{k=1}^l \frac{-a_k}{a_0}T_{s*s}^{[kd-1]}.$$

Moreover $(T_s^{-1})^{[d]}=(T_{s*s}^{[d]})^{-1}$.
\end{prop}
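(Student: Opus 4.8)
The plan is to verify the formula for $T_s^{-1}$ by a direct computation, using the defining relation of $\mathcal H(S,P)$ together with the identities of Lemma \ref{comp}. First I would recall that, by Lemma \ref{comp}(i), we have $s^{[kd]} = s\cdot(s*s)^{[kd-1]}$ in $G$, so in $\mathcal H(S,P)$ one gets $T_{s^{[kd]}} = T_s T_{s*s}^{[kd-1]}$. Rewriting Equation \eqref{eq:hecke_rel} in the form $\sum_{k=0}^{l} a_k T_{s^{[kd]}} = 0$ (using that $a_l$ is invertible and $P(s^{[d]})\in J_P$), and isolating the $k=0$ term $a_0 T_1 = a_0$, I obtain $a_0 = -\sum_{k=1}^{l} a_k T_s T_{s*s}^{[kd-1]} = T_s\left(-\sum_{k=1}^{l} a_k T_{s*s}^{[kd-1]}\right)$. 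Since $a_0$ is invertible in $R$, dividing by $a_0$ gives $T_s\left(\sum_{k=1}^{l}\frac{-a_k}{a_0}T_{s*s}^{[kd-1]}\right) = 1$, which shows the displayed expression is a right inverse of $T_s$. Because $T_s$ is the image of a group element, it is invertible in $R[G]$, hence its image in $\mathcal H(S,P)$ has a two-sided inverse as soon as it has a one-sided inverse in a finite-rank algebra; alternatively, one checks the symmetric product $\left(\sum_{k=1}^{l}\frac{-a_k}{a_0}T_{s*s}^{[kd-1]}\right)T_s = 1$ directly, using Lemma \ref{comp}(iv)--(v) to move $T_s$ past the $T_{s*s}$ factors (this is where $\rho_s t = (s*t)\rho_{t*s}$ and $s^{[d]}t = t(t*s)^{[d]}$ come in).

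For the second assertion, I would compute $(T_s^{-1})^{[d]}$, meaning $T_s^{-1}$ raised to the $d$-th power in $\mathcal H(S,P)$ — but more carefully, it should be read through the abuse of notation of Lemma \ref{gen}: since inversion is an anti-automorphism of $\overline G_l$ and $(s^{[d]})^{-1} = (s^{-1})^{[d]}$ holds in the brace (as $\lambda_{s^{[d]}}=\lambda_s^d$ and Lemma \ref{prodsum}(3) applies once things are in the socle), one has $(T_{s^{[d]}})^{-1} = T_{(s^{[d]})^{-1}}$. The content is then to identify $(s^{-1})^{[d]}$, or rather $\left(\sum \frac{-a_k}{a_0}(s*s)^{[kd-1]}\right)$ raised appropriately, with the inverse of $s^{[d]}$; concretely, since $s^{[d]}$ lies in a commutative subalgebra generated by $\{(s*s)^{[kd-1]}\}$-type elements (all powers of $s^{[d]}$ up to the relation $P(s^{[d]})=0$), the inverse of $T_{s^{[d]}}$ in that commutative subalgebra is the polynomial $\sum_{k=1}^{l}\frac{-a_k}{a_0}(T_s T_{s*s}^{[kd-1]})^{?}$ obtained from $P$, and one matches it term by term against $(T_s^{-1})^{[d]} = (T_s^{[d]})^{-1}$ using $(T_s^{-1})(T_s) = 1$ repeatedly and the commutation identities $(s*t)^{[d]}\rho_s = \rho_s t^{[d]}$ of Lemma \ref{comp}(vii) to collect the $d$ copies.

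The main obstacle I anticipate is the bookkeeping in the second statement: $T_s^{[d]}$ is only an abuse of notation for $T_s T_{s*s}^{[d-1]}$, so "raising $T_s^{-1}$ to the power $d$" has to be interpreted consistently, and one must show that the naive product $(T_s^{-1})(T_s^{-1})\cdots(T_s^{-1})$ ($d$ times) genuinely equals $T_{(s^{-1})^{[d]}}$ — which requires knowing that $s^{-1}$ generates, in the brace sense, an element whose $d$-th multiple behaves like $s^{[d]}$ up to the cocycle twist, i.e. invoking Proposition \ref{o(T)}(ii) and the socle condition $ld\,G\subset\mathrm{Soc}(G)$ to guarantee all the relevant $\lambda$-twists vanish. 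Once that reduction is in place, the equality $(T_s^{[d]})^{-1}=(T_{s^{[d]}})^{-1}$ is purely formal, and the first part already gives the explicit inverse; everything else is the routine Lemma \ref{comp} manipulation that I would not spell out in full.
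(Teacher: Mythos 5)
Your plan for the first assertion is correct and is essentially the paper's own argument: factor $T_s$ out of the relation \eqref{eq:hecke_rel_germ} using $s^{[kd]}=s\cdot(s*s)^{[kd-1]}$ and divide by the invertible $a_0$. Your worry about one-sidedness is easily dispatched: $T_s$ is the image of a unit of $R[G]$, hence is already invertible in $\mathcal H(S,P)$, so any one-sided inverse is automatically the two-sided inverse.

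The second assertion is where your proposal has a genuine gap. The identities you propose to route through are false in this setting. In $\mathcal H(S,P)$ one does \emph{not} have $(T_{s^{[d]}})^{-1}=T_{(s^{[d]})^{-1}}$ --- this is exactly the trap flagged in Remark \ref{rmk:tg-1}: the inverse of $T_{s^{[d]}}$ is a polynomial in $T_{s^{[d]}}$ with several terms, not a single basis element. Likewise $(s^{[d]})^{-1}=(s^{-1})^{[d]}$ fails in the brace unless $s*s=s$: since $s^{-1}=-(s*s)$, one gets $(s^{-1})^{[d]}=-d(s*s)$, whereas $(s^{[d]})^{-1}=-ds$ (and $\lambda_{s^{[d]}}=\lambda_{ds}=\mathrm{id}$, which is not $\lambda_s^d$ in general). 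More fundamentally, $(T_s^{-1})^{[d]}$ is neither the naive $d$-th power of $T_s^{-1}$ in $\mathcal H(S,P)$ nor $T_{(s^{-1})^{[d]}}$: the paper gives it meaning through the $1$-cocycle, namely as $\Pi(Y'^d)$ where $Y'=\sum_{k=1}^l\frac{-a_k}{a_0}Y^{kd-1}$ is the commutative preimage of the inverse formula, with $\Pi(Y)=s*s$. The actual content of the second statement is the computation of the cocycle twist, which is absent from your sketch: every term $(s*s)^{[kd-1]}$ of $T_s^{-1}$ has the same permutation $\psi(s)^{-1}$ (Lemma \ref{comp}), so $\Pi(Y'Y')=\Pi(Y')\cdot\psi(s)^{-1}(\Pi(Y'))=T_{t_1}^{-1}T_{t_d}^{-1}$, where $s^{[d]}=t_1t_2\cdots t_d$ with $t_{i+1}=t_i*t_i$ and $t_d*t_d=t_1=s$, and inductively $\Pi(Y'^d)=T_{t_1}^{-1}T_{t_d}^{-1}\cdots T_{t_2}^{-1}=(T_{t_2}\cdots T_{t_d}T_{t_1})^{-1}=(T_{s*s}^{[d]})^{-1}$. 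This twist is also the only source of the $s*s$ on the right-hand side of the statement, which your plan never produces: ``collecting $d$ copies of $T_s^{-1}$'' cannot work, because the successive factors get twisted into the pairwise different elements $T_{t_d}^{-1},T_{t_{d-1}}^{-1},\dots,T_{t_2}^{-1}$.
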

\begin{proof}
From Lemma \ref{o(T)} we have, for any positive integer $k$, $s^{[k]}=s\cdot (s*s)^{[k]}$. We will use this to check that $\sum\limits_{k=1}^l \frac{-a_k}{a_0}T_{s*s}^{[kd-1]}$ is indeed the inverse of $T_s$:

Firstly, $T_s \left(\sum_{k=1}^l \frac{-a_k}{a_0}T_{s*s}^{[kd-1]}\right)=\sum_{k=1}^{l-1} \frac{-a_k}{a_0}T_s^{[kd]}+\frac{-a_l}{a_0}T_sT_{s*s}^{[ld-1]}$. By Equation (\ref{eq:hecke_rel_germ}) we have $T_s T_{s*s}^{[ld-1]}=\sum_{k=0}^{l-1} \frac{-a_k}{a_l} T_{s^{[kd]}}
$. We conclude that $$T_s \left(\sum_{k=1}^l \frac{-a_k}{a_0}T_{s*s}^{[kd-1]}\right)=\frac{-a_l}{a_0}\frac{-a_0}{a_l}+\sum_{k=1}^{l-1} \left(\frac{-a_k}{a_0}+\frac{a_l}{a_0}\frac{a_k}{a_l}\right)T_s^{[kd]}=1.$$

Then, let $X,Y\in\R[\Z^n]$ be such that $P(X)=s$ and $\Pi(Y)=s*s$. This means that, for $Y'=\sum_{k=1}^l \frac{-a_k}{a_0}Y^{kd-1}$, we have $\Pi(Y')=T_s^{-1}$ and $\Pi(Y'^d)=(T_s^{-1})^{[d]}$. By Lemma \ref{comp}, we have $\psi(\Pi(Y^{kd-1}))=\psi((s*s)^{[kd-1]})=\psi(\rho_s (s*s)^{[(k-1)d]})=\psi(\rho_s)=\psi(s)^{-1}$. Thus, in the sum for $T_s^{-1}$, all the terms  have the same permutation. Now, by Proposition \ref{o(T)} we can write $s^{[d]}=t_1\dots t_d$ where $t_i=t_{i-1}*t_{i-1}$ and $s=t_1=t_d*t_d$ (and so $t_2^{[d]}=t_2\dots t_dt_1$). By Theorem \ref{istruct}, we know that $\Pi$ is a 1-cocycle, meaning that $\Pi(Y'Y')=\Pi(Y')\lambda_{\Pi(Y')}(\Pi(Y'))=\Pi(Y') \psi(s)^{-1}(\Pi(Y'))$. As $\psi(s)^{-1}(s*s)=s$, we have $\psi(s)^{-1}(T_{s*s}^{[kd-1]})=T_s^{[kd-1]}=T_{t_d*t_d}^{[kd-1]}$. Thus $\Pi(Y'Y')=T_{t_1}^{-1}T_{t_d}^{-1}$. By induction, we then have $\Pi(Y'^d)=T_{t_1}^{-1}T_{t_d}^{-1}T_{t_{d-1}}^{-1}\dots T_{t_2}^{-1}=(T_{t_2}\dots T_{t_d} T_{t_1})^{-1}=(T_{t_2}^{[d]})^{-1}$.
\end{proof}
\begin{rmk}\label{rmk:tg-1}
One has to be careful that $T_s^{-1}\neq T_{s^{-1}}$. Indeed, by Lemma \ref{comp} and Proposition \ref{o(T)}, we have $T_{s^{-1}}=T_{\rho_s}=T_{s*s}^{[ld-1]}$, which is only one of the terms occurring in $T_s^{-1}$.
\end{rmk}
\begin{ex}
Take $R=\mathbb Z[q^{\pm 1}]$ and the polynomial $P(X)=X^2-(q-1)X-q=(X-q)(X+1)$, which satisfies the hypotheses of Proposition \ref{prop:hecke_inv}. Then $$T_s^{-1}=\frac{1-q}{q}T_{s*s}^{[d-1]}+\frac{1}{q}T_{s*s}^{[2d-1]}.$$
\end{ex}
\begin{cor}\label{cor:inverse_hecke}
For any $g$ in $\overline G_l$, $T_g$ has an inverse in $\mathcal H(S,P)$.
\end{cor}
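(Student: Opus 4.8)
The plan is to deduce this immediately from the invertibility of the generators, which is Proposition \ref{prop:hecke_inv} (under the standing hypothesis that the constant coefficient $a_0$ of $P$ is invertible in $R$, as holds for instance for $P(X)=X^2-(q-1)X-q$ over $R[q^{\pm 1}]$). The point is that $\mathcal H(S,P)$ is an associative $R$-algebra, so a product of units is again a unit.

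Concretely, first I would fix $g\in\overline G_l$ and pick a reduced expression $g=s_{i_1}\cdots s_{i_r}$ with each $s_{i_j}\in S$; such an expression exists by Remark \ref{rmk:reduced}. By Lemma \ref{gen}(ii), the element $T_g\in\mathcal H(S,P)$ is then well-defined and equals $T_{s_{i_1}}\cdots T_{s_{i_r}}$. Next, Proposition \ref{prop:hecke_inv} gives that each factor $T_{s_{i_j}}$ is invertible, with explicit inverse $T_{s_{i_j}}^{-1}=\sum_{k=1}^{l}\frac{-a_k}{a_0}T_{s_{i_j}*s_{i_j}}^{[kd-1]}$. A product of invertible elements in a ring is invertible, so $T_g$ is a unit, with $T_g^{-1}=T_{s_{i_r}}^{-1}\cdots T_{s_{i_1}}^{-1}$.

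There is essentially no obstacle here: the argument is a one-line consequence of the previous results, and one only needs to observe that the resulting $T_g^{-1}$ does not depend on the chosen reduced expression since inverses in an associative ring are unique (while $T_g$ itself is independent of the expression by Lemma \ref{gen}(ii)). If desired, one can also record that, since $\overline G_l$ is finite, this shows the $T_g$ form a group-like basis of units of $\mathcal H(S,P)$.
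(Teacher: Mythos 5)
Your argument is correct and is essentially the paper's own proof: write $T_g$ as a product of generators $T_{s_{i_j}}$ via a reduced expression, invoke Proposition \ref{prop:hecke_inv} for the invertibility of each factor, and take the product of the inverses in reverse order. The extra remarks on well-definedness via Lemma \ref{gen}(ii) and the hypothesis that $a_0$ be invertible are sensible but do not change the route.
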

\begin{proof}
If $g=t_1\dots t_r$ then the inverse of $T_g=T_{t_1}\cdots T_{t_r}$ is $T_g^{-1}=T_{t_r}^{-1}\cdots T_{t_1}^{-1}$.
\end{proof}
In a group $G$, the map $\iota$ sending an element to its inverse is an anti-involution, that is: $\iota(gh)=\iota(h)\iota(g)$ and $\iota(\iota(g))=g$. This anti-involution is known to extend to the generic Iwahori--Hecke algebra in the case of Coxeter groups \cite[Exercise 4.8]{geck}. We show that the same holds for Hecke algebra of structure groups of solutions to the Yang--Baxter equation, where the algebra is associated to the polynomial $P(X)=\sum\limits_{k=0}^l a_kX^k$ with $a_l$ invertible and $l>0$.
\begin{thm}\label{thm:invol}
If $P$ splits over $R$ with invertible roots $\alpha_1,\dots,\alpha_l$ (not required to be distinct), and if there exists an anti-involution $\iota:R\to R$ sending each $\alpha_i$ to $\alpha_i^{-1}$.

Then $\iota$ extends to an anti-involution of $\mathcal H(S,P)$ by sending $T_g$ to $T_g^{-1}$ for $g$ in $\overline G_l$.
\end{thm}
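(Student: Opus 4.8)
The plan is to realize $\iota$ as the unique $\iota$-semilinear anti-homomorphism $\Theta$ of $\mathcal H(S,P)$ determined by $\Theta(T_s)=T_s^{-1}$ for all $s\in S$, and then to check it is an involution restricting to $T_g\mapsto T_g^{-1}$ on $\overline G_l$. Observe first that, since $P=a_l\prod_i(X-\alpha_i)$ with all $\alpha_i$ invertible and $a_l$ invertible, the constant term $a_0=a_l\prod_i(-\alpha_i)$ is invertible; hence Proposition \ref{prop:hecke_inv} applies and, by Corollary \ref{cor:inverse_hecke}, every $T_g$ ($g\in\overline G_l$) is invertible in $\mathcal H(S,P)$, so the target elements $T_s^{-1}$ make sense.

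\emph{Existence of $\Theta$.} By Lemma \ref{gen}(i) together with the relation (\ref{eq:hecke_rel_germ}), the algebra $\mathcal H(S,P)$ is presented over $R$ by the generators $\{T_s\}_{s\in S}$ subject to the braid-type relations $T_sT_{s*t}=T_tT_{t*s}$ and the deformation relations $P(T_s^{[d]})=0$. To build an anti-homomorphism $\mathcal H(S,P)\to\mathcal H(S,P)$ restricting to $\iota$ on $R$ and sending $T_s\mapsto T_s^{-1}$, it suffices to verify that both families of relations are preserved (in the order-reversed, $\iota$-twisted sense). For the braid-type relations this is immediate: the candidate image of $T_sT_{s*t}$ is $T_{s*t}^{-1}T_s^{-1}=(T_sT_{s*t})^{-1}$, and since $T_sT_{s*t}=T_tT_{t*s}$ already holds in $\mathcal H(S,P)$, so do their inverses. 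For the deformation relations, use Proposition \ref{o(T)} to write $s^{[d]}=t_1\cdots t_d$ with $t_i=T^{i-1}(s)\in S$ and $s^{[kd]}=(s^{[d]})^k$; then $P(T_s^{[d]})=\sum_k a_k(T_{t_1}\cdots T_{t_d})^k$, and its candidate image is
$$\sum_{k=0}^{l}\iota(a_k)\bigl(T_{t_d}^{-1}\cdots T_{t_1}^{-1}\bigr)^{k}=\sum_{k=0}^{l}\iota(a_k)\,T_{s^{[d]}}^{-k}.$$
So everything reduces to showing that this last element vanishes in $\mathcal H(S,P)$.

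\emph{The crux.} Right-multiplying by the invertible element $T_{s^{[d]}}^{l}$, the vanishing of $\sum_k\iota(a_k)T_{s^{[d]}}^{-k}$ is equivalent to $\bar P(T_{s^{[d]}})=0$, where $\bar P(X):=\sum_{k=0}^l\iota(a_k)X^{l-k}$ is the ``$\iota$-reversed'' polynomial. Here the hypothesis on the roots enters: expanding the reciprocal polynomial gives $X^lP(1/X)=a_0\prod_i(X-\alpha_i^{-1})$, so $a_k=a_0(-1)^k e_k(\alpha_1^{-1},\dots,\alpha_l^{-1})$ with $e_k$ the $k$-th elementary symmetric function; applying $\iota$ and using $\iota(\alpha_i)=\alpha_i^{-1}$ (whence $\iota(\alpha_i^{-1})=\alpha_i$) turns this into $\iota(a_k)=(-1)^k\iota(a_0)\,e_k(\alpha_1,\dots,\alpha_l)$, i.e. $\bar P(X)=\iota(a_0)\prod_i(X-\alpha_i)=\tfrac{\iota(a_0)}{a_l}P(X)$. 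Since $P(T_s^{[d]})=0$ is the defining relation of $\mathcal H(S,P)$, we conclude $\bar P(T_{s^{[d]}})=0$. This identity $\bar P=\tfrac{\iota(a_0)}{a_l}P$ is the heart of the argument — it is exactly why one imposes ``$\iota$ sends each root to its inverse'' — and it is the step I expect to need the most care (in particular one must track the anti-multiplicativity of $\iota$ on the coefficients).

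\emph{Conclusion.} Granting $\Theta$, for $g\in\overline G_l$ with reduced expression $s_{i_1}\cdots s_{i_r}$ we get $\Theta(T_g)=\Theta(T_{s_{i_r}})\cdots\Theta(T_{s_{i_1}})=T_{s_{i_r}}^{-1}\cdots T_{s_{i_1}}^{-1}=(T_{s_{i_1}}\cdots T_{s_{i_r}})^{-1}=T_g^{-1}$, which is well defined and independent of the chosen reduced expression by Lemma \ref{gen}(ii); this is the asserted formula, and $\Theta$ restricts to $\iota$ on $R$ by construction. Finally $\Theta^2$ is an $R$-algebra endomorphism (a composite of two anti-homomorphisms) fixing $R$ pointwise since $\iota^2=\mathrm{id}$; applying $\Theta$ to $T_sT_s^{-1}=1$ gives $T_s^{-1}\Theta(T_s^{-1})=1$, hence $\Theta(T_s^{-1})=T_s$ and $\Theta^2(T_s)=T_s$; as the $T_s$ generate $\mathcal H(S,P)$ we conclude $\Theta^2=\mathrm{id}$, so $\Theta$ is an anti-involution, as claimed. (The same argument applies verbatim to the multi-parameter algebra $\mathcal H(S,\underline P)$ of Remark \ref{rmk:multi_hecke}.)
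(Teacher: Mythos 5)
Your proposal is correct and follows essentially the same route as the paper: one checks on the presentation of $\mathcal H(S,P)$ that $T_s\mapsto T_s^{-1}$ preserves the quadratic relations and annihilates $P(T_s^{[d]})$ using the hypothesis that $\iota$ inverts the roots, and then deduces involutivity from the anti-morphism property together with invertibility of the $T_g$. Your two local simplifications — verifying the quadratic relations by simply inverting $T_sT_{s*t}=T_tT_{t*s}$ rather than the paper's explicit $\gamma$-computation, and making the coefficient identity $\bar P=\tfrac{\iota(a_0)}{a_l}P$ explicit rather than multiplying the factored form of $P(T_s^{[d]})=0$ by $\tfrac{a_0}{a_l}\left(T_s^{[d]}\right)^{-l}$ as in the paper — are harmless variants of the same argument.
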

\begin{proof}
Denote by $\tilde\iota$ the map $\mathcal H(S,P)\to\mathcal H(S,P)$ defined by $\tilde\iota(\sum\limits_{g\in\overline G_l} c_gT_g)=\sum\limits_{g\in\overline G_l} \iota(c_g) T_g^{-1}$. 

We will need that $\iota$ must send $1\in R$ to 1: $\alpha_1^{-1}=\iota(\alpha_1)=\iota(1\cdot\alpha_1)=\iota(\alpha_1)\iota(1)=\alpha_1^{-1}\iota(1)$, thus $\iota(1)=1$. 
 
By the hypothesis that $P$ is split we have 
\begin{equation}\label{eq:hecke_poly_split} P(T_s^{[d]})=0 \Longleftrightarrow a_l\prod_{k=1}^l (T_s^{[d]}-\alpha_k)=0
\end{equation}
For the constant coefficient of $P$ we have $a_0=(-1)^la_l\prod_{k=1}^l \alpha_k$, so $\frac{a_l}{a_0}=(-1)^l\prod_{k=1}^l \alpha_k^{-1}$.

Multiplying Equation (\ref{eq:hecke_poly_split}) by $\frac{a_0}{a_l} \left((T_s^{[d]})^{-1}\right)^l$ yields 
$$a_l\prod_{k=1}^l (-\alpha_k^{-1})(T_s^{[d]})^{-1}(T_s^{[d]}-\alpha_k)=0\Longleftrightarrow a_l\prod_{k=1}^l \left((T_s^{[d]})^{-1}-\alpha_k^{-1}\right)=0.$$

This means precisely that $\tilde\iota(P(T_s^{[d]}))=0$.

Recall from Lemma \ref{comp} the notation $\gamma_s^k=(s*s)^{[kd-1]}$ and that $\gamma_{s*t}^{k_1}\gamma_s^{k_2}=\gamma_{t*s}^{k_2}\gamma_t^{k_1}$. Thus, by Proposition \ref{prop:hecke_inv}  we have$$T_{s*t}^{-1} T_{s}^{-1}=\left(\sum_{k=1}^l \frac{-a_{k}}{a_0}T_{\gamma_{s*t}^{k}}\right)\left(\sum_{k=1}^l \frac{-a_{k}}{a_0}\gamma_{s}^{k}\right)=\left(\sum_{k=1}^l \frac{-a_{k}}{a_0}\gamma_{t*s}^{k}\right)\left(\sum_{k=1}^l \frac{-a_{k}}{a_0}\gamma_{t}^{k}\right)=T_{t*s}^{-1}T_t^{-1}$$ So $\tilde\iota(T_sT_{s*t})=(T_sT_{s*t})^{-1}=T_{s*t}^{-1}T_{s}^{-1}=T_{t*s}^{-1}T_t^{-1}=\tilde\iota(T_tT_{t*s})$.

This shows that $\tilde\iota$ is a well-defined anti-morphism $\mathcal H(S,P)\to\mathcal H(S,P)$.

It remains to show that $\tilde\iota$ is an involution. For this, we will show that $\tilde\iota(\tilde\iota(T_g))$ is an inverse of $\tilde\iota(T_g)=T_g^{-1}$, which will imply that $\tilde\iota(\tilde\iota(T_g))=T_g$. As $\tilde\iota$ is an anti-morphism, we have $\tilde\iota(\tilde\iota(T_g))\tilde\iota(T_g)=\tilde\iota(T_g\tilde\iota(T_g))=\tilde\iota(T_g T_g^{-1})=\tilde\iota(1)=1$. So $\tilde\iota(\tilde\iota(T_g))=T_g$ by unicity of the inverse.
 
Moreover, by Theorem \ref{hecke-dim}, $(T_g)_{g\in\overline G}$ is a basis of $\mathcal H(S,P)$. We conclude that $\tilde\iota$ is an anti-automorphism. Thus $\iota$ is an anti-involution.
\end{proof}
\begin{rmk}
In the above proof, one has to be careful that $T_g^{-1}\neq T_{g^{-1}}$ as mentioned in Remark \ref{rmk:tg-1}. For instance, for the involutivity of $\tilde\iota$, it is not enough to write $\tilde\iota(\tilde\iota(T_g)=\tilde\iota(T_g^{-1})=(T_g^{-1})^{-1}=T_g$. Indeed, for $g=s\in S$, we have $\tilde\iota(T_s^{-1})=\sum_{k=1}^l \iota(\frac{-a_k}{a_0})\tilde\iota(T_{s*s}^{[kd-1]})=\sum_{k=1}^l \iota(\frac{-a_k}{a_0})\tilde\iota(T_{s*s}^{[kd-1]})$, which does not so obviously simplify to $T_s$. 
\end{rmk}
\begin{ex}
Consider $R=\mathbb Z[q_1^{\pm 1},\dots,q_l^{\pm 1},c^{\pm 1}]$. Let $P(X)=c(X-q_1)\dots (X-q_l)$ which satisfies the hypothesis of the theorem. It is an analogue of the "generic Hecke algebra" of a Coxeter group  (\cite{geck}).

Taking as $S=\{s,t\},\psi(s)=\psi(t)=12$ with $P(X)=(X+1)(X-q)=X^2-(q-1)X-q$, we have $T_s^{-1}=\frac{1-q}{q}t^{[1]}+\frac{1}{q}t^{[3]}$. 

We find $(T_s^{-1})^{[2]}=\frac{1}{q}T_t^{[2]}+\frac{1-q}{q}$ and $(T_s^{-1})^{[4]}=\frac{1-q}{q^2}T_t^{[2]}+\frac{q^2-q+1}{q^2}$. 

Thus $$(T_s^{-1})^{[4]}-(\frac{1}{q}-1)(T_s^{-1})^{[2]}-\frac{1}{q}=0$$
\end{ex}
\section{Semi-simplicity}\label{ss}

This section is based on \cite{curtis1,curtis2,geck} and inspired from the lecture notes \cite{digne,michel}. For details on character theory we refer to \cite{curtis1}. In this section we fix a commutative integral domain $R$ with field of fractions $F$, $K$ a field with an algebraic closure $\overline K$, $f\colon R\to K$ a ring morphism. Let $\mathcal H=\mathcal H(S,\underline P)$ be the Hecke algebra of a cycle set $S$, as in Remark \ref{rmk:multi_hecke}, with $\underline P=(P_i)_{1\leq i\leq n}\in R[X]^n$, $P_i(X)=\sum_{i=0}^{l_i} a_{i,k} X^i$ such that $P_i=P_j$ whenever $s_i$ and $s_j$ are in the same $\mathcal G$-orbit. From Theorem \ref{hecke-dim}, This algebra dimension is equal to the order of the quotient group $\overline G_{\underline l}=G/\langle s_i^{[l_id]}\rangle$.

\begin{defi}
Let $A$ be a non-trivial $K$-algebra. Then $A$ is called,
\begin{enumerate}[label=(\roman*)] 
\item simple if it contains no proper two-sided ideal,

\item semi-simple if it is isomorphic to a direct sum of simple algebras,

\item separable if for any extension $L/K$, $L\otimes A$ is a semi-simple algebra,

\item split if it is semi-simple and it is isomorphic to a finite sum of matrix algebras over $K$.
\index{Algebra}
\index{Algebra!Simple}
\index{Algebra!Semi-simple}
\index{Algebra!Separable}
\index{Algebra!Split}
\end{enumerate}
\end{defi}
An ideal $I$ of an algebra is called nilpotent if there exists a positive integer $n$ such that $I^n=0$, i.e. any product of $n$ elements of $I$ is 0. The following proposition helps us characterizing semi-simple algebras:
\begin{prop}[{\cite[Section 9]{bourbaki8}}]\label{prop:rad}\index{Algebra!Radical} Let $A$ be a finite dimensional $K$-algebra. Then there exists a unique largest nilpotent two-sided ideal, called the radical of $A$ and denoted rad$(A)$.

Moreover, the followings hold: 
\begin{enumerate}[label=(\roman*)]
\item rad$(A)$ is the set of elements of $A$ acting as 0 on every simple $A$-module (modules without proper submodules)
\item rad$(A)$ is the intersection of all maximal left ideals of $A$
\item $A$ is semi-simple if and only if rad$(A)=\{0\}$
\end{enumerate}
\end{prop}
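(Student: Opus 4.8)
The plan is to exhibit the radical concretely as the \emph{Jacobson radical} $J$, defined as the intersection of all maximal left ideals of $A$, and then to identify $J$ with the largest nilpotent two-sided ideal and with the common annihilator of all simple modules. First I would set up the dictionary between simple left $A$-modules and maximal left ideals: if $S$ is simple, picking $0\neq s\in S$ the map $a\mapsto as$ is onto with kernel a maximal left ideal $\mathfrak m$, so $S\cong A/\mathfrak m$; conversely $A/\mathfrak m$ is simple. Consequently $J=\bigcap_{\mathfrak m}\mathfrak m=\bigcap_{S}\operatorname{ann}_A(S)$, the second intersection over all simple left $A$-modules. Since each $\operatorname{ann}_A(S)$ is a two-sided ideal, $J$ is a two-sided ideal; this yields assertion (ii) once $J=\operatorname{rad}(A)$ is proved, while assertion (iii) is just the definition of $J$.

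Next I would show $J$ is nilpotent, which is where finite-dimensionality is used. The descending chain $A\supseteq J\supseteq J^{2}\supseteq\cdots$ of subspaces stabilizes, say $J^{n}=J^{n+1}=J\cdot J^{n}$. The tool is Nakayama's lemma in the form: a finite-dimensional left $A$-module $M$ with $JM=M$ is zero, since otherwise $M$ has a maximal proper submodule $M'$, the simple quotient $M/M'$ is killed by $J$, so $JM\subseteq M'\subsetneq M$, a contradiction. Applied to $M=J^{n}$ this gives $J^{n}=0$. Conversely every nilpotent left ideal $I$ sits inside $J$: for simple $S$ the submodule $IS$ is $0$ or $S$, and $IS=S$ would force $I^{k}S=S\neq 0$ against $I^{k}=0$, so $I\subseteq\operatorname{ann}_A(S)$ for all $S$, that is $I\subseteq J$. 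Hence $J$ is nilpotent and contains every nilpotent two-sided ideal, which proves the existence and uniqueness of a largest one and shows it equals $J$; this is assertion (i), and it finishes (ii).

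For assertion (iv), assume $\operatorname{rad}(A)=J=0$. A standard Artinian argument (take a finite intersection of maximal left ideals of minimal dimension) shows $\mathfrak m_1\cap\cdots\cap\mathfrak m_k=0$ for some finitely many $\mathfrak m_i$, so the diagonal map embeds $A$, as a left module, into the semisimple module $\bigoplus_{i=1}^{k}A/\mathfrak m_i$; submodules of semisimple modules being semisimple, the left regular module ${}_AA$ is a direct sum of simple left ideals. Decomposing $1$ along this sum and using $A\cong\operatorname{End}_A({}_AA)^{\mathrm{op}}$ together with Schur's lemma, the Wedderburn--Artin theorem gives $A\cong\bigoplus_{j}M_{n_j}(D_j)$ with each $D_j$ a finite-dimensional division $K$-algebra, hence a direct sum of simple algebras, i.e. $A$ is semi-simple in the sense of the Definition. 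Conversely, if $A\cong\bigoplus_{j}A_j$ with each $A_j$ simple, then a two-sided ideal $I$ of $A$ splits as $\bigoplus_j(I\cap A_j)$ and is nilpotent iff each summand is, so $\operatorname{rad}(A)=\bigoplus_j\operatorname{rad}(A_j)$; a simple algebra has only $0$ and itself as two-sided ideals and is not nilpotent (it is non-trivial and contains $1$), so $\operatorname{rad}(A_j)=0$ and thus $\operatorname{rad}(A)=0$.

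The step I expect to be the main obstacle is the inference in (iv) from ``${}_AA$ is a semisimple module'' to ``$A$ is a direct sum of simple algebras'': this is exactly the Wedderburn--Artin structure theorem, and it is the one place where I would either cite it as a black box or unwind the argument via Schur's lemma and the computation of endomorphism rings of isotypic components. The remaining ingredients --- the module/maximal-ideal correspondence, Nakayama's lemma, and the nilpotence bound coming from the Artinian (finite-dimensional) hypothesis --- are entirely routine.
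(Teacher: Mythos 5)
Your proof is correct, but note that the paper does not actually prove this proposition: it is quoted as a classical result with a citation to Bourbaki (Algèbre, Ch.~VIII, \S 9), so there is no in-paper argument to compare against. What you supply is the standard self-contained development: identify $\operatorname{rad}(A)$ with the Jacobson radical $J=\bigcap \mathfrak m=\bigcap_S\operatorname{ann}_A(S)$ via the dictionary between simple modules and maximal left ideals, use finite-dimensionality together with Nakayama's lemma to get $J$ nilpotent, show every nilpotent left ideal annihilates all simple modules and hence lies in $J$ (giving existence and uniqueness of the largest nilpotent two-sided ideal), and then derive the semi-simplicity criterion. One point you handle correctly and that genuinely matters here: the paper's notion of ``semi-simple'' is ``isomorphic to a direct sum of simple algebras'', not ``semisimple left regular module'', so the forward implication in item (iii) really does require Wedderburn--Artin (or its unwinding via isotypic components, Schur's lemma and $A\cong\operatorname{End}_A({}_AA)^{\mathrm{op}}$), exactly as you indicate; and your converse correctly uses that the central idempotents of a direct sum split any two-sided ideal and that a non-trivial unital simple algebra cannot be nilpotent. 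Two cosmetic slips only: the proposition has items (i)--(iii), so your ``assertion (iv)'' should read (iii) and your internal labels are shifted by one (you are counting the unnumbered existence claim as (i)); and the unitality of $A$, which you invoke when saying a simple algebra ``contains $1$'', is implicit in the paper's conventions but worth stating explicitly.
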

In the literature rad$(A)$ is also often called the Jacobson radical of $A$.

If $A$ is a finite-dimensional $K$-algebra and $a$ is an element of $A$, then we denote by $\Tr_{A/K}(a)$ the trace of the left-multiplication operator $A\to A$ defined by $b\mapsto ab$.

If $L/K$ is a field extension, we denote by $A^L$ the $L$-algebra $L\otimes A$.
\begin{lem}[{\cite[]{curtis1}}]\label{lem:tr}
Let $A$ be a finite dimensional $K$-algebra, $L/K$ a field extension. Then for any $a$ in $A$, $\Tr_{A^L/L}(1\otimes a)=\Tr_{A/K}(a)$.

Moreover, $\Tr_{A^L/L}$ is equal to $\text{id}\otimes \Tr_{A/K}$ defined by sending $l\otimes a$ to $l\Tr_{A/K}(a)$.
\end{lem}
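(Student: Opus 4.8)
The plan is to reduce everything to a basis computation, using that the trace of a $K$-linear (resp. $L$-linear) endomorphism is independent of the chosen basis and that scalar extension does not change the structure constants of the algebra.

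First I would fix a $K$-basis $b_1,\dots,b_m$ of $A$, so that $1\otimes b_1,\dots,1\otimes b_m$ is an $L$-basis of $A^L=L\otimes_K A$. For the left-multiplication operator $x\mapsto ax$ on $A$, write $ab_j=\sum_i m_{ij}b_i$ with $(m_{ij})\in M_m(K)$, so that $\Tr_{A/K}(a)=\sum_i m_{ii}$. Then $(1\otimes a)(1\otimes b_j)=1\otimes(ab_j)=\sum_i m_{ij}(1\otimes b_i)$, so the matrix of left-multiplication by $1\otimes a$ on $A^L$ in the basis $(1\otimes b_i)$ is the image of $(m_{ij})$ under $K\hookrightarrow L$. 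Hence $\Tr_{A^L/L}(1\otimes a)=\sum_i m_{ii}$, an element already lying in $K\subseteq L$, which is exactly $\Tr_{A/K}(a)$. This gives the first assertion.

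For the second assertion I would first observe that left-multiplication by $l\otimes a$ on $A^L$ equals $l$ times left-multiplication by $1\otimes a$, since $(l\otimes a)(1\otimes b_j)=l\cdot\big((1\otimes a)(1\otimes b_j)\big)$; therefore $\Tr_{A^L/L}(l\otimes a)=l\,\Tr_{A^L/L}(1\otimes a)=l\,\Tr_{A/K}(a)$, which is the value of $\mathrm{id}\otimes\Tr_{A/K}$ on $l\otimes a$. Since every element of $A^L$ is a finite sum $\sum_j l_j\otimes a_j$ and both $\Tr_{A^L/L}$ and $\mathrm{id}\otimes\Tr_{A/K}$ are additive, they agree on all of $A^L$. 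One should also note that $\mathrm{id}\otimes\Tr_{A/K}$ is well defined, i.e. the $K$-bilinear map $(l,a)\mapsto l\,\Tr_{A/K}(a)$ factors through the tensor product, which is immediate. There is no genuine obstacle here: the only points needing a little care are the basis-independence of the trace (so the computation does not depend on the chosen $b_i$) and the well-definedness of $\mathrm{id}\otimes\Tr_{A/K}$, both entirely standard.
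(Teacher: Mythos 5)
Your proof is correct and follows essentially the same route as the paper: fix a $K$-basis of $A$, note that the matrix of left-multiplication by $1\otimes a$ in the induced $L$-basis is the same matrix as over $K$, and then pull the scalar $l$ out for the second assertion. Your added remarks on additivity and the well-definedness of $\mathrm{id}\otimes\Tr_{A/K}$ only make the argument slightly more complete than the paper's version.
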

\begin{proof}
Let $(e_i)$ be a basis of $A$, so that $(1\otimes e_i)$ is a basis of $A^L$. For $a$ in $A$, write $ae_i=\sum\limits_j c_{ij}e_i$, so that $\Tr_{A/K}(a)=\sum\limits_i c_{ii}$. Then $(1\otimes a)(1\otimes e_i)=1\otimes ae_i=\sum\limits_j c_{ij}(1\otimes e_i)$, meaning that $\Tr_{A^L/L}(1\otimes a)=\sum\limits_i c_{ii}=\Tr_{A/K}(a)$.

Moreover, for any $x\in L$, we then have $(x\otimes a)(1\otimes e_i)=\sum\limits_j c_{ij}(x\otimes e_i)=\sum\limits c_{ij}x(1\otimes e_i)$. Thus $\Tr_{A^L/L}(x\otimes a)=x\sum\limits_i c_{ii}=x\Tr_{A/K}(a)$.
\end{proof}
The following lemma will be useful to restrict to the base field $K$ when studying the trace:
\begin{lem}\label{lem:tr_nd}
Let $A$ be a finite-dimensional $K$-algebra such that the bilinear map $T\colon A\times A\to K$ defined by $T(a,b)=\Tr_{A/K}(ab)$ is non-degenerate. Then for any field extension $L/K$ the bilinear map $T^L\colon A^L\otimes A^L$ defined by $T^L((l_1\otimes a),(l_2\otimes b))=\Tr_{A^L/L}(l_1l_2\otimes ab)$ is non-degenerate.
\end{lem}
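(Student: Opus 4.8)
The plan is to reduce everything to linear algebra over $K$ by choosing a $K$-basis of $A$, which automatically becomes an $L$-basis of $A^L$. First I would fix a $K$-basis $(e_1,\dots,e_m)$ of $A$; then $(1\otimes e_1,\dots,1\otimes e_m)$ is an $L$-basis of $A^L$. The bilinear form $T$ is represented, in this basis, by the Gram matrix $M=(T(e_i,e_j))_{i,j}=(\Tr_{A/K}(e_ie_j))_{i,j}\in M_m(K)$, and non-degeneracy of $T$ is exactly the statement that $M$ is invertible over $K$, i.e. $\det M\neq 0$ in $K$.

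The key step is to observe that $T^L$ is represented, in the basis $(1\otimes e_i)$, by the \emph{same} matrix $M$, now viewed as an element of $M_m(L)$ via the inclusion $K\hookrightarrow L$. Indeed, by Lemma \ref{lem:tr} we have $\Tr_{A^L/L}(1\otimes e_ie_j)=\Tr_{A/K}(e_ie_j)$, so the $(i,j)$-entry of the Gram matrix of $T^L$ is the image in $L$ of the $(i,j)$-entry of $M$. Since $\det M\neq 0$ in $K$ and $K\hookrightarrow L$ is an injective ring homomorphism, $\det M\neq 0$ in $L$ as well, so the Gram matrix of $T^L$ is invertible over $L$, which is precisely the assertion that $T^L$ is non-degenerate. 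Finally, the bilinearity of $T^L$ over $L$ together with Lemma \ref{lem:tr} (the ``$\Tr_{A^L/L}=\mathrm{id}\otimes\Tr_{A/K}$'' part) shows that the value of $T^L$ on arbitrary elements $\sum_i l_i\otimes a_i$ and $\sum_j l_j'\otimes b_j$ is determined by its values on basis vectors, so computing the Gram matrix in one basis suffices.

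I do not anticipate any real obstacle here: the only thing to be careful about is checking that $T^L$ is genuinely $L$-bilinear and well-defined on $A^L=L\otimes_K A$ (so that it is legitimate to compute it on a basis), which is immediate from its defining formula and Lemma \ref{lem:tr}. One should also note that the matrix of a symmetric-type bilinear form being invertible is equivalent to non-degeneracy in both the ``left'' and ``right'' senses, so no asymmetry issue arises. Thus the whole argument is: non-degeneracy $\Leftrightarrow$ invertibility of the Gram matrix $\Leftrightarrow$ $\det M\neq 0$, and $\det M\neq 0$ is preserved under the field extension $K\subseteq L$.
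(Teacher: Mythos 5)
Your proof is correct, but it takes a different route from the paper's. The paper argues element-wise: it takes $l\otimes a\in A^L$, uses non-degeneracy of $T$ to find $b\in A$ with $T(a,b)\neq 0$, and then computes $T^L(l\otimes a,1\otimes b)=l\,\Tr_{A/K}(ab)\neq 0$ via Lemma \ref{lem:tr}. You instead pass to the Gram matrix: non-degeneracy of $T$ is $\det M\neq 0$ in $K$, the matrix of $T^L$ in the basis $(1\otimes e_i)$ is the same $M$ by Lemma \ref{lem:tr}, and $\det M\neq 0$ survives the injection $K\hookrightarrow L$. Your version is actually the more watertight of the two: a general element of $A^L$ is a sum $\sum_i l_i\otimes a_i$, not a pure tensor, so the paper's argument as written only tests non-degeneracy against elementary tensors and would need a small extra step (e.g.\ writing the element in the basis $(1\otimes e_i)$, which essentially re-derives your Gram-matrix computation) to cover arbitrary elements. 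The paper's approach buys brevity and stays close to the 1-cocycle-free, coordinate-free spirit of Lemma \ref{lem:tr}; your approach buys a complete, basis-level verification with the standard determinant criterion, at the cost of fixing a basis. Both rest on the same key input, namely that $\Tr_{A^L/L}$ restricts to $\Tr_{A/K}$ on $1\otimes A$.
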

\begin{proof}
Let $l\otimes a\in L\otimes A$. As $T$ is non-degenerate, there exists $b\in A$ such that $T(a,b)\neq 0$. Then, by Lemma \ref{lem:tr}, we have $T^L((l\otimes a),(1\otimes b))=T^L(l\otimes ab)=l\text{Tr}_{A/K}(ab)\neq 0$. Thus $T^L$ is non-degenerate.
\end{proof}
\begin{prop}[{\cite[Exercice 7.6]{curtis1}}]\index{Algebra!Trace}
\label{trace}
Let $A$ be a finite dimensional $K$-algebra. If the bilinear form $T\colon A\times A\to K$ defined with the usual trace $T(a,b)=\Tr_{A/K}(ab)$ is non-degenerate, then $A$ is separable (and thus semi-simple).
\end{prop}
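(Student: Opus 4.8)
The plan is to deduce semi-simplicity from the radical criterion of Proposition \ref{prop:rad}(iii), and then bootstrap to separability using Lemma \ref{lem:tr_nd}.

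First I would show that $\text{rad}(A)=\{0\}$. Let $a\in\text{rad}(A)$ and let $b$ be an arbitrary element of $A$. Since $\text{rad}(A)$ is a two-sided ideal we have $ab\in\text{rad}(A)$, and since $\text{rad}(A)$ is a nilpotent ideal (Proposition \ref{prop:rad}), the element $ab$ is nilpotent; hence the left-multiplication operator $x\mapsto abx$ on $A$ is a nilpotent endomorphism, and a nilpotent endomorphism of a finite-dimensional vector space has zero trace over any field (it is conjugate to a strictly triangular matrix). Therefore $T(a,b)=\Tr_{A/K}(ab)=0$ for every $b\in A$, so non-degeneracy of $T$ forces $a=0$. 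Thus $\text{rad}(A)=\{0\}$, and $A$ is semi-simple by Proposition \ref{prop:rad}(iii).

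Next, for separability, I would note that the argument just given proves something slightly more general: \emph{any} finite-dimensional algebra over \emph{any} field, on which the trace form is non-degenerate, is semi-simple. So let $L/K$ be an arbitrary field extension. By Lemma \ref{lem:tr_nd} the form $T^{L}$ on $A^{L}=L\otimes A$ is non-degenerate, and $A^{L}$ is a finite-dimensional $L$-algebra, so the semi-simplicity argument of the previous paragraph applies verbatim with $(A^{L},L,T^{L})$ in place of $(A,K,T)$: one gets $\text{rad}(A^{L})=\{0\}$ and hence $A^{L}$ is semi-simple. Since this holds for every extension $L/K$, the algebra $A$ is separable (and taking $L=K$ recovers the semi-simplicity already established).

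The argument is essentially routine once the radical has been identified as the obstruction; the only point requiring care is the separability step, where one should not try to argue directly over $L$ but instead transport non-degeneracy via Lemma \ref{lem:tr_nd} and re-run the semi-simplicity proof — the needed ingredients, namely finite-dimensionality of $A^{L}$ and the existence and nilpotence of $\text{rad}(A^{L})$, are precisely what Lemma \ref{lem:tr_nd} and Proposition \ref{prop:rad} supply.
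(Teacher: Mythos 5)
Your proof is correct and follows essentially the same route as the paper: reduce separability to semi-simplicity via Lemma \ref{lem:tr_nd} (non-degeneracy is stable under field extension), then kill $\text{rad}(A)$ using that nilpotent elements have zero trace and the non-degeneracy of $T$. Your write-up is just a slightly more explicit version of the paper's argument, spelling out why nilpotents have trace zero and why the radical argument re-runs over $A^{L}$.
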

\begin{proof}
First we know that non-degeneracy is stable by field extension by Lemma \ref{lem:tr_nd}.

So it is enough to show that $A$ is semi-simple. As $A$ is finite dimensional, by Proposition \ref{prop:rad} $A$ is semi-simple iff rad($A$) is trivial. Also from Proposition \ref{prop:rad}, rad($A$) is the largest nilpotent ideal, so any element in it has trivial trace (any element is nilpotent). Thus as rad($A$) is an ideal, if $a\in\text{rad} A$ then, for any $b\in A$ we have $ab\in\text{rad}(A)$ and so $\Tr_{A/K}(ab)=0$. If $T$ is non-degenerate, this implies that $a=0$, finishing the proof.
\end{proof}
\begin{defi} A trace over a $K$-algebra $A$ is a map $\tau\colon H\to K$ such that $\tau(ab)=\tau(ba)$ for any $a,b$ in $K$. A trace $\tau$ is said to be symmetrizing if the map $(a,b)\mapsto\tau(ab)$ is non-degenerate.
\end{defi}
The following statement is a generalization of Lemma \ref{lem:tr_nd}:
\begin{prop}[{\cite[Proposition 8.7]{broueHecke}}]\label{prop:tr_char} If $A$ is a finite-dimensional algebra over a field $K$ and if $\tau$ is a symmetrizing trace over $A$ that is a linear combination of characters, then $A$ is separable.

In particular, if $\Tr_{A/K}$ is symmetrizing, then $A$ is separable.
\end{prop}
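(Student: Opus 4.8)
The plan is to reduce the separability statement to a semisimplicity statement valid over an arbitrary field extension, and then deduce semisimplicity by a radical argument generalizing the proof of Proposition \ref{trace}.

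First I would record that the hypothesis is stable under base change. Write $\tau=\sum_i c_i\chi_i$ with $\chi_i=\Tr\circ\rho_i$ the character of a finite-dimensional representation $\rho_i\colon A\to\mathrm{End}_K(V_i)$, and let $L/K$ be any extension. The $L$-linear extension $\tau^L$ of $\tau$ to $A^L=L\otimes A$ is again a trace: one checks $\tau^L(xy)=\tau^L(yx)$ on elements $x=l_1\otimes a$, $y=l_2\otimes b$ and extends bilinearly. Moreover $\tau^L=\sum_i c_i\chi_i^L$, where $\chi_i^L$ is the character of the extended representation $\mathrm{id}_L\otimes\rho_i$ on $L\otimes V_i$ (the matrix of $(\mathrm{id}_L\otimes\rho_i)(l\otimes a)$ in a basis coming from a $K$-basis of $V_i$ is $l$ times that of $\rho_i(a)$), so $\tau^L$ is again a linear combination of characters. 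Finally, the bilinear form $(x,y)\mapsto\tau^L(xy)$ on $A^L$ is non-degenerate: with respect to an $L$-basis $(1\otimes e_j)$ coming from a $K$-basis $(e_j)$ of $A$, its Gram matrix is the Gram matrix $(\tau(e_ie_j))_{i,j}$ of $(a,b)\mapsto\tau(ab)$, which is invertible over $K$ by hypothesis, hence invertible over $L$. Therefore it suffices to prove the following: a finite-dimensional algebra over a field carrying a symmetrizing trace that is a linear combination of characters is semisimple.

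Next I would prove that statement. By Proposition \ref{prop:rad}(iii) it is enough to show $\mathrm{rad}(A)=\{0\}$. The key point is that $\tau$ vanishes on $\mathrm{rad}(A)$. Indeed, let $c\in\mathrm{rad}(A)$ and let $M$ be any finite-dimensional $A$-module; choose a composition series $0=M_0\subset M_1\subset\cdots\subset M_k=M$. By Proposition \ref{prop:rad}(i), $\mathrm{rad}(A)$ acts as $0$ on each simple quotient $M_j/M_{j-1}$, so $cM_j\subseteq M_{j-1}$ for all $j$; in a basis adapted to this filtration the operator $\rho_M(c)$ is strictly block upper-triangular, whence $\chi_M(c)=\Tr\rho_M(c)=0$. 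Applying this to each $V_i$ gives $\tau(c)=\sum_i c_i\chi_i(c)=0$. Now take $a\in\mathrm{rad}(A)$; for every $b\in A$ we have $ab\in\mathrm{rad}(A)$, hence $\tau(ab)=0$, and non-degeneracy of $(x,y)\mapsto\tau(xy)$ forces $a=0$. Thus $\mathrm{rad}(A)=\{0\}$ and $A$ is semisimple. Combined with the first paragraph (applied to $A^L$ for every extension $L/K$), this shows $A$ is separable.

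Finally, the "in particular" is immediate: $\Tr_{A/K}$ is the character of the left regular representation of $A$ on itself, so it is a single character; if it is symmetrizing, the general statement applies and $A$ is separable (recovering and extending Proposition \ref{trace}). I expect the only delicate point to be the bookkeeping in the field-extension reduction — confirming that both "being a linear combination of characters" and non-degeneracy survive base change — while the core vanishing-on-the-radical argument is essentially the radical argument already used for Proposition \ref{trace}.
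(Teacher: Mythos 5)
Your proof is correct, and it is worth noting that the paper itself offers no argument for this statement: it is quoted from \cite[Proposition 8.7]{broueHecke} and used as a black box, so there is no internal proof to compare against. What you have written is a self-contained argument that extends the paper's own proofs of Lemma \ref{lem:tr_nd} and Proposition \ref{trace} in exactly the natural way: the base-change step via the Gram matrix of $(a,b)\mapsto\tau(ab)$ in a basis $(1\otimes e_j)$ is clean (and in fact tighter than the paper's pure-tensor argument in Lemma \ref{lem:tr_nd}, since invertibility of the Gram matrix handles arbitrary elements of $A^L$, not just elementary tensors), and the composition-series argument showing that characters of finite-dimensional modules vanish on $\mathrm{rad}(A)$ is the correct replacement for the ``nilpotent elements have trace zero'' step used in Proposition \ref{trace}; combined with Proposition \ref{prop:rad}(i) and (iii) and non-degeneracy it gives $\mathrm{rad}(A^L)=0$ for every extension $L/K$, i.e.\ separability in the sense the paper uses. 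The only implicit convention you rely on, which you should perhaps state, is that ``linear combination of characters'' means a combination, with coefficients in $K$, of characters of finite-dimensional $A$-modules --- this is what makes both the composition-series argument and the base-change of each $\chi_i$ to $\chi_i^L$ legitimate, and it agrees with Broué's usage. The ``in particular'' clause is handled correctly: $\Tr_{A/K}$ is the character of the left regular representation, so the general statement applies and recovers Proposition \ref{trace}.
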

\begin{cor}[{\cite[Exemple 2.10]{digne}}]\label{ringalg-sep} Let $G$ be a group and $K$ a field such that char($K$) does not divide $|G|$. Then the map $\tau\colon K[G]\to K$ defined by $\tau(\sum\limits_{g\in G} r_gT_g)=r_1$ (where $T_g$ is the standard basis of $K[G]$) is a symmetrizing trace and $K[G]$ is separable.
\end{cor}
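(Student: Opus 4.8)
The plan is to verify the three assertions in turn: that $\tau$ is a trace, that it is symmetrizing, and that $K[G]$ is separable, the last one following from Proposition \ref{prop:tr_char} once $\tau$ is recognized as a scalar multiple of the regular character. First I would check that $\tau$ is a trace. Since $\tau$ is $K$-linear and the $T_g$ form a basis of $K[G]$, it suffices to show $\tau(T_gT_h)=\tau(T_hT_g)$ for all $g,h\in G$. Now $\tau(T_gT_h)=\tau(T_{gh})$ is $1$ if $gh=1$ and $0$ otherwise; but $gh=1$ holds precisely when $h=g^{-1}$, which is exactly the condition $hg=1$, so $\tau(T_gT_h)=\tau(T_hT_g)$.

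Next, for the symmetrizing property I would write down the Gram matrix of the bilinear form $(a,b)\mapsto\tau(ab)$ in the basis $\{T_g\}_{g\in G}$. Its $(g,h)$-entry is $\tau(T_gT_h)=\delta_{h,g^{-1}}$, so this matrix is the permutation matrix of the involution $g\mapsto g^{-1}$ of $G$. A permutation matrix is invertible over any field, hence the form is non-degenerate and $\tau$ is symmetrizing.

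Finally, to obtain separability I would identify $\tau$ with the character $\chi$ of the left regular representation of $K[G]$, up to a scalar. Left multiplication by $T_g$ permutes the basis $\{T_h\}_{h\in G}$ via $h\mapsto gh$, which has a fixed point if and only if $g=1$; therefore $\Tr_{K[G]/K}(T_g)=\chi(T_g)$ equals $|G|$ when $g=1$ and $0$ otherwise, i.e. $\chi=|G|\cdot\tau$. Since $\mathrm{char}(K)$ does not divide $|G|$, the scalar $|G|$ is invertible in $K$, so $\tau=\tfrac{1}{|G|}\chi$ is a linear combination of (in fact, a scalar multiple of a single) character. As $\tau$ is also symmetrizing, Proposition \ref{prop:tr_char} applies and yields that $K[G]$ is separable.

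The argument is a chain of direct verifications rather than anything deep; the only place the hypothesis is genuinely needed is the final step, where $|G|$ must be invertible in $K$ to express $\tau$ (equivalently $\Tr_{K[G]/K}$) as a multiple of a character — without this the trace form on $K[G]$ can even be degenerate. So the "main obstacle" is really just keeping the bookkeeping straight and invoking Proposition \ref{prop:tr_char} with the correct ingredients.
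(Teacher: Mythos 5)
Your proposal is correct and follows essentially the same route as the paper: verify the trace property on basis elements, check non-degeneracy of the form $(a,b)\mapsto\tau(ab)$ (your Gram-matrix phrasing is just a repackaging of the paper's direct pairing against $T_h^{-1}$), compute $\Tr_{K[G]/K}=|G|\cdot\tau$ on the regular representation, and invoke Proposition \ref{prop:tr_char} using that $|G|$ is invertible in $K$. No gaps; nothing further is needed.
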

\begin{proof}
We have that $\tau(\sum\limits_{g\in G} r_g T_g)(\sum\limits_{h\in G} r'_h T_h)=\tau(\sum\limits_{g,h\in G} r_gr'_h T_gT_h)=\sum\limits_{g\in G} r_g r'_{g^{-1}}=\sum\limits_{h\in G} r'_h r_{h^{-1}}$, so $\tau(ab)=\tau(ba)$. Moreover, $\tau(T_gT_g^{-1})=\tau(T_1)=1$, so $\tau((\sum\limits_{g\in G} r_g T_g)T_h^{-1})=r_h$ is zero for every $h$ if and only if $r_h=0$ for every $h\in G$. Thus $\tau$ is non-degenerate, and so it is indeed a symmetrizing trace.

Then, the trace of the algebra $K[G]$ is given on the basis $(T_g)$ by $$\text{Tr}_{K[G]/K}(T_h\mapsto T_{gh})=\#\{h\mid T_{gh}=T_h\}=\begin{cases}\# G,\text{ if } g=1\\0,\text{  otherwise}\end{cases}=\# G\cdot\tau(T_g).$$ Thus $\text{Tr}_{K[G]/K}=\# G\tau$, which is not zero as char $K$ does not divide $\# G$. So $\tau=\frac{\text{Tr}_{K[G]/K}}{\# G}$ is a linear combination of character and finally, by Proposition \ref{prop:tr_char}, $K[G]$ is separable.
\end{proof}
Our goal is to be able to apply the following theorem:
\begin{thm}[{\cite[Tits Deformation Theorem 68.17]{curtis2}}]
\label{tits}
Let $A$ be a finite dimensional $R$-algebra, recall that we chose $F=\text{Frac}(R)$ and $f:R\to K$. If $K\otimes_R A$ and $F\otimes_R A$ (defined by $f$) are separable, then they have the same numerical invariants.

Moreover, let $\overline R$ be an integral closure of $R$ in $\overline K$ and $\overline f:\overline R\to \overline K$ be an extension of $f$. Then $\overline f$ induces a bijection of irreducible characters $\text{Irr}(\overline K\otimes A)\to\text{Irr}(\overline F\otimes A)$. 
\end{thm}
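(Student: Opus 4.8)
This is the classical Tits Deformation Theorem, quoted from \cite{curtis2}, so in practice it is invoked as a black box; for completeness let me indicate the mechanism one would reproduce, which runs through \emph{decomposition maps}. First I would pass to algebraically closed coefficients: let $\overline F$ be an algebraic closure of $F=\text{Frac}(R)$ and $\overline K$ one of $K$. Since $A$ is free of finite rank over $R$, the algebras $A_{\overline F}:=\overline F\otimes_R A$ and $A_{\overline K}:=\overline K\otimes_R A$ are finite dimensional, and separability is inherited under extension of scalars, so both are \emph{split} semisimple. Choose a valuation ring $\mathcal O\subseteq\overline F$ dominating the localization of $\overline R$ at a prime lying over $\ker f$, so that $\mathcal O$ has fraction field $\overline F$ and residue field identified with $\overline K$; set $A_{\mathcal O}=\mathcal O\otimes_R A$.

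Next I would build the decomposition matrix $D=(d_{V,W})$, with rows indexed by $\text{Irr}(A_{\overline F})$ and columns by $\text{Irr}(A_{\overline K})$. For a simple $A_{\overline F}$-module $V$, an $A_{\mathcal O}$-stable $\mathcal O$-lattice exists (rescale any lattice, using that $\mathcal O$ is a valuation ring and $A_{\mathcal O}$ is finitely generated); reducing it modulo the maximal ideal of $\mathcal O$ produces an $A_{\overline K}$-module whose multiset of composition factors is independent of the chosen lattice, by Brauer--Nesbitt, and $d_{V,W}\in\Z_{\ge 0}$ is the multiplicity of the simple $W$ in it. Moreover, for $a\in A$ the characteristic polynomial of left multiplication by $a$ on $A$ has coefficients in $R$, so $a$ is integral over $R$; hence every $A_{\overline F}$-character restricted to $A$ takes values in $\overline R$, and composing with $\overline f$ yields a Brauer character with $\overline f\circ\chi_V=\sum_W d_{V,W}\,\widehat{\chi}_W$, where $\widehat{\chi}_W$ is the Brauer character of $W$.

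The crucial input is the Cartan--decomposition identity $C=D^{\mathrm t}D$, where $C$ is the Cartan matrix of $A_{\overline K}$ (Brauer reciprocity, obtained by comparing indecomposable projective $A_{\mathcal O}$-modules with their generic and modular fibres). Since $A_{\overline K}$ is semisimple, $C$ is the identity, so $D^{\mathrm t}D=I$; a matrix with non-negative integer entries satisfying this must be a permutation matrix. Therefore $V\mapsto(\text{reduction of }V)$ is a bijection $\text{Irr}(A_{\overline F})\to\text{Irr}(A_{\overline K})$; it preserves dimensions (the rank of a free $\mathcal O$-lattice is unchanged under reduction), so $A_{\overline F}$ and $A_{\overline K}$ have the same Wedderburn type, i.e.\ the same numerical invariants, and $\overline f\circ\chi_V=\widehat{\chi}_W$ for the matching $W$, which is exactly the bijection of irreducible characters induced by $\overline f$ (its inverse being the map in the statement). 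Passing back down, the numerical invariants of $F\otimes_R A$ and of $K\otimes_R A$ are by definition those of their scalar extensions to splitting fields contained in $\overline F$ and $\overline K$, hence coincide as well. The main obstacle is infrastructural rather than conceptual: arranging $\mathcal O$ --- equivalently, choosing the integral extension $\overline R$ together with a sufficiently well-behaved prime over $\ker f$, which is where hypotheses such as ``$R$ integrally closed Noetherian'' enter --- so that the decomposition map is genuinely well defined, and then establishing $C=D^{\mathrm t}D$; both are standard in modular representation theory but carry essentially all the content of the theorem.
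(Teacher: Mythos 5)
The paper does not prove this statement: Theorem \ref{tits} is quoted verbatim from Curtis--Reiner (Theorem 68.17) and used as a black box, so your decision to invoke it as such is exactly what the paper does, and there is no in-paper argument to compare your sketch against.

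As for the optional sketch itself, it is a recognizable outline of a standard proof via decomposition maps, but two steps would need repair before it counts as a proof. First, to define the lifting map on projectives and obtain the reciprocity $C=D^{\mathrm t}D$ you need the valuation ring $\mathcal O$ to be complete (or at least Henselian) so that idempotents/projective modules lift from $A_{\overline K}$ to $A_{\mathcal O}$; merely choosing a valuation ring dominating a prime of $\overline R$ over $\ker f$, as you do, gives you the decomposition matrix $D$ but not the $e$-map, and this is more than an ``infrastructural'' footnote. In fact the usual Tits-style argument (this is essentially how the textbook treatments, e.g.\ Geck--Pfeiffer's account of the deformation theorem, proceed) avoids reciprocity altogether: since both fibres are split semisimple, $\dim V=\sum_W d_{V,W}\dim W$ together with $\sum_V(\dim V)^2=\dim A=\sum_W(\dim W)^2$ and the fact that every $W$ occurs in the reduction of the regular module already forces $D$ to be a permutation matrix by a convexity/counting argument. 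Second, even granting $D^{\mathrm t}D=I$, a non-negative integer matrix with that property need not be a permutation matrix --- it could be a tall matrix whose columns are distinct standard basis vectors, e.g.\ $\begin{pmatrix}1\\0\end{pmatrix}$ --- so you must also rule out zero rows of $D$; this follows from the dimension-preservation remark you make later (the reduction of a nonzero lattice is nonzero), but it has to be invoked at that point to conclude squareness and hence the bijection $\text{Irr}(\overline F\otimes A)\leftrightarrow\text{Irr}(\overline K\otimes A)$.
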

\begin{thm}\label{thm:invariants}
Let $K$ be a field of characteristic $p$. Suppose that $p$ does not divide  $d$, and $p$ does not divide $l_i$ for any $i$ (the degrees of each polynomial). Let $\underline q=(q_{i,k})_{1\leq i\leq n, 0\leq k\leq l_i}$ be a family of indeterminates such that $q_{i,k}=q_{j,k}$ whenever $s_i$ and $s_j$ are in the same orbit and $P_i(X)=\sum\limits_i a_{i,k} X^k\in K[\underline q][X]$. 
Then $K(\underline q)\otimes \mathcal H(S,\underline P)$ is separable and has the same numerical invariants as $K[\overline G_{\underline l}]$. 
\end{thm}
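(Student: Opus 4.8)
The strategy is to invoke Tits' Deformation Theorem (Theorem~\ref{tits}). Let $R=K[\underline q]$, localized so that each leading coefficient $q_{i,l_i}$ of $P_i$ is invertible (as needed for Theorem~\ref{hecke-dim}), and $F=\operatorname{Frac}(R)=K(\underline q)$. By Theorem~\ref{hecke-dim} together with Remark~\ref{rmk:multi_hecke}, $\mathcal H:=\mathcal H(S,\underline P)$ is a free $R$-module with basis $(T_g)_{g\in\overline G_{\underline l}}$, of rank $N:=\#\overline G_{\underline l}=\prod_{i=1}^n l_id$. The first step is to fix the specialization $f\colon R\to K$ sending the coefficients of each $P_i$ to those of $X^{l_i}-1$, i.e.\ $q_{i,l_i}\mapsto 1$, $q_{i,0}\mapsto -1$ and $q_{i,k}\mapsto 0$ for $0<k<l_i$ (this is defined on the localization since $q_{i,l_i}\mapsto 1$). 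Using $(s_i^{[d]})^{l_i}=s_i^{[l_id]}$ from Proposition~\ref{o(T)} and right-exactness of $K\otimes_R-$, one obtains $K\otimes_R\mathcal H=K[G]/\langle s_i^{[l_id]}-1\rangle_{1\le i\le n}=K[\overline G_{\underline l}]$, and by Lemma~\ref{gen} the induced basis $(1\otimes T_g)_g$ is identified with the standard basis of the group algebra. After this, it will remain to prove that both $K\otimes_R\mathcal H=K[\overline G_{\underline l}]$ and $F\otimes_R\mathcal H=K(\underline q)\otimes\mathcal H$ are separable, for then Theorem~\ref{tits} gives that they share the same numerical invariants, which is the assertion.

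Separability of $K[\overline G_{\underline l}]$ is immediate from Corollary~\ref{ringalg-sep}: the hypotheses $p\nmid d$ and $p\nmid l_i$ give $p\nmid N$. I will also record, for use below, that the computation in the proof of Corollary~\ref{ringalg-sep} shows $\Tr_{K[\overline G_{\underline l}]/K}=N\cdot\tau$ where $\tau(\sum_g r_gT_g)=r_1$ is symmetrizing; since $p\nmid N$, the trace bilinear form $(a,b)\mapsto\Tr_{K[\overline G_{\underline l}]/K}(ab)$ is therefore non-degenerate.

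For the separability of $F\otimes_R\mathcal H$ I would use Proposition~\ref{trace}, so the task reduces to showing that the trace form of $F\otimes_R\mathcal H$ is non-degenerate. Let $M=\big(\Tr_{\mathcal H/R}(T_gT_h)\big)_{g,h\in\overline G_{\underline l}}\in M_N(R)$ be the Gram matrix of the trace form of $\mathcal H$ in the basis $(T_g)$. Because the multiplication matrices of $\mathcal H$ in this basis have entries in $R$, the trace commutes with base change along $f$ (the argument of Lemma~\ref{lem:tr}), so $f(\det M)=\det f(M)$ is the Gram determinant of the trace form of $K\otimes_R\mathcal H=K[\overline G_{\underline l}]$ in its standard basis, which is nonzero by the previous paragraph. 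As $R$ is an integral domain, $\det M\neq 0$, i.e.\ the trace form of $F\otimes_R\mathcal H$ is non-degenerate; by Proposition~\ref{trace}, $F\otimes_R\mathcal H=K(\underline q)\otimes\mathcal H$ is separable, and Theorem~\ref{tits} concludes. The main obstacle is precisely this separability of the generic fibre: the argument above shifts it to the single specialization $f$, and the point that makes it work in characteristic $p$ is that $p\nmid N$ forces every irreducible dimension of $\overline G_{\underline l}$ to be prime to $p$ (equivalently, $N\tau$ is non-degenerate), so the trace form does not degenerate under specialization — a failure of the hypotheses $p\nmid d$ or $p\nmid l_i$ would break both this step and semisimplicity itself.
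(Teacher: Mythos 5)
Your proof is correct and follows essentially the same route as the paper: apply Tits' Deformation Theorem to the specialization $K[\underline q]\to K$ sending $\mathcal H(S,\underline P)$ to the group algebra $K[\overline G_{\underline l}]$, get separability of the special fibre from Corollary \ref{ringalg-sep} using $p\nmid \prod l_id$, and transfer non-degeneracy of the trace form from the special fibre to the generic fibre to get separability of $K(\underline q)\otimes\mathcal H$. Your version is in fact slightly more careful than the paper's: you make the Gram-determinant specialization argument explicit (the paper only asserts that the specialized trace form being non-degenerate implies the generic one is), you address the invertibility of the leading coefficients by localizing, and your specialization $q_{i,l_i}\mapsto 1$, $q_{i,0}\mapsto -1$ correctly produces $X^{l_i}-1$ and hence $s_i^{[l_id]}=1$, whereas the paper's stated choice $f(q_{i,0})=f(q_{i,l_i})=1$ literally gives $X^{l_i}+1$ (an evident sign slip with the same intent).
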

\begin{proof}
Consider the context of Theorem \ref{tits} with $A=\mathcal H(S,\underline P)$, $R=K[\underline q]$, $F=\text{Frac}(R)=K(\underline q)$. We define $f:R\to K$ by $f(q_{i,0})=f(q_{i,l_i})=1$ and otherwise $f(q_{i,k})=0$, so that the specialization given by $f$ yields the algebra $K[\overline G_{\underline l}]=K[G]/(T_{s_i}^{[l_id]}-1)$. 

First, by Corollary \ref{ringalg-sep}, $K\otimes A=K[G_{\underline l}]$ is separable when $\text{char}(K)$ does not divide $\#\overline G_{\underline l}=\prod\limits_{i=1}^n (l_id)
$.

Then, as $R$ is an integral domain, $F=\text{Frac}(R)$ is a field, so
$F\otimes A=K(\underline q)\otimes \mathcal H(S,\underline P)$. We want to show that $\Tr_{F\otimes A/F}$ is symmetrizing, so that we can apply \ref{prop:tr_char} to have that $F\otimes A$ is separable. By Theorem \ref{hecke-dim}, $(T_g)_{g\in\overline G_l}$ is a basis of $A=\mathcal H(S,\underline P)$. So $(1\otimes T_g)$ is a basis of $F\otimes A$. Moreover, $\Tr_{F\otimes A/F}$ specializes to $\Tr_{K[\overline G_{\underline l}]/K}$, which is symmetrizing by Corollary \ref{ringalg-sep}. We have $\Tr_{F\otimes A/F}((1\otimes T_g)(1\otimes T_h))=\Tr_{F\otimes A/F}(1\otimes T_gT_h)=1\otimes\Tr_{A/K}(T_gT_h)$ by Lemma \ref{lem:tr}. As $\Tr_{A/K}$ specializes to $\Tr_{K[\overline G_{\underline l}]/K}$ which is non-degenerate, $\Tr_{F\otimes A/F}$ is also non-degenerate and thus symmetrizing. 

The conditions of Theorem \ref{tits} are satisfied, meaning that $F\otimes A=K(\underline q)\otimes\mathcal H(S,\underline P)$ and $K\otimes A=K[\overline G_{\underline l}]$ have the same numerical invariants.
\end{proof}
\begin{cor}\label{cor:tits_C}
If $\mathcal H(S,\underline P)$ is defined over $\C[\underline q]$, then $\C(\underline q)\otimes \mathcal H(S,\underline P)$ and $\C[\overline G_{\underline l}]$ have the same numerical invariants.

Moreover, we have a bijection $\text{Irr}(\C[\overline G_{\underline l}])\to\text{Irr}\left(\overline{\C(\underline l})\otimes\mathcal H(S,\underline P)\right)$.
\end{cor}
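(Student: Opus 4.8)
The plan is to obtain Corollary \ref{cor:tits_C} as the specialization of Theorem \ref{thm:invariants} to the field $K=\C$, together with the last assertion of Theorem \ref{tits} applied to the integral closure of $R=\C[\underline q]$ in $\overline{\C(\underline q)}$.

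First I would note that $\C$ has characteristic $0$, so the hypotheses of Theorem \ref{thm:invariants} (that $p$ divides neither $d$ nor any $l_i$) are vacuously satisfied. Hence Theorem \ref{thm:invariants} applies verbatim with $K=\C$: the algebra $\C(\underline q)\otimes\mathcal H(S,\underline P)$ is separable and has the same numerical invariants as $\C[\overline G_{\underline l}]$. This is exactly the first claim of the corollary; there is really nothing to add beyond invoking the theorem.

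For the bijection of irreducible characters, I would return to the setup of the proof of Theorem \ref{thm:invariants}: $A=\mathcal H(S,\underline P)$, $R=\C[\underline q]$, $F=\C(\underline q)$, and $f\colon R\to\C$ the specialization sending $q_{i,0},q_{i,l_i}$ to $1$ and the remaining $q_{i,k}$ to $0$, so that $\C\otimes_R A=\C[\overline G_{\underline l}]$. Both $\C\otimes_R A$ and $F\otimes_R A$ are separable (by Corollary \ref{ringalg-sep} and the trace argument in Theorem \ref{thm:invariants} respectively), so the hypotheses of the Tits Deformation Theorem are met. Let $\overline R$ be the integral closure of $R$ in the algebraic closure $\overline{\C(\underline q)}$, and let $\overline f\colon\overline R\to\overline\C=\C$ be an extension of $f$ (which exists since $\C$ is algebraically closed). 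Then the second part of Theorem \ref{tits} gives a bijection $\mathrm{Irr}(\overline\C\otimes A)\to\mathrm{Irr}(\overline F\otimes A)$, i.e. $\mathrm{Irr}(\C[\overline G_{\underline l}])\to\mathrm{Irr}\bigl(\overline{\C(\underline q)}\otimes\mathcal H(S,\underline P)\bigr)$, which is the asserted bijection (up to the harmless typo $\underline l$ for $\underline q$ in the statement, and using that $\C$ is already algebraically closed so $\overline\C\otimes\C[\overline G_{\underline l}]=\C[\overline G_{\underline l}]$).

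The main obstacle is essentially bookkeeping rather than mathematics: one must make sure the separability hypotheses of Theorem \ref{tits} are genuinely in force for the ring $R=\C[\underline q]$ and its fraction field — which is precisely what Theorem \ref{thm:invariants} established — and that the specialization $f$ is the one whose image is the group ring $\C[\overline G_{\underline l}]$, so that "same numerical invariants" is the correct conclusion and the character bijection lands in the right algebras. No new estimates or constructions are needed; the work has already been done in Theorem \ref{thm:invariants}, and the corollary is just the instantiation $K=\C$ plus a citation of the character-bijection half of the Tits Deformation Theorem.
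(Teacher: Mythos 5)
Your proposal is correct and follows essentially the same route as the paper: invoke Theorem \ref{thm:invariants} with $K=\C$ (characteristic $0$ makes its hypotheses vacuous) for the numerical invariants, then apply the character-bijection part of the Tits Deformation Theorem \ref{tits} with $R=\C[\underline q]$, $F=\C(\underline q)$, $K=\C=\overline K$ and the specialization onto $\C[\overline G_{\underline l}]$. Your remarks on the typo $\underline l$ versus $\underline q$ and on $\overline\C=\C$ are accurate and harmless.
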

\begin{proof}
We apply Theorem \ref{tits} with: $R=\C[\underline q]$, $A=\mathcal H(S,\underline P)$, $F=\C(\underline q)$, $K=\C=\overline K$ and $K\otimes A=\C[\overline G_{\underline l}]$. Theorem \ref{thm:invariants} already tells us that $\C(\underline q)\otimes \mathcal H(S,\underline P)$ and $\C[\overline G_{\underline l}]$ have the same numerical invariants. Moreover, as $K=\C=\overline K$, the last part of Theorem \ref{tits} says that the specialization $\mathcal H(S,\underline P)\to\C[\overline G_{\underline l}]$ induces a bijection $\text{Irr}(\C[\overline G_{\underline l}])\to\text{Irr}(\overline{\C(\underline l})\otimes\mathcal H(S,\underline P))$.
\end{proof}
\section{Two-generated Cyclic group}\label{ss:2gen}
At the beginning of Section \ref{sec:hecke_intro}, we mentioned how the naive definition of a Hecke algebra does not work in general, and we developed a different approach that provides the expected results. However, we also mentioned that the naive approach does work for a very particular solution of size. For this solution, the structure group is $\langle a,b\mid a^2=b^2\rangle$ and the germ is $\langle a,b\mid a^2=b^2, ab=ba=1\rangle\simeq \mathbb Z/4\mathbb Z$ with algebra $R\langle T_a,T_b\mid T_a^2=T_b^2, T_aT_b=T_bT_a=p(T_a+T_b)+q\rangle$ with some $p,q$ in $R$. The goal of this section is to prove that in this particular case, the Hecke algebra has a basis indexed by the germ. 

Moreover, we study a family of groups for which this approach works: torus knot group, which are the only knot groups (fundamental groups of complements of knots in the 3-sphere) which are Garside groups (\cite{gaussian,gobetTorus,gobetTorusGarside}). For $n$ and $m$ integers strictly greater than 1, the $n,m$-torus knot monoid (resp. group) is defined by the presentation $\mathcal T_{n,m}=\langle a,b\mid a^n=b^m\rangle$, and has as a Garside element $\Delta=a^n=b^m$.

The goal of this section is to show that $\mathcal T_{n,m}$ has a Garside germ given by $\overline{\mathcal  T_{n,m}}=\mathcal T_{n,m}/\langle ab=ba=1\rangle\simeq \mathbb Z/(n+m)\mathbb Z$, and show that we have a Hecke algebra $\mathcal H_{n,m}(p,q)=R\langle T_a,T_b\mid T_a^n=T_b^m, T_aT_b=T_bT_a=p(T_a+T_b)+q\rangle$, i.e. that $(T_g)_{g\in\overline{\mathcal T_{n,m}}}$ is a basis of $\mathcal H_{n,m}(p,q)$.


\begin{prop}\label{prop:tor_germ} $\mathcal T_{n,m}$ is a Garside group with germ $\overline{\mathcal T_{n,m}}\cong\Z/(n+m)\Z$.
\end{prop}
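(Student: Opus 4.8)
The plan is to treat the two claims separately: that $\mathcal{T}_{n,m}$ is a Garside group, and that its germ — the divisor lattice of the Garside element — is the cyclic group $\mathbb{Z}/(n+m)\mathbb{Z}$ obtained as $\mathcal{T}_{n,m}/\langle ab=ba=1\rangle$. For the first claim I would simply recall, following \cite{gaussian,gobetTorus,gobetTorusGarside}, that the monoid $\mathcal{T}_{n,m}^+=\langle a,b\mid a^n=b^m\rangle^+$ is a cancellative Garside monoid with central Garside element $\Delta=a^n=b^m$, and that $\mathcal{T}_{n,m}$ is its group of fractions, hence a Garside group.

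The heart of the argument is to compute the set of divisors of $\Delta$ in $\mathcal{T}_{n,m}^+$. Using centrality of $\Delta$, I would first establish a normal form: every positive element is uniquely $\Delta^k w_0$ with $k\ge 0$ and $w_0$ a positive word having no subword $a^n$ or $b^m$ (equivalently, $w_0$ in normal form for the amalgam $\mathcal{T}_{n,m}=\langle a\rangle *_{\langle a^n=b^m\rangle}\langle b\rangle$ with coset representatives $a^i$ and $b^j$). Combining this normal form with cancellativity, a case analysis then shows that the divisors of $\Delta$ are exactly
$$D=\{1,a,a^2,\dots,a^{n-1}\}\cup\{b,b^2,\dots,b^{m-1}\}\cup\{\Delta\},$$
a set of $n+m$ elements; as a lattice $D$ is the union of the two chains $1\prec a\prec\cdots\prec a^n=\Delta$ and $1\prec b\prec\cdots\prec b^m=\Delta$, with $\mathrm{lcm}(a^i,b^j)=\Delta$ and $\gcd(a^i,b^j)=1$ whenever $0<i<n$ and $0<j<m$. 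With the partial product inherited from $\mathcal{T}_{n,m}^+$ this is, by definition, the germ of the Garside group $\mathcal{T}_{n,m}$.

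It then remains to identify this germ with $\mathbb{Z}/(n+m)\mathbb{Z}$. Imposing $ab=ba=1$ forces $b=a^{-1}$, so the relation $a^n=b^m$ becomes $a^{n+m}=1$ and $\overline{\mathcal{T}_{n,m}}=\mathcal{T}_{n,m}/\langle ab=ba=1\rangle\cong\mathbb{Z}/(n+m)\mathbb{Z}$. Writing this group additively with $\pi(a)=1$ and $\pi(b)=-1$ for the quotient map $\pi$, I would observe that $\pi$ restricted to $D$ sends $a^i\mapsto i$ for $0\le i\le n$ and $b^j\mapsto -j$ for $1\le j\le m-1$; the images $\{0,1,\dots,n\}$ and $\{n+1,\dots,n+m-1\}$ partition $\mathbb{Z}/(n+m)\mathbb{Z}$, so $\pi|_D$ is a bijection. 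Finally one checks it is an isomorphism of germs: the partial product on $D$ (the product in $\mathcal{T}_{n,m}^+$ when it remains in $D$) is carried to the corresponding partial product on $\mathbb{Z}/(n+m)\mathbb{Z}$, for example $a^i\cdot a^{i'}$ lies in $D$ exactly when $i+i'\le n$ and then maps to $i+i'$. Hence $\mathcal{T}_{n,m}$ is the group of the germ $\mathbb{Z}/(n+m)\mathbb{Z}$, as claimed.

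I expect the main obstacle to be the computation of $D$ in the second step, specifically ruling out ``mixed'' divisors such as $a^i b^j$: via the normal form $\Delta^k w_0$ this amounts to showing that a positive word of syllable length at least $2$ cannot left-divide a power of $\Delta$ with quotient of syllable length $0$, which follows from cancellativity together with the amalgamated-product normal form but needs some care; alternatively, the list of simple elements of $\mathcal{T}_{n,m}$ can be quoted directly from \cite{gobetTorusGarside}.
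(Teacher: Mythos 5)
Your route diverges from the paper's: the paper verifies Dehornoy's presentation criterion directly, namely that $\langle \overline{\mathcal T_{n,m}}\mid g\cdot h=gh \text{ when } \overline\ell(gh)=\overline\ell(g)+\overline\ell(h)\rangle$ reduces to $\langle a,b\mid a^n=b^m\rangle$, by a case analysis of all products $\overline a^i\overline a^j$, $\overline b^i\overline b^j$, $\overline a^i\overline b^j$ with respect to the weighted length $\ell(a)=m$, $\ell(b)=n$. You instead take the set of simples $D=\mathrm{Div}(\Delta)$ (which the paper simply quotes from \cite[Example 4]{gaussian}, as you also allow), view it with its partial product as a Garside-theoretic germ, and transport that structure along the bijection $\pi|_D$ to $\Z/(n+m)\Z$. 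This is a reasonable alternative in outline, but as written the real content is hidden in the sentence ``one checks it is an isomorphism of germs'', and there is a gap there.

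Concretely: the proposition, in the sense used throughout the paper, is about the quotient \emph{group} $\overline{\mathcal T_{n,m}}$ equipped with a length function, and asserts that the products which are length-additive for that length present $\mathcal T_{n,m}$. You never specify the length function, and that choice is exactly where the subtlety lies: the paper takes $\ell(a)=m$, $\ell(b)=n$ (so $\ell(\Delta)=nm$) and the induced $\overline\ell$; with a naive word length the criterion fails. Moreover, transporting the partial product from $D$ only tells you which relations you would like to impose; you must still show these are \emph{exactly} the length-additive pairs in $\Z/(n+m)\Z$. The direction ``product defined in $D$ $\Rightarrow$ lengths add'' is immediate (length is additive in the monoid), but the converse is the actual work: one must rule out the mixed pairs $(\overline a^i,\overline b^j)$, whose product in the cyclic group is $\overline a^{i-j}$ or $\overline b^{j-i}$ and has length strictly smaller than $im+jn$, and the pairs with $i+j>n$ (resp. $>m$), whose product would require length exceeding the maximum $nm$. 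Your single example ($a^i\cdot a^{i'}$ with $i+i'\le n$) covers only the easy case. Without these verifications --- which are precisely the paper's case analysis --- you have not shown that the length-additive presentation of $\Z/(n+m)\Z$ returns $\mathcal T_{n,m}$; you have only shown it for the partial structure transported from $D$, and even that step silently invokes the general fact that a Garside group is presented by its simples with their partial products, which should be quoted explicitly (e.g.\ from \cite{gaussian} or \cite{gobetTorusGarside}) rather than taken as the definition of germ used in this paper.
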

\begin{proof}
It is shown in \cite[Example 4]{gaussian} that, with the given presentation, $\mathcal T_{n,m}$ is a Garside group, with a Garside element $\Delta=a^n=b^m$ and $$\text{Div}(\Delta)=\{1,a,\dots,a^n=b^m,b^{m-1},b^{m-2},\dots,b\}.$$ The additive length $\ell\colon\mathcal T_{n,m}\to \N$ can be obtained by setting $\ell(a)=m,\ell(b)=n$, so that $\ell(a^n)=nm=\ell(b^m)$.

On the other hand,
\begin{flalign*}
\overline{\mathcal T_{n,m}}&\simeq \langle \overline a,\overline b\mid \overline a^n=\overline b^m,\overline a\overline b=\overline b\overline a=1\rangle \simeq \langle \overline a,\overline b\mid \overline a^n=\overline b^m,\overline a=\overline b^{-1}\rangle\simeq \langle \overline a\mid \overline a^n=\overline a^{-m}\rangle&\\
& \simeq \Z/(n+m)\Z=\{1,\overline a,\dots,\overline a^n=\overline b^m,\overline b^{m-1},\overline b^{m-2},\dots,\overline b\}.&
\end{flalign*}
Thus we have a bijection Div$(\Delta)\to\overline{\mathcal T_{n,m}}$ sending $a$ (resp. $b$) to $\overline a$ (resp. $\overline b$).

Let $\overline\ell$ be the induced map of $\ell$ in $\overline{\mathcal T_{n,m}}$, i.e. $\overline\ell(\overline a)=m,\overline\ell(\overline b)=n$.

To show that $\overline{\mathcal T_{n,m}}$ is a Garside germ of $\mathcal T_{n,m}$, we need to show that $$\mathcal T_{n,m}\cong \langle \overline{\mathcal T_{n,m}}\mid \forall g,h\in\overline{\mathcal T_{n,m}}, g\cdot h=gh \text{ when } \overline\ell(gh)=\overline\ell(g)+\overline\ell(h)\rangle.$$
We will prove the isomorphism by showing that the presentation on the right reduces to the presentation of  $\mathcal T_{n,m}$ as $\langle a,b\mid a^n=b^m\rangle$.

As $\{\overline a,\overline b\}\subset\overline{\mathcal T_{n,m}}$, $\overline{\mathcal T_{n,m}}$ generates $\mathcal T_{n,m}$. Now for the relations, we have to consider the products $\overline a^i\overline a^j,\overline b^i\overline b^j$ and $\overline a^i\overline b^j$:

We have $\overline\ell(\overline a^i)+\overline\ell(\overline a^j)=im+jm=(i+j)m$ for $1\leq i,j\leq n$. If $i+j\leq n$, then $\overline\ell(\overline a^i\overline a^j)=\overline\ell(\overline a^{i+j})=(i+j)m$. Thus we can omit $\overline a^i$ for $2\leq i\leq n$ from the generators. The same holds for $\overline b^j$, as $\overline\ell(\overline b^i)+\overline\ell(\overline b^j)=in+jn=(i+j)n=\overline\ell(\overline b^{i+j})$, if $i+j\leq m$. Thus we can omit $\overline b^i$ for $2\leq i\leq m$ from the generators. The particular case of $\overline b\overline b^{m-1}=\overline b^m=\overline a^n$ with $\overline\ell(\overline b^m)=nm=\overline\ell(\overline a^n)$ recovers the relation $\overline a^n=\overline b^m$.

However, the longest length in $\overline{\mathcal T_{n,m}}$ is $\overline\ell(\overline a^n)=\overline\ell(\overline b^m)=nm$. So if $i+j>n$, $\overline\ell(\overline a^i)+\overline\ell(\overline a^j)=in+jn=(i+j)m>nm$, so there is no relation for this case. The same also holds for $\overline b$ whenever $i+j>m$.

Finally, $\overline\ell(\overline a^i)+\overline\ell(\overline b^j)=im+jn$ for $1\leq i\leq n, 1\leq j\leq m$. But $\overline a=\overline b^{-1}$, so $\overline\ell(\overline a^i\overline b^j)=\begin{cases}\overline\ell(\overline a^{i-j})=(i-j)m,\text{ if } i\geq j\\\overline\ell(\overline b^{j-i})=(j-i)n,\text{ if } i<j\end{cases}$. In both cases this is smaller than $im+jn$, so there is no relation.

From this, we conclude that the only relation left that occurs from $\overline{\mathcal T_{n,m}}$ is $
\overline a^n=\overline b^m$, showing the desired result.
\end{proof}
Now consider $\mathcal H_{n,m}(p,q)=R\langle T_a,T_b\mid T_a^n=T_b^m, T_aT_b=T_bT_a=p(T_a+T_b)+q\rangle$ for some $p,q$ in $R$.
\begin{lem} The followings hold: 
\begin{enumerate}[label=(\roman*)]
\item In $\mathcal H_{n,m}(p,q)$ we have,
\begin{enumerate}
\item $T_aT_b^k=p^{k-1}q+p^kT_a+\sum\limits_{i=1}^{k-1} (p^2+q)p^{k-i-1}T_b^i+pT_b^k$, for $1\leq k\leq m$ 
\item $T_bT_a^k=p^{k-1}q+p^kT_b+\sum\limits_{i=1}^{k-1} (p^2+q)p^{k-i-1}T_a^i+pT_a^k$, for $1\leq k\leq n$
\end{enumerate}
\item $(T_g)_{g\in\overline{\mathcal T_{n,m}}}$ generates $\mathcal H_{n,m}(p,q)$
\end{enumerate}
\end{lem}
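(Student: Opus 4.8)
The plan is to handle the two parts separately: part (i) is a direct induction, and part (ii) is a reduction argument that uses part (i) only in one place.

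For part (i), I would prove formula (a) by induction on $k$. The base case $k=1$ is exactly the defining relation $T_aT_b=p(T_a+T_b)+q=p^{0}q+p^{1}T_a+pT_b$, with the middle sum $\sum_{i=1}^{0}$ empty. For the inductive step I would write $T_aT_b^{k+1}=(T_aT_b^{k})T_b$, substitute the inductive formula for $T_aT_b^{k}$, distribute $T_b$ on the right, and replace the single term $p^{k}(T_aT_b)$ by $p^{k}(p(T_a+T_b)+q)=p^{k+1}T_a+p^{k+1}T_b+p^{k}q$. After reindexing $\sum_{i=1}^{k-1}(p^2+q)p^{k-i-1}T_b^{i+1}$ as $\sum_{i=2}^{k}(p^2+q)p^{k-i}T_b^{i}$ and collecting, the only nontrivial bookkeeping is the coefficient of $T_b^{1}$, which becomes $p^{k-1}q+p^{k+1}=(p^2+q)p^{k-1}$, matching the claimed formula at level $k+1$. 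Formula (b) then follows from (a) by the symmetry of the presentation $\mathcal H_{n,m}(p,q)$ under exchanging $(T_a,n)$ with $(T_b,m)$, since the relations, and in particular $T_aT_b=T_bT_a$, are invariant under this swap.

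For part (ii), recall that $\mathcal H_{n,m}(p,q)$ is by definition generated as an $R$-algebra by $T_a$ and $T_b$, hence as an $R$-module by all words in $T_a,T_b$. Let $V$ be the $R$-submodule spanned by $\{T_g\mid g\in\overline{\mathcal T_{n,m}}\}$; under the bijection $\mathrm{Div}(\Delta)\to\overline{\mathcal T_{n,m}}$ of Proposition \ref{prop:tor_germ}, this is the span of $M=\{1,T_a,\dots,T_a^{n},T_b,\dots,T_b^{m-1}\}$, where one uses the relation $T_a^{n}=T_b^{m}$ so that the list has exactly $n+m$ entries. It suffices to show $V$ is stable under right multiplication by $T_a$ and by $T_b$; since $1\in V$, iterating then shows $V$ contains every word in $T_a,T_b$, so $V=\mathcal H_{n,m}(p,q)$. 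By the $a\leftrightarrow b$ symmetry it is enough to prove $VT_a\subseteq V$, i.e. $T_a^{i}T_a\in V$ for $0\le i\le n$ and $T_b^{j}T_a\in V$ for $1\le j\le m-1$. The first family is immediate for $i<n$, and for $i=n$ one writes $T_a^{n+1}=T_aT_b^{m}$ and applies formula (i)(a) with $k=m$, which expresses it as an $R$-combination of $1$, $T_a$, $T_b^{1},\dots,T_b^{m-1}$ and $T_b^{m}=T_a^{n}$, all in $M$. For the second family, I would induct on $j$: the base case $j=1$ is $T_bT_a=q+p(T_a+T_b)\in V$, and for $j\ge 2$ one writes $T_b^{j}T_a=T_b^{j-1}(q+pT_a+pT_b)=qT_b^{j-1}+pT_b^{j-1}T_a+pT_b^{j}$, where $T_b^{j-1},T_b^{j}\in M$ (as $j\le m-1$) and $T_b^{j-1}T_a\in V$ by the inductive hypothesis. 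The symmetric argument for $VT_b\subseteq V$ is identical, now invoking formula (i)(b) with $k=n$ to reduce $T_b^{m+1}=T_bT_a^{n}$, which explains why both formulas in (i) are needed.

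The only real subtlety I anticipate is in part (ii): one should resist inducting on word length, which does not terminate cleanly when $m\ne n$ (reducing $T_a^{n+1}$ reintroduces powers of $T_b$ up to $T_b^{m-1}$), and instead argue that $V$ is a right ideal containing $1$. One must also identify the spanning family $\{T_g\}_{g\in\overline{\mathcal T_{n,m}}}$ correctly with $M$, bearing in mind that $T_a^{n}$ and $T_b^{m}$ denote the same element of $\mathcal H_{n,m}(p,q)$. Everything else is routine computation.
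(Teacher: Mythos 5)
Your proof is correct and takes essentially the same route as the paper: the same induction on $k$ for (i), with (b) obtained by the $a\leftrightarrow b$ symmetry of the presentation, and for (ii) the same reduction of high powers via $T_a^{n+1}=T_aT_b^m$ and $T_b^{m+1}=T_bT_a^n$ using the formulas of (i). Your part (ii) simply makes the paper's terse "reduce any product" claim precise by exhibiting the span of $\{T_g\}_{g\in\overline{\mathcal T_{n,m}}}$ as a submodule containing $1$ and stable under right multiplication by $T_a$ and $T_b$, which is a sound and welcome elaboration rather than a different method.
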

\begin{proof}
For (i) we proceed by induction on $k$. If $k=1$, then $T_aT_b=q+pT_a+pT_b=p^{1-1}q+p^1T_a+pT_b^1$ (and the sum is empty). Now assume the equality holds for some $1\leq k<m$, then we have $T_aT_b^{k+1}=(T_aT_b^{k})T_b=(p^{k-1}q+p^kT_a+\sum\limits_{i=1}^{k-1} (p^2+q)p^{k-i-1}T_b^i+pT_b^k)T_b=p^{k-1}qT_b+p^kT_aT_b+\sum\limits_{i=1}^{k-1} (p^2+q)p^{k-i-1}T_b^{i+1}+pT_b^{k+1}$. We have $p^kT_aT_b=p^k(pT_a+pT_b+q)=p^{k+1}T_a+p^{k+1}T_b+p^{k}q$ and we can rewrite $\sum\limits_{i=1}^{k-1} (p^2+q)p^{k-i-1}T_b^{i+1}=\sum\limits_{i=2}^{k} (p^2+q)p^{(k+1)-i-1}T_b^i$. Thus, rearranging the terms, we obtain $T_aT_b^{k+1}=p^{k}q+p^{k+1}T_a+p^{k-1}qT_b+p^{k+1}T_b+\sum\limits_{i=2}^{k} (p^2+q)p^{(k+1)-1-i}T_b^i+pT_b^{k+1}=p^{k}q+p^{k+1}T_a+\sum\limits_{i=1}^{k} (p^2+q)p^{(k+1)-i-1}T_b^i+pT_b^{k+1}$.
A totally symmetric argument holds for $T_bT_a^k$.

For (ii), we can use that $T_a^{n+1}=T_aT_a^n=T_aT_b^m$ (resp. $T_b^{m+1}=T_bT_b^m=T_bT_a^n$) and the apply the relations of (1) to reduce terms of high enough exponents. Thus, with the relations of (1), any product of generators can be reduced to linear combinations of the family $(T_g)_{g\in\overline{\mathcal T_{n,m}}}$.
\end{proof}
\begin{thm}
The family $(T_g)_{g\in\overline{\mathcal T_{n,m}}}$ is a basis of $\mathcal H_{n,m}(p,q)$. In particular $\mathcal H_{n,m}(p,q)$ has dimension $n+m$.
\end{thm}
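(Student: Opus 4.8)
The plan is to exhibit $\mathcal H_{n,m}(p,q)$ acting on an explicit free $R$-module of rank $n+m$ in such a way that the images of the elements $T_g$, $g\in\overline{\mathcal T_{n,m}}$, become visibly $R$-linearly independent. Together with part (ii) of the preceding Lemma, which says these elements generate $\mathcal H_{n,m}(p,q)$, and with Proposition \ref{prop:tor_germ}, which gives $\#\overline{\mathcal T_{n,m}}=n+m$, this forces $(T_g)_{g\in\overline{\mathcal T_{n,m}}}$ to be a basis and $\mathcal H_{n,m}(p,q)$ to be free of rank $n+m$.

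Concretely, let $V=\bigoplus_{g\in\overline{\mathcal T_{n,m}}}Re_g$ be the free $R$-module with basis indexed by $\overline{\mathcal T_{n,m}}=\{1,\overline a,\dots,\overline a^n=\overline b^m,\overline b^{m-1},\dots,\overline b\}$. Define $R$-linear endomorphisms $\rho_a$ and $\rho_b$ of $V$ by copying the multiplication rules of the preceding Lemma: put $\rho_a(e_{\overline a^i})=e_{\overline a^{i+1}}$ for $0\le i\le n-1$ and $\rho_b(e_{\overline b^j})=e_{\overline b^{j+1}}$ for $0\le j\le m-1$ (the length-additive products), and on the remaining basis vectors set $\rho_a(e_{\overline b^j})$, $\rho_a(e_{\overline a^n})$, $\rho_b(e_{\overline a^i})$ and $\rho_b(e_{\overline a^n})$ equal to the right-hand sides of the formulas in part (i) of that Lemma, taken respectively with $k=j$, $k=m$, $k=i$, $k=n$ (using $e_{\overline b^m}=e_{\overline a^n}$, so every index appearing lies in $\overline{\mathcal T_{n,m}}$).

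The crux is to check that $\rho_a,\rho_b$ satisfy the three defining relations of $\mathcal H_{n,m}(p,q)$, namely $\rho_a^n=\rho_b^m$, $\rho_a\rho_b=p(\rho_a+\rho_b)+q\,\mathrm{id}_V$ and $\rho_b\rho_a=p(\rho_a+\rho_b)+q\,\mathrm{id}_V$; granting these, $T_a\mapsto\rho_a$, $T_b\mapsto\rho_b$ extends to an $R$-algebra homomorphism $\rho\colon\mathcal H_{n,m}(p,q)\to\operatorname{End}_R(V)$. Each relation is verified by evaluating both sides on every basis vector $e_g$ and unwinding the explicit recursions of part (i). I expect this bookkeeping — especially the commutation $\rho_a\rho_b=\rho_b\rho_a$ on the vectors $e_{\overline a^i}$ and $e_{\overline b^j}$ with large exponent, and the identity $\rho_a^n=\rho_b^m$ — to be the only genuine work and the main obstacle of the proof; the length-additive cases are immediate, and the rest reduce to identities already recorded in the Lemma.

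With $\rho$ in hand the conclusion is short. Iterating the length-additive rules gives $\rho_a^i(e_1)=e_{\overline a^i}$ for $0\le i\le n$ and $\rho_b^j(e_1)=e_{\overline b^j}$ for $0\le j\le m$, and since $a^i$ (resp. $b^j$) is a reduced expression of $\overline a^i$ (resp. $\overline b^j$), this says $\rho(T_g)(e_1)=e_g$ for every $g\in\overline{\mathcal T_{n,m}}$ (with $T_{\overline a^n}=T_a^n=T_b^m$, matching $e_{\overline a^n}=e_{\overline b^m}$). Hence any relation $\sum_{g\in\overline{\mathcal T_{n,m}}}c_gT_g=0$ in $\mathcal H_{n,m}(p,q)$ yields, after applying $\rho$ and evaluating at $e_1$, the relation $\sum_{g}c_ge_g=0$ in $V$, whence all $c_g=0$. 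So $(T_g)_{g\in\overline{\mathcal T_{n,m}}}$ is free over $R$; being also a generating set of cardinality $n+m=\#\overline{\mathcal T_{n,m}}$, it is a basis, and $\dim_R\mathcal H_{n,m}(p,q)=n+m$. (An alternative route would be to verify, via Bergman's Diamond Lemma, the confluence of the rewriting system $T_aT_b\to q+pT_a+pT_b$, $T_bT_a\to q+pT_a+pT_b$ together with the two length-reducing rules for $T_a^{n+1}$ and $T_b^{m+1}$ coming from part (i); but resolving the overlap ambiguities of words such as $T_a^{n+1}T_b$ and $T_aT_b^{m+1}$ amounts to the same computation as checking the relations above.)
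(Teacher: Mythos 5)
Your strategy is exactly the paper's: it too takes the free module $E=\bigoplus_{g\in\overline{\mathcal T_{n,m}}}Re_g$, defines the action of $T_a,T_b$ on the basis by the very same formulas (the length-additive rules $T_ae_{a^k}=e_{a^{k+1}}$, $T_be_{b^k}=e_{b^{k+1}}$ together with the expansions from part (i) of the preceding Lemma, with $T_ae_{a^n}=T_ae_{b^m}$ given by the $k=m$ formula), and obtains linear independence by applying a vanishing combination to $e_1$, exactly as you do. The only respect in which your write-up falls short of a complete proof is the step you yourself single out as the crux: you assert that $\rho_a\rho_b=\rho_b\rho_a=p(\rho_a+\rho_b)+q$ and $\rho_a^n=\rho_b^m$ can be checked on each basis vector, but you do not carry the check out, and this verification is where essentially all the content of the paper's proof lies. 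The length-additive cases and the mixed cases with small exponent are indeed immediate rearrangements of the recursion, but the boundary case --- the action of $T_aT_b$ on $e_{b^m}=e_{a^n}$ --- needs both mixed formulas expanded and the terms matched with some care (in the paper this is the comparison of the two displayed expressions, where $pT_ae_{a^n}$ cancels against $pT_ae_{b^m}$ and the extra $(p^2+q)e_{a^n}$ is absorbed as the top term of the re-indexed sum); once that is done, compatibility with $T_a^n=T_b^m$ follows formally from the already-verified commutativity, as in the paper. So the construction, the definitions, and the concluding independence argument are all correct and identical to the paper's; what is missing is the actual verification that the explicitly defined operators satisfy the defining relations, which you leave as an expectation --- the paper's computations confirm that this bookkeeping does go through.
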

The proof will follow a common strategy for Hecke algebra of finite Coxeter groups, see \cite[Theorem 4.4.6]{geck}.
\begin{proof}
Consider $E$ the free $R$-module with basis $(e_g)_{g\in\overline{\mathcal T_{n,m}}}$. We are going to show that we have an action of $\mathcal H_{n,m}(p,q)$ over $E$ induced by $T_ge_1=e_g$ and this will be enough. Indeed, assuming we have a linear combination $\sum_{g\in\overline{\mathcal T_{n,m}}} r_gT_g=0$ then $0=(\sum_{g\in\overline{\mathcal T_{n,m}}} r_gT_g)e_1=\sum_{g\in\overline{\mathcal T_{n,m}}} r_ge_g$ and since $E$ is free over $(e_g)$ we deduce that $r_g=0$ for all $g$.

We define the following action of $\overline{\mathcal T_{n,m}}$ on $E$, and show that it induces an action of $\mathcal H_{n,m}(p,q)$ on $E$: 
\begin{itemize}
\item $T_ae_{a^k}=e_{a^{k+1}}$, for $0\leq k\leq n-1$
\item $T_ae_{b^k}=p^{k-1}qe_1+p^ke_a+\sum\limits_{i=1}^{k-1} (p^2+q)p^{k-i-1}e_{b^i}+pe_{b^k}$, for $1\leq k\leq m$
\item $T_be_{b^k}=e_{b^{k+1}}$, for $0\leq k\leq m-1$
\item $T_be_{a^k}=p^{k-1}qe_1+p^ke_b+\sum\limits_{i=1}^{k-1} (p^2+q)p^{k-i-1}e_{a^i}+pe_{a^k}$, for $1\leq k\leq n$
\end{itemize}
In particular, $T_ae_{a^n}=T_ae_{b^m}=p^{m-1}qe_1+p^me_a+\sum\limits_{i=1}^{m-1} (p^2+q)p^{m-i-1}e_{b^i}+pe_{b^m}$.


We will to show that this action respect the defining relations of $\mathcal H_{n,m}(p,q)$.

To verify that the action is compatible with the relation $T_aT_b=p(T_a+T_b)+q$, we only need to consider the cases of $T_aT_be_{b^k}$ and $T_aT_be_{a^k}$, as the cases of $T_bT_ae_{b^k}$ and $T_bT_ae_{a^k}$ are obtained by symmetry.
First assume that $k<m$, then, on one hand, $T_aT_be_{b^k}=T_ae_{b^{k+1}}=p^{k}qe_1+p^{k+1}e_a+\sum\limits_{i=1}^{k} (p^2+q)p^{k-i}e_{b^i}+pe_{b^{k+1}}$. On the hand, $(pT_a+pT_b+q)e_{b^k}=pT_ae_{b^k}+qe_{b^k}+pe_{b^{k+1}}=p^{k}qe_1+p^{k+1}e_a+\sum\limits_{i=1}^{k-1} (p^2+q)p^{k-i}e_{b^i}+p^2e_{b^k}+qe_{b^k}+pe_{b^{k+1}}$ and those are easily seen to be equal by just noticing $p^2e_{b^k}+qe_{b^k}=(p^2+q)p^{k-k}e_{b^k}$. 

Then, for $k<n$, we have $T_aT_be_{a^k}=T_a(p^{k-1}qe_1+p^ke_b+\sum\limits_{i=1}^{k-1} (p^2+q)p^{k-i-1}e_{a^i}+pe_{a^k})=p^{k-1}qe_a+p^kT_ae_b+\sum\limits_{i=1}^{k-1} (p^2+q)p^{k-i-1}e_{a^{i+1}}+pe_{a^{k+1}}$ and a bit of rearranging the terms (and changing indices of sum) show that this is equal to $T_be_{a^{k+1}}=T_bT_ae_{a^k}$ which, again by symmetry, finishes the case $k<n$. 

Now for $k=m$ we have $T_aT_be_{b^m}=T_aT_be_{a^n}=T_a(p^{n-1}qe_1+p^ne_b+\sum\limits_{i=1}^{n-1} (p^2+q)p^{n-i-1}e_{a^i}+pe_{a^n})$, so 
\begin{equation}\label{eq:abbm}
T_aT_be_{b^m}=p^{n-1}qe_a+p^nT_ae_b+\sum\limits_{i=1}^{n-1} (p^2+q)p^{n-i-1}e_{a^{i+1}}+pT_ae_{a^n}.
\end{equation}
On the other hand, 
\begin{equation}\label{eq:abbm2}
(pT_a+pT_b+q)e_{b^m}=pT_ae_{b^m}+pT_be_{b^m}+qe_{b^m}. 
\end{equation}
The last term of Equation (\ref{eq:abbm}) and the first term of Equation (\ref{eq:abbm2}) match, as $a^n=b^m$. So we have to show $$p^{n-1}qe_a+p^nT_ae_b+\sum\limits_{i=1}^{n-1} (p^2+q)p^{n-i-1}e_{a^{i+1}}=pT_be_{b^m}+qe_{b^n}.$$  On the left we expand $T_ae_b$ and on the right we expand $T_be_{b^n}=T_be_{a^n}$, where we respectively obtain 
$$p^nqe_1+p^{n+1}e_b+\sum\limits_{i=1}^{n} (p^2+q)p^{(n+1)-i-1}e_{a^i}$$
and 
$$p^{n}qe_1+p^{n+1}e_b+\sum\limits_{i=1}^{n-1} (p^2+q)p^{(n+1)-i-1}e_{a^i}+p^2e_{a^n}+qe_{a^n}$$
which also match as $(p^2+q)e_{a^n}=(p^2+q)p^{(n+1)-n-1}e_a^{n}$.

For $T_bT_ae_{b^m}$ the computation is totally similar.

Then we can easily deduce that the relation $T_a^n=T_b^m$ is compatible with the action: $$T_a^ne_{a^k}=T_a^ke_{a^n}=T_a^ke_{b^m}=T_a^kT_b^me_1=T_b^mT_a^ke_1=T_b^me_{a^k}$$ 
The first equality is obtained by $T_ae_{a^k}=e_{a^{k+1}}$ for $k<n$. The second one by $a^n=b^m$. The third equality is obtained by $T_be_{b^k}=e_{b^{k+1}}$ for $k<m$. The fourth one follows from the fact that we've shown that $T_aT_b=T_bT_a$ is respected by the action. 

Similarly, we have $$T_a^ne_{b^k}=T_a^nT_b^ke_1=T_b^kT_a^ne_1=T_b^ke_{a^n}=T_b^ke_{b^m}=T_b^me_{b^k}.$$
Showing that the action of $\mathcal H_{n,m}(p,q)$ on $E$ is well-defined, and thus finishing the proof.
\end{proof}
We finish this section by relating this result with a well-known theory for Complex reflection groups (CRG), following \cite{CRGHecke}:\index{Complex reflection group}
\begin{defi} Let $V$ be a complex vector space of finite dimension $r$.

A pseudo-reflection is a non-trivial element of $\text{GL}(V)$ that fixes an hyperplane in $V$.

A complex reflection group of rank $r$ is a finite subgroup of $\text{GL}(V)$ generated by pseudo-reflections. Moreover, a complex reflection group is called irreducible if it does not stabilize any proper subspace of $V$.
\end{defi}
The classification of all irreducible complex reflection groups was obtained by Shephard and Todd in \cite{shephardCRG}, involving an infinite family $G(de,e,r)$ with $d,e,r$ positive integers, and 34 exceptional cases $G_4,G_5,\dots,G_{37}$. Moreover, the family of complex reflection groups whose elements are real matrices correspond to finite Coxeter groups. Thus, they are often seen as a natural generalization of finite Coxeter groups.

In \cite{CRGHecke}, the authors give a topological definition of the Hecke algebra of a CRG. The authors then show that for the infinite family $G(de,e,r)$, the Hecke algebra admits a presentation with generators $T_s$ associated to the pseudo-reflections generating the CRG, and relations of two types: "braid-like" relations, and relations of the form $(T_s-u_{s,0})(T_s-u_{s,1})\cdots(T_s-u_{s,e_s})$ for some integer $e_s$.

As in the section we focused on the Garside group $\mathcal T_{n,m}$ of rank 2 with germ $\overline{\mathcal T_{n,m}}\cong \Z/(n+m)\Z$, we provide the statement of \cite{CRGHecke} for the case $G(k,1,1)\cong \Z/k\Z$:
\begin{thm}[{\cite[Propositions 4.22-4.24]{CRGHecke}}]
For the Hecke algebra of $C_k\coloneqq\Z/k\Z$ we have $$\mathcal H(C_k)\cong\Z[u_1,\dots,u_k]\left\langle T\mid (T-u_1)(T-u_2)\cdots(T-u_{k})=0\right\rangle.$$

The specialization of $u_j$ at $\exp\left(j\frac{2i\pi}{k}\right)$ induces a morphism $\mathcal H(C_k)\to \C\otimes\Z[C_k]$.

Moreover, $\mathcal H(C_k)$ is free of rank $k$, with basis $\{1,T,T^2,\dots,T^{k-1}\}$.
\end{thm}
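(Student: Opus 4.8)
The plan is to unwind the topological definition of \cite{CRGHecke} for the rank-one group $C_k=G(k,1,1)$, where it degenerates to an entirely elementary statement about polynomial rings. I would first realize $C_k$ as the group generated by $\zeta=\exp(2i\pi/k)$ acting on $V=\C$ by multiplication: the only reflecting hyperplane is $\{0\}$, there is a single orbit of hyperplanes, and the pointwise stabilizer of $\{0\}$ is all of $C_k$, so the associated integer is $e=k$. The regular part is $V^{\mathrm{reg}}=\C^{\ast}$ and $V^{\mathrm{reg}}/C_k\cong\C^{\ast}$ via $z\mapsto z^k$, so the braid group is $B(C_k)=\pi_1(\C^{\ast})\cong\Z$, infinite cyclic, generated by a braid reflection $\mathbf s$ mapping onto a generator of $C_k$ under $B(C_k)\twoheadrightarrow C_k$. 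By the definition in \cite{CRGHecke}, $\mathcal H(C_k)$ is then the quotient of the group algebra $\Z[u_1,\dots,u_k][B(C_k)]$ by the single relation $(\mathbf s-u_1)\cdots(\mathbf s-u_k)=0$.

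Writing $R=\Z[u_1,\dots,u_k]$ and $T$ for the image of $\mathbf s$, I would next observe that since $B(C_k)$ is the free group on $\mathbf s$, its group algebra is a Laurent polynomial ring, and the relation has constant coefficient $(-1)^ku_1\cdots u_k$, a unit once the $u_i$ are inverted (alternatively one simply works in $R[\mathbf s]$, as in the statement); in either case $\mathbf s$ is invertible modulo the relation, giving the displayed presentation $\mathcal H(C_k)\cong R[T]/\big((T-u_1)\cdots(T-u_k)\big)$. For the basis claim, the key point is that $f=(T-u_1)\cdots(T-u_k)$ is \emph{monic} of degree $k$, so Euclidean division by $f$ is valid over the commutative ring $R$: every $g\in R[T]$ is uniquely $g=qf+r$ with $\deg r<k$, and monicity gives $\deg(qf)=\deg q+k$, so $qf$ of degree $<k$ forces $q=0$. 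This yields an $R$-module decomposition $R[T]=\big(\bigoplus_{j=0}^{k-1}RT^{j}\big)\oplus fR[T]$, whence $\mathcal H(C_k)\cong\bigoplus_{j=0}^{k-1}RT^{j}$ is free of rank $k$ with basis $\{1,T,\dots,T^{k-1}\}$.

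For the specialization I would use that the ring morphism $R\to\C$ sending $u_j\mapsto\exp(j\tfrac{2i\pi}{k})=\zeta^{j}$ carries $f$ to $\prod_{j=1}^{k}(T-\zeta^{j})=T^{k}-1$, since $\{\zeta^{j}\mid 1\le j\le k\}$ is exactly the set of $k$-th roots of unity. It therefore descends to an algebra morphism $\mathcal H(C_k)\to\C[T]/(T^{k}-1)$, and $\C[T]/(T^{k}-1)$ is canonically identified with $\C\otimes_{\Z}\big(\Z[T]/(T^{k}-1)\big)\cong\C\otimes_{\Z}\Z[C_k]$, a generator of $C_k$ corresponding to $T$; this is the asserted morphism.

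The only point requiring genuine care is bookkeeping with the definition of \cite{CRGHecke}: one must correctly identify, for $G(k,1,1)$, that the discriminant complement in rank one is just $\C^{\ast}$ — so the braid group is $\Z$ rather than anything larger, and there is exactly one hyperplane orbit contributing one degree-$k$ relation — after which freeness and the specialization are immediate, the only subtlety being that division-with-remainder is legitimate over the non-field ring $R$ precisely because the divisor $f$ is monic. I do not expect any analytic or combinatorial obstacle beyond this.
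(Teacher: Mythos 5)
The paper does not prove this statement at all: it is quoted verbatim (as a rank-one special case) from \cite[Propositions 4.22--4.24]{CRGHecke}, so there is no internal proof to compare against. Your reconstruction from the topological definition is correct and is essentially the argument of the cited source: for $G(k,1,1)$ acting on $V=\C$ the only reflecting hyperplane is $\{0\}$ with pointwise stabilizer of order $e=k$, $V^{\mathrm{reg}}/C_k\cong\C^{\ast}$, so the braid group is $\Z$ generated by one braid reflection, and the Hecke algebra is the quotient of its group algebra by the single degree-$k$ relation; freeness with basis $1,T,\dots,T^{k-1}$ then follows from division with remainder by the monic polynomial $(T-u_1)\cdots(T-u_k)$, and the specialization $u_j\mapsto\zeta^j$ sends that polynomial to $T^k-1$, giving the map to $\C\otimes\Z[C_k]$. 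One point to tighten: in \cite{CRGHecke} the Hecke algebra is defined over the Laurent ring $\Z[u_1^{\pm1},\dots,u_k^{\pm1}]$, and your identification of $R[\mathbf{s}^{\pm1}]/(f)$ with $R[T]/(f)$ genuinely uses that $u_1\cdots u_k$ is invertible, since otherwise $\mathbf{s}^{-1}$ need not lie in the $R$-span of $1,\mathbf{s},\dots,\mathbf{s}^{k-1}$; so your parenthetical claim that ``in either case $\mathbf{s}$ is invertible modulo the relation'' fails over the plain polynomial ring $\Z[u_1,\dots,u_k]$. This is a looseness inherited from the way the theorem is transcribed in the paper (which drops the inverses of the parameters), not a gap in your main argument, but the proof should be phrased over the Laurent coefficients, where everything you wrote goes through verbatim.
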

\begin{rmk}
It was remarked by Loïc Poulain-d'Andecy (\cite{lpda}) that, if $R=\Z$, then $\mathcal H_{n,m}(p,q)$ is a specialization of $\mathcal H(C_{n+m})$. Indeed, by Proposition \ref{prop:tor_germ} we have a bijection between their respective basis given by $T\mapsto T_a$ and $T^{n+m-1}\mapsto T_b$. The relation $T_aT_b=p(T_a+T_b)+q$ can then be rewritten as $T^{n+m}=pT^{n+m-1}+pT+q$. Taking a specialization of $(u_1,\dots,u_k)$ at the complex roots of $X^{n+m}-pX^{n+m-1}-pX-q\in \Z[X]$ then induces a specialization $\mathcal H(C_{n+m})\to\mathcal H_{n,m}(p,q)$.
\end{rmk}
\appendix
\section{Finding the correct definition via a diagrammtic approach}\label{sec:hecke_intro}
The first attempts to adapt the definition from Artin--Tits groups to Yang--Baxter structure groups would be to quotient $R[G]$ by something of the form $T_{s^{[d]}}=a_{d-1}T_{s^{[d]}}+\dots+a_1 T_s+a_0$. However, apart from a specific case mentioned in the following sections (the unique non-trivial solution of size 2), this does not really work. Using the GAP package \textit{GBNP} to compute a non-commutative Gröbner Basis, shows that such quotient won't have the correct dimension (it collapses, almost always identifying all generators). 
For instance, the GAP code in Program \ref{gap:dim} checks, for a chosen cycle set of both size and class 3, that no intuitive definition works.
\renewcommand{\thelstlisting}{\arabic{lstlisting}}
\renewcommand{\lstlistingname}{Program}
\lstset{basicstyle={\ttfamily\small}}
\begin{lstlisting}[caption=Checking dimensions of quotient algebras,captionpos=b,frame=tlrb,label=gap:dim]
#Setup
LoadPackage("GBNP");
A:=FreeAssociativeAlgebraWithOne(Integers,"a","b","c");
gens:=GeneratorsOfAlgebra(A);
e:=gens[1];a:=gens[2];b:=gens[3];c:=gens[4];
q:=100;
#Construct all subsets of elements of length < 3
words:=[e,a,b,c,a*a,a*b,b*b,b*c,c*a,c*c];
comb:=Combinations(words);
Remove(comb,1);
sComb:=String(comb);
sComb:=ReplacedString(ReplacedString(
	sComb,"(1)*",""),"<identity ...>","e");
sCombx:=ReplacedString(ReplacedString(
	ReplacedString(sComb,"a","x"),"b","y"),"c","z");
sCombB:=ReplacedString(ReplacedString(
	ReplacedString(sCombx,"x","b"),"y","c"),"z","a");
sCombC:=ReplacedString(ReplacedString(
	ReplacedString(sCombx,"x","c"),"y","a"),"z","b");
combA:=EvalString(sComb);
combB:=EvalString(sCombB);
combC:=EvalString(sCombC);
l:=Length(combA);
#Compute dimensions of each quotient algebras
for i in [1..l] do
Print("\r             ");
Print(i,"/",l);
x:=combA[i];y:=combB[i];z:=combC[i];
rels:=[a*c-b*b,b*a-c*c,c*b-a*a,
	a*b*c-(q-1)*Sum(x)-q*e,b*c*a-(q-1)*Sum(y)-q*e,
	c*a*b-(q-1)*Sum(z)-q*e];
KI:=GP2NPList(rels);
GB:=SGrobner(KI);
if DimQA(GB,0)=27 then
Print("\n");
Print(Sum(x));
Print("\n");
Print(Sum(y));
Print("\n");
Print(Sum(z));
Print("\n");
PrintNPList(GB);
Print("\n");
fi;
od;
\end{lstlisting}
To do this verification for $S=\{s,t,u\},\psi(s)=\psi(t)=\psi(u)=(stu)=\sigma$, we consider all relations of the form $$T_{s^{[d]}}=2T_1+\sum\limits_{\substack{g\in\overline G\\ 1\leq\ell(g)\leq 2}} a_{s,g}T_{g},\quad a_{s,g}\in\{0,1\}\subset\mathbb Q$$
and $T_{t^{[d]}}=\sigma(T_{s^{[d]}})$,$T_{u^{[d]}}=\sigma^2(T_{s^{[d]}})$ to retain the symmetry. Note that we chose a particular specialization of the coefficients $a_i$, as we expect the definition of the Hecke algebra to work for all specializations.
We then use the \textit{GBNP} package functions to compute the size of the quotient algebra (deduced from a non-commutative Gröbner basis). We are interested in quotient algebras which are free of rank $\#\overline G=3^3=27$, so that we can have $(T_g)_{g\in\overline G}$ as a basis. The only relation for which this happen is $T_{s^{[d]}}=2T_1$, i.e. a non-interesting deformation of the group ring $\mathbb Z[\overline G]$. It is also worth to note that, in most cases, the quotient is small to the point that the generators $(T_s)_{s\in S}$ are identified.

This was tested for many small solutions, in particular the cyclic solutions such that $\psi(S)=\sigma\in\Sym_n$, leading to the alternative approach of Section \ref{ss:2gen}. Thus the approach had to be changed, and we are going to give a brief idea on how the current one was obtained. The following approach was inspired by a talk given by L. Poulain d'Andecy in Caen \cite{pda_caen}.

For the Braids groups $B_n$, whose Coxeter groups are $\Sym_n$ (of type $A_{n-1}$), the generic Iwahori--Hecke algebra can be defined by the diagrammatic relations as follows:
\begin{figure}[H]
$$\vcenter{\hbox{\begin{tikzpicture}
\pic[name=b] {braid={s_1 s_1}};
\end{tikzpicture}}}
=
(q-1)\vcenter{\hbox{\begin{tikzpicture}
\pic[name=b] {braid={s_1}};
\end{tikzpicture}}}
+
q\:\vcenter{\hbox{\begin{tikzpicture}
\pic[braid/number of strands=2] {braid={1}};
\end{tikzpicture}}}
$$
\end{figure}

which can also be written as   
\begin{figure}[H]
$$\vcenter{\hbox{\begin{tikzpicture}
\pic[name=b] {braid={s_1}};
\end{tikzpicture}}}
-
q\:\vcenter{\hbox{\begin{tikzpicture}
\pic[name=b] {braid={s_1^{-1}}};
\end{tikzpicture}}}
=
(q-1)\:\vcenter{\hbox{\begin{tikzpicture}
\pic[braid/number of strands=2] {braid={1}};
\end{tikzpicture}}}.
$$
\end{figure}

Intuitively, this means that we are "mostly" interested in the permutation associated to the braid, which is related to the fact that the Coxeter group is $\Sym_n$. In what follows, we will explain the diagrammatical construction which gives the intuition of a "good" definition of Hecke algebra. 

\begin{defi} Let $n$ be a positive integer. Consider the $2n$ points in $\R^2$ with coordinates $(1,0),\dots,(n,0)$, $(1,1),\dots(n,1)$. A family of $n$ curves $(C_i\colon [0,1]\to \R^2)_{1\leq i\leq n}$ is called a $n$-strand permutation diagram if there exists a permutation $\sigma\in\Sym_n$ such that $C_i(0)=(i,1)$ and $C_i(1)=(\sigma(i),0)$. 

In this case, $C_i$ is called the $i$-th strand. 

The inverse of $\sigma$ will be called the permutation associated to the diagram. Equivalently, the associated permutation can be read as the permutation obtained looking at the diagram from bottom to top. 

Two such diagrams are said to be equivalent if they define the same permutation.
\end{defi}
\begin{ex} The following is a $4$-strand permutation diagram with associated permutation $\begin{pmatrix}1&2&3&4\\2&3&4&1\end{pmatrix}=(1234)$:
\begin{center}
\begin{tikzpicture}
\node (1s) at (1,2) {1};
\node (2s) at (2,2) {2};
\node (3s) at (3,2) {3};
\node (4s) at (4,2) {4};
\node (1e) at (1,0) {1};
\node (2e) at (2,0) {2};
\node (3e) at (3,0) {3};
\node (4e) at (4,0) {4};
\draw (1s) -- (2e);
\draw (2s) -- (3e);
\draw (3s) -- (4e);
\draw (4s) -- (1e);
\end{tikzpicture}
\end{center}
\end{ex}
If we have two $n$-strand permutation diagrams, we can stack one on top of the other to obtain a new one (after rescaling vertically). This is illustrated in this example:
$$
\vcenter{\hbox{\begin{tikzpicture}
\node (1s) at (1,2) {1};
\node (2s) at (2,2) {2};
\node (3s) at (3,2) {3};
\node (4s) at (4,2) {4};
\node (1e) at (1,0) {1};
\node (2e) at (2,0) {2};
\node (3e) at (3,0) {3};
\node (4e) at (4,0) {4};
\draw (1s) -- (2e);
\draw (2s) -- (3e);
\draw (3s) -- (4e);
\draw (4s) -- (1e);
\end{tikzpicture}}}
\circ
\vcenter{\hbox{\begin{tikzpicture}
\node (1s) at (1,2) {1};
\node (2s) at (2,2) {2};
\node (3s) at (3,2) {3};
\node (4s) at (4,2) {4};
\node (1e) at (1,0) {1};
\node (2e) at (2,0) {2};
\node (3e) at (3,0) {3};
\node (4e) at (4,0) {4};
\draw (1s) -- (2e);
\draw (2s) -- (1e);
\draw (3s) -- (4e);
\draw (4s) -- (3e);
\end{tikzpicture}}}
=
\vcenter{\hbox{\begin{tikzpicture}
\node (1s) at (1,2) {1};
\node (2s) at (2,2) {2};
\node (3s) at (3,2) {3};
\node (4s) at (4,2) {4};
\node (1m) at (1,0) {};
\node (2m) at (2,0) {};
\node (3m) at (3,0) {};
\node (4m) at (4,0) {};
\node (1e) at (1,-2) {1};
\node (2e) at (2,-2) {2};
\node (3e) at (3,-2) {3};
\node (4e) at (4,-2) {4};
\draw (1s) -- (2m.center);
\draw (2s) -- (3m.center);
\draw (3s) -- (4m.center);
\draw (4s) -- (1m.center);
\draw (1m.center) -- (2e);
\draw (2m.center) -- (1e);
\draw (3m.center) -- (4e);
\draw (4m.center) -- (3e);
\end{tikzpicture}}}
\sim
\vcenter{\hbox{\begin{tikzpicture}
\node (1s) at (1,2) {1};
\node (2s) at (2,2) {2};
\node (3s) at (3,2) {3};
\node (4s) at (4,2) {4};
\node (1e) at (1,0) {1};
\node (2e) at (2,0) {2};
\node (3e) at (3,0) {3};
\node (4e) at (4,0) {4};
\draw (1s) -- (1e);
\draw (2s) -- (4e);
\draw (3s) -- (3e);
\draw (4s) -- (2e);
\end{tikzpicture}}}
$$
The associated permutation of the first (resp. second) diagram in the product is given by $(1234)^{-1}=(4321)$ (resp. $\left((12)(34)\right)^{-1}=(12)(34)$). And the permutation of their stacking is $(24)^{-1}=(24)$, which is also equal to $(4321)\circ(12)(34)$. The fact that the permutation of the stacking is the product of the permutation holds in general, as indicated by the following:
\begin{prop}\label{prop:diag_sym} There is an isomorphism between the group of $n$-strand permutation diagrams up to equivalence and $\Sym_n$.
\end{prop}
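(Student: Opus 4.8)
The plan is to obtain the isomorphism from the map $\widetilde\Phi$ that assigns to an $n$-strand permutation diagram its associated permutation, and to check that $\widetilde\Phi$ descends to a bijection on equivalence classes which intertwines stacking with composition in $\Sym_n$. First I would dispose of the formal parts. By definition two diagrams are equivalent exactly when they have the same associated permutation, so $\widetilde\Phi$ is constant on equivalence classes and the induced map $\Phi$ on classes is injective. For surjectivity, given $\tau\in\Sym_n$ I would take the diagram made of the straight segments $C_i$ from $(i,1)$ to $(\tau^{-1}(i),0)$ --- these are admissible strands, since the definition imposes no embeddedness or transversality condition --- and its associated permutation is $(\tau^{-1})^{-1}=\tau$. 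So $\Phi$ is a bijection from equivalence classes of diagrams onto $\Sym_n$.

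The single computational step is to track the permutation of a stacking. If $C$ and $D$ are diagrams with $C_i(1)=(\sigma_C(i),0)$ and $D_i(1)=(\sigma_D(i),0)$, then in $C\circ D$ (a vertical rescaling of $C$ placed above $D$) the strand issuing from top position $i$ reaches the middle level at position $\sigma_C(i)$ and then, continuing along $D$, reaches the bottom at position $\sigma_D(\sigma_C(i))$; thus $C\circ D$ realizes the permutation $\sigma_D\circ\sigma_C$. Passing to associated permutations by taking inverses gives $\widetilde\Phi(C\circ D)=\sigma_C^{-1}\circ\sigma_D^{-1}=\widetilde\Phi(C)\circ\widetilde\Phi(D)$, the order reversal being exactly the ``read bottom to top'' convention and matching the sample identity $(24)=(4321)\circ(12)(34)$ displayed above. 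In particular the class of $C\circ D$ depends only on the classes of $C$ and $D$, so stacking is a well-defined binary operation on equivalence classes.

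Finally I would assemble the pieces: $\Phi$ is a bijection satisfying $\Phi([C]\circ[D])=\Phi([C])\circ\Phi([D])$, so transporting the group structure of $\Sym_n$ along $\Phi$ shows at once that the equivalence classes of $n$-strand permutation diagrams form a group under stacking --- with unit the class of the diagram of straight vertical strands, and inverses obtained by reflecting a diagram top-to-bottom --- and that $\Phi$ is a group isomorphism onto $\Sym_n$. I do not anticipate a genuine obstacle here; the only delicate point is bookkeeping the conventions, namely that the associated permutation is $\sigma^{-1}$ read from the bottom upward, which is precisely what makes stacking correspond to composition (rather than to the opposite product) of the associated permutations.
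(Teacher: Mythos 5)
Your proposal is correct and follows essentially the same route as the paper: the key step in both is computing that the permutation realized by a stacking is the composite of the two permutations, after which the map to $\Sym_n$ is a bijection on equivalence classes compatible with stacking, hence an isomorphism. Your version is merely a bit more explicit about surjectivity (straight-line diagrams) and about the inverse/ordering conventions, which the paper leaves implicit.
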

\begin{proof}
Consider the stacking of two diagrams with associated permutations respectively $\sigma$ and $\tau$. The first diagram sends $i$ to $\sigma(i)$, and the second one sends $\sigma(i)$ to $\tau(\sigma(i)$. So we obtain that the permutation of the stacking is the product of the permutation. This implies that, when considering diagrams up to equivalence (defining the same permutation), the stacking operation is a group law: associativity is clear, the identity is the equivalence class of diagrams with trivial permutation, and inverses are given by the equivalence class of diagram with the inverse permutation. In other words, the map sending a diagram to its associated permutation is a morphism. 

Moreover, diagrams are considered up to the equivalence relation of defining the same permutation. Thus there is a unique equivalence class of diagrams with trivial permutation, and so this morphism is an isomorphism.
\end{proof}
\begin{defi} Let $\Gamma$ be a group. A $\Gamma$-marked permutation diagram is a permutation diagram where strands can be marked anywhere by elements of $\Gamma$. There can be multiple ordered elements for one strand. Moreover, a marking by $1\in \Gamma$ is considered equivalent to no marking.

Two markings of one strand are equivalent if they are identified by the group law as follows: 
$$\vcenter{\hbox{\begin{tikzpicture}
\node (s) at (1,2) {};
\node (e) at (2,0) {};
\node[circle,fill=black,minimum size=0.2cm,inner sep=0pt,label=right:{g}] at (1.25,1.5) {};
\node[circle,fill=black,minimum size=0.2cm,inner sep=0pt,label=right:{h}] at (1.75,0.5) {};
\draw (s) -- (e);
\end{tikzpicture}}}
\sim
\vcenter{\hbox{\begin{tikzpicture}
\node (s) at (1,2) {};
\node (e) at (2,0) {};
\node[circle,fill=black,minimum size=0.2cm,inner sep=0pt,label=right:{gh}] at (1.5,1) {};
\draw (s) -- (e);
\end{tikzpicture}}}$$
\end{defi}
\begin{ex} We will later focus on $\Z$ and $\Z/d\Z$ markings. As those groups are cyclic, we can simplify the markings: 

For $\Z$, associate to $+1$ the marking by $\bullet$ and to $-1$ the marking by $\circ$. A marking by a positive integer $n$ then corresponds to $n$ markings by $\bullet$, and similarly for negative integers with $\circ$. 

For $\Z/d\Z$, we will only consider markings by $\bullet$ which corresponds to the class of $+1$.

The following is a $\Z$-marked $3$-strand permutation diagram, where the strand 1 to 3 are respectively marked by 2, 0 and -3:
$$\vcenter{\hbox{\begin{tikzpicture}
\node (1s) at (1,2) {};
\node (2s) at (2,2) {};
\node (3s) at (3,2) {};
\node (1e) at (1,0) {};
\node (2e) at (2,0) {};
\node (3e) at (3,0) {};
\draw (1s) -- (2e);
\draw (2s) -- (3e);
\draw (3s) -- (1e);
\node[circle,fill=black,minimum size=0.2cm,inner sep=0pt] at (1.1,1.8) {};
\node[circle,fill=black,minimum size=0.2cm,inner sep=0pt] at (1.3,1.4) {};
\node[circle,draw=black,fill=white,minimum size=0.2cm,inner sep=0pt] at (2.8,1.8) {};
\node[circle,draw=black,fill=white,minimum size=0.2cm,inner sep=0pt] at (2.5,1.5) {};
\node[circle,draw=black,fill=white,minimum size=0.2cm,inner sep=0pt] at (2,1) {};
\end{tikzpicture}}}$$
\end{ex}
\begin{rmk}
We can always move all the markings to the top (or bottom) of a strand. This also applies when stacking two diagrams, as illustrated in the following for $\Z/3\Z$-marked $3$-strand permutation diagrams:
$$
\vcenter{\hbox{\begin{tikzpicture}
\node (1s) at (1,2) {};
\node (2s) at (2,2) {};
\node (3s) at (3,2) {};
\node (1e) at (1,0) {};
\node (2e) at (2,0) {};
\node (3e) at (3,0) {};
\draw (1s) -- (2e);
\draw (2s) -- (3e);
\draw (3s) -- (1e);
\node[circle,fill=black,minimum size=0.2cm,inner sep=0pt] at (2.8,1.8) {};
\node[circle,fill=black,minimum size=0.2cm,inner sep=0pt] at (1.1,1.8) {};
\end{tikzpicture}}}
\cdot
\vcenter{\hbox{\begin{tikzpicture}
\node (1s) at (1,2) {};
\node (2s) at (2,2) {};
\node (3s) at (3,2) {};
\node (1e) at (1,0) {};
\node (2e) at (2,0) {};
\node (3e) at (3,0) {};
\draw (1s) -- (2e);
\draw (2s) -- (1e);
\draw (3s) -- (3e);
\node[circle,fill=black,minimum size=0.2cm,inner sep=0pt] at (1.1,1.8) {};
\node[circle,fill=black,minimum size=0.2cm,inner sep=0pt] at (1.3,1.4) {};
\node[circle,fill=black,minimum size=0.2cm,inner sep=0pt] at (3,1.7) {};
\end{tikzpicture}}}
=
\vcenter{\hbox{\begin{tikzpicture}
\node (1s) at (1,2) {};
\node (2s) at (2,2) {};
\node (3s) at (3,2) {};
\node (1m) at (1,0) {};
\node (2m) at (2,0) {};
\node (3m) at (3,0) {};
\node (1e) at (1,-2) {};
\node (2e) at (2,-2) {};
\node (3e) at (3,-2) {};
\draw (1s) -- (2m.center);
\draw (2s) -- (3m.center);
\draw (3s) -- (1m.center);
\draw (1m.center) -- (2e);
\draw (2m.center) -- (1e);
\draw (3m.center) -- (3e);
\node[circle,fill=black,minimum size=0.2cm,inner sep=0pt] at (1.1,1.8) {};
\node[circle,fill=black,minimum size=0.2cm,inner sep=0pt] at (2.8,1.8) {};
\node[circle,fill=black,minimum size=0.2cm,inner sep=0pt] at (3,0) {};
\node[circle,fill=black,minimum size=0.2cm,inner sep=0pt] at (1,0) {};
\node[circle,fill=black,minimum size=0.2cm,inner sep=0pt] at (1.2,-0.4) {};
\end{tikzpicture}}}
\quad\sim\quad
\vcenter{\hbox{\begin{tikzpicture}
\node (1s) at (1,2) {};
\node (2s) at (2,2) {};
\node (3s) at (3,2) {};
\node (1e) at (1,0) {};
\node (2e) at (2,0) {};
\node (3e) at (3,0) {};
\draw (1s) -- (1e);
\draw (2s) -- (3e);
\draw (3s) -- (2e);
\node[circle,fill=black,minimum size=0.2cm,inner sep=0pt] at (1,1.8) {};
\node[circle,fill=black,minimum size=0.2cm,inner sep=0pt] at (2.1,1.8) {};
\node[circle,fill=black,minimum size=0.2cm,inner sep=0pt] at (3,2) {};
\node[circle,fill=black,minimum size=0.2cm,inner sep=0pt] at (2.8,1.6) {};
\node[circle,fill=black,minimum size=0.2cm,inner sep=0pt] at (2.6,1.2) {};
\end{tikzpicture}}}
\quad\sim\quad
\vcenter{\hbox{\begin{tikzpicture}
\node (1s) at (1,2) {};
\node (2s) at (2,2) {};
\node (3s) at (3,2) {};
\node (1e) at (1,0) {};
\node (2e) at (2,0) {};
\node (3e) at (3,0) {};
\draw (1s) -- (1e);
\draw (2s) -- (3e);
\draw (3s) -- (2e);
\node[circle,fill=black,minimum size=0.2cm,inner sep=0pt] at (1,1.8) {};
\node[circle,fill=black,minimum size=0.2cm,inner sep=0pt] at (2.1,1.8) {};
\end{tikzpicture}}}
$$
where the equality is the stacking operation, the first equivalence is the equivalence of permutation diagram, and the second equivalence is the fact that we have a $\Z/3\Z$-marking (so $\bullet\bullet\bullet=3\bullet=0$)
\end{rmk}

Consider the action of $\Sym_n$ on $G^n$ by permuting the entries, i.e. $\sigma$ sends the $i$-th entry to the $\sigma(i)$-th one, or, equivalently, $\sigma\cdot(g_1,\dots,g_n)=(g_{\sigma^{-1}(1)},\dots,g_{\sigma^{-1}(n)})$ .
\begin{prop}\label{prop:marked_sd} The group of $\Gamma$-marked $n$-strand permutation group is isomorphic to $\Gamma^n\rtimes \Sym_n$, where $\Sym_n$ acts by permuting the entries of $\Gamma^n$.
\end{prop}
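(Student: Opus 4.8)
The plan is to construct an explicit isomorphism $\Phi$ from the group $D_n(\Gamma)$ of $\Gamma$-marked $n$-strand permutation diagrams (up to equivalence) onto $\Gamma^n\rtimes\Sym_n$, and then check that it is a bijective homomorphism. First I would push all markings of a diagram $D$ to the top of their strands and combine them using the group law of $\Gamma$; by the defining equivalence of markings (including "a marking by $1$ is no marking"), this produces for each $k\in\{1,\dots,n\}$ a well-defined element $h_k(D)\in\Gamma$, namely the total marking carried by the strand whose top endpoint is at position $k$. Together with the associated permutation $\pi_D\in\Sym_n$, which is well defined by Proposition \ref{prop:diag_sym}, this gives a map $\Phi\colon D_n(\Gamma)\to\Gamma^n\rtimes\Sym_n$, $\Phi(D)=\bigl((h_1(D),\dots,h_n(D)),\,\pi_D\bigr)$.

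Next I would check that $\Phi$ is a bijection: given a pair $\bigl((v_1,\dots,v_n),\sigma\bigr)$, there is exactly one equivalence class of marked diagrams realizing the permutation $\sigma$ (by Proposition \ref{prop:diag_sym}) and carrying, on the strand ending at top position $k$, the single marking $v_k$; this furnishes a two-sided inverse to $\Phi$.

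The heart of the argument is that $\Phi$ is a homomorphism. Let $D_1$ sit on top of $D_2$ in the stacking $D_1\circ D_2$, and trace the strand of $D_1\circ D_2$ ending at top position $k$: it traverses $D_1$ (contributing the marking $h_k(D_1)$, nearer the top), then enters $D_2$ at the bottom position $\pi_{D_1}^{-1}(k)$, i.e. it is the strand of $D_2$ ending at top position $\pi_{D_1}^{-1}(k)$ (contributing $h_{\pi_{D_1}^{-1}(k)}(D_2)$, lower down). Combining these two markings by the group law of $\Gamma$ gives $h_k(D_1\circ D_2)=h_k(D_1)\,h_{\pi_{D_1}^{-1}(k)}(D_2)$, while $\pi_{D_1\circ D_2}=\pi_{D_1}\pi_{D_2}$ by Proposition \ref{prop:diag_sym}. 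Comparing with the product $\bigl((a),\sigma\bigr)\bigl((b),\tau\bigr)=\bigl((a_k\,b_{\sigma^{-1}(k)})_k,\ \sigma\tau\bigr)$ in $\Gamma^n\rtimes\Sym_n$ for the action $\sigma\cdot(b_1,\dots,b_n)=(b_{\sigma^{-1}(1)},\dots,b_{\sigma^{-1}(n)})$, we obtain $\Phi(D_1\circ D_2)=\Phi(D_1)\Phi(D_2)$; this simultaneously pins down that the relevant $\Sym_n$-action is the one permuting entries, which is the content of the statement.

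The only genuine subtlety — and the step I would treat most carefully — is the bookkeeping of index conventions: whether markings are pushed to the top or bottom of strands, whether the associated permutation is read bottom-to-top or top-to-bottom, and the order in which the two blocks of markings appear along a stacked strand (which matters because $\Gamma$ need not be abelian). A consistent choice as above makes $\Sym_n$ act by permuting coordinates; the opposite choices produce the isomorphic group with the inverse action, so no generality is lost, but the verification must be carried out with a single fixed convention throughout.
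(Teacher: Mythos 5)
Your proposal is correct and takes essentially the same approach as the paper: the paper simply defines the mutually inverse map, sending $(g_1,\dots,g_n,\sigma)\in\Gamma^n\rtimes\Sym_n$ to the diagram of $\sigma$ with $i$-th strand marked by $g_i$, and verifies it is a bijective morphism. The crux is identical in both arguments, namely the stacking bookkeeping that the strand at top position $i$ continues into the strand of the lower diagram indexed by $\sigma^{-1}(i)$, so the total markings multiply as $g_i h_{\sigma^{-1}(i)}$, matching the semidirect product law.
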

\begin{proof}
Let $(g_1,\dots,g_n,\sigma)$ be an element of $\Gamma^n\rtimes\Sym_n$. Consider the map $f$ sending such an element to the permutation diagram associated to $\sigma$ and where the $i$-th strand is marked by $g_i$. 

We have $f\left((g_1,\dots,g_n,\sigma)(h_1,\dots,h_n,\tau)\right)=f(g_1h_{\sigma^{-1}(1)},\dots,g_nh_{\sigma^{-1}(n)},\sigma\tau)$. 

On the other hand, when stacking $f\left((g_1,\dots,g_n,\sigma)\right)$ and $f\left((h_1,\dots,h_n,\tau)\right)$ from bottom to top. The permutation associated to this diagram is $\sigma\tau$ by Proposition \ref{prop:diag_sym}. Moreover, the top diagrams has an $i$-th strand that is followed by the $\sigma^{-1}(i)$-th strand of the second diagram. Thus, the markings on the $i$-th strand of the diagram after stacking is $g_ih_{\sigma^{-1}(i)}$. From this, we deduce that $f$ is a morphism.

Now, $f\left((g_1,\dots,g_n,\sigma)\right)$ is trivial if and only if the diagram has trivial permutation and markings, so $\sigma=\text{id}$ and $g_1=\dots=g_n=1$. This means that $f$ is injective.

Finally, consider a diagram with associated permutation $\sigma$ and markings $g_1,\dots,g_n$. By the definition of $f$, the given diagram is equal to $f(g_1,\dots,g_n,\sigma)$, meaning that f is surjective. Thus $f$ is an isomorphism.
\end{proof}

We can finally arrive at a diagrammatical representation of structure groups and germs of solutions, which corresponds to the I-structure of \cite{etingof,istruct} and Theorem \ref{istruct}.
\begin{thm}
Let $S$ be a cycle set of size $n$ and class $d$. Then its structure group $G$ (resp. germ $\overline G_l$) is isomorphic to a subgroup of $\Z$-marked (resp. $\Z/ld\Z$-marked) $n$-strand permutation diagrams. Moreover, an element is uniquely determined by its marking as a diagram. 
\end{thm}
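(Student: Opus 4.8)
The plan is to read the embedding straight off the I-structure: Theorem~\ref{istruct} says that, as a brace, $G$ is $(\Z^n,+,\cdot)$, and the content of the statement is precisely that the semidirect-product description of the group of marked diagrams obtained in Proposition~\ref{prop:marked_sd} captures exactly this brace. Concretely, the bijective $1$-cocycle $\mathrm{cp}\colon G\to\Z^n$ is, via the brace structure, an isomorphism of additive groups $(G,+)\to(\Z^n,+)$ (equivalently $\Pi=\mathrm{cp}^{-1}$ satisfies $\Pi(g+h)=\Pi(g)+\Pi(h)$, because $a\lambda_a^{-1}(b)=a+b$, as used in Lemma~\ref{ybe}), and $\lambda\colon(G,\cdot)\to\mathrm{Aut}(G,+)$ is a group homomorphism whose image consists of permutations of the basis $S$, since $\lambda_g(s)\in S$ for all $g\in G$ and $s\in S$; thus $\lambda$ induces a homomorphism $\overline\lambda\colon G\to\Sym_n$.

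First I would define $\Phi\colon G\to\Z^n\rtimes\Sym_n$ by $\Phi(g)=(\mathrm{cp}(g),\overline\lambda_g)$ and check it is a group homomorphism. Writing $h=\sum_s h_s s$, the identity $gh=g+\lambda_g(h)$ of Proposition-Definition~\ref{lmap} together with the additivity of $\mathrm{cp}$ gives $\mathrm{cp}(gh)=\mathrm{cp}(g)+\mathrm{cp}(\lambda_g(h))$; and since $\lambda_g$ sends $s_i$ to $s_{\overline\lambda_g(i)}$, transporting it along $\mathrm{cp}$ turns it into the standard coordinate action of $\overline\lambda_g$ on $\Z^n$, so $\mathrm{cp}(gh)=\mathrm{cp}(g)+\overline\lambda_g\cdot\mathrm{cp}(h)$. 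Combined with $\overline\lambda_{gh}=\overline\lambda_g\,\overline\lambda_h$, this is exactly the multiplication law of $\Z^n\rtimes\Sym_n$ recorded in the proof of Proposition~\ref{prop:marked_sd}, so $\Phi$ is a homomorphism; it is injective because its first coordinate $\mathrm{cp}$ is a bijection. Composing $\Phi$ with the isomorphism of Proposition~\ref{prop:marked_sd} realizes $G$ as a subgroup of $\Z$-marked $n$-strand permutation diagrams, in which the marking attached to $g$ is the tuple $\mathrm{cp}(g)$; since $\mathrm{cp}$ is injective, $g$ is recovered from its marking, which is the last assertion.

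For the germ the same argument descends modulo $ld$. By Theorem~\ref{germ}, $\mathrm{cp}$ induces a bijection $\overline{\mathrm{cp}}\colon\overline G_l\to(\Z/ld\Z)^n$, and since $(ld)G\subseteq\mathrm{Soc}(G)=\ker\lambda$ the homomorphism $\overline\lambda$ factors through $\overline G_l$; so $\overline\Phi(\overline g)=(\overline{\mathrm{cp}}(\overline g),\overline\lambda_{\overline g})$ is a well-defined injective homomorphism $\overline G_l\to(\Z/ld\Z)^n\rtimes\Sym_n$ by the same computation, and Proposition~\ref{prop:marked_sd} applied with $\Gamma=\Z/ld\Z$ identifies its target with the group of $\Z/ld\Z$-marked $n$-strand permutation diagrams, again with the marking determining the element.

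Most of this is routine brace bookkeeping; the one point that genuinely needs care is the matching of conventions — verifying that transporting the automorphism $\lambda_g$ of $(G,+)$ along $\mathrm{cp}$ yields the \emph{same} left action of $\Sym_n$ on $\Z^n$ (and on $(\Z/ld\Z)^n$) that appears in the semidirect product of Proposition~\ref{prop:marked_sd}, rather than its inverse, and that $\overline\lambda$ really is well defined on $\overline G_l$ through the socle. Once these are pinned down, the homomorphism property of $\Phi$ is a one-line restatement of the $1$-cocycle relation, which is why this theorem is essentially the I-structure of \cite{etingof,istruct} phrased diagrammatically.
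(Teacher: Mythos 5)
Your proof is correct and follows essentially the same route as the paper: realize $G$ (resp.\ $\overline G_l$) inside $\Z^n\rtimes\Sym_n$ (resp.\ $(\Z/ld\Z)^n\rtimes\Sym_n$) via the I-structure of Theorem~\ref{istruct} and Theorem~\ref{germ}, then identify that semidirect product with marked diagrams by Proposition~\ref{prop:marked_sd}. You merely spell out explicitly (via $\Phi(g)=(\mathrm{cp}(g),\overline\lambda_g)$ and the cocycle identity $gh=g+\lambda_g(h)$) the embedding that the paper simply cites, which is a harmless elaboration.
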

\begin{proof}
By Theorem \ref{istruct}, we know that $G$ embeds as a subgroup of $\Z^n\rtimes\Sym_n$ such that restricting to the first coordinate is bijective. Theorem \ref{germ} gives a similar embedding of $\overline G_l$ in $(\Z/ld\Z)^n\rtimes\Sym_n$. In both cases, we then apply Proposition \ref{prop:marked_sd} to conclude.
\end{proof}
\begin{rmk}
A way to interpret the quotient $G\to\overline G_l$ through the diagram is to visualize the strands as having thickness in 3-dimensions, and consider the markings as twists. In $G$, a marking as $\bullet=+1\in\Z$ can be seen as a twists by $\frac{2\pi}{ld}$. Then, quotienting to $\overline G_l$ amounts to considering a full twist as trivial.
\end{rmk}

Now going back to the analogy with Artin--Tits group, where the focus to obtain the Iwahori--Hecke algebra was the permutation associated to a braid. Here the permutation of the braid is an obstacle when we only care about the number of circles/twists (the $\Gamma^n$ part). This is why we will consider deformations which only involves elements with trivial permutation. so in our case using $s^{[d]}$. For instance, the analogue of $s^2=(q-1)s+q$ will be ${s^{[d]}}^2=(q-1)s^{[d]}+q$ (where $(s^{[d]})^2=s^{[2d]}$). This means we will consider bigger germs, like here $\overline G_2=G/\langle s^{[2d]}\rangle$ to be able to obtain a Hecke algebra.

The visualization through marked permutation diagrams allows us to understand an important difference between the Garside structures of Artin--Tits groups and Structure groups of solutions to the Yang--Baxter equation. In particular, it yields the intuition on why the "correct" definition will involve elements with trivial permutation.
\emergencystretch=1em
\printbibliography
\end{document}